\newcommand{\AS}{ A$^s$MDS }
\newcommand{\AT}{ A$^t$MDS }
\newcommand{\F}{\mathbb{F}}
\newcommand{\K}{\mathcal{K}}
\newcommand{\G}{\mathcal{G}}
\theoremstyle{plain}
\newtheorem{theorem}{Theorem}[section]
\newtheorem{proposition}[theorem]{Proposition}
\newtheorem{lemma}[theorem]{Lemma}
\newtheorem{corollary}[theorem]{Corollary}
\newtheorem{conjecture}[theorem]{Conjecture}
\theoremstyle{definition}
\newtheorem{definition}[theorem]{Definition}
\newtheorem{example}[theorem]{Example}
\theoremstyle{remark}
\newtheorem{remark}[theorem]{Remark}
\begin{document}

	\title[Bounds on the length of codes of non-zero defect]{Projective systems and bounds on the length of codes of non-zero defect}

\author{Tim L. Alderson}
\address{Mathematics and Statistics, University of New Brunswick, Saint John, New Brunswick, E2L 4L5, Canada}
\email{tim@unb.ca}
\thanks{The first author acknowledges the support of the Natural Sciences and Engineering Research Council of Canada (NSERC), [funding reference number 2019-04103]. Cette recherche a \'{e}t\'{e} financ\'{e}e par le Conseil de recherches en sciences naturelles et en g\'{e}nie du Canada (CRSNG), [num\'{e}ro de r\'{e}f\'{e}rence 2019-04103].}

\author{Zhipeng Zhang}
\address{Mathematics and Statistics, University of New Brunswick, Saint John, New Brunswick, E2L 4L5, Canada}
\email{zhipeng.zhang@unb.ca}

\keywords{projective system, linear codes, Singleton defects, Near-MDS codes, Almost-MDS codes}
\subjclass[2020]{94B05, 94B65, 51E22, 51E21, 05B25}
\date{}

	\begin{abstract}
		We derive bounds on the lengths of linear codes with fixed Singleton defect $s$, working within the framework of projective systems as advocated by Tsfasman and Vl\v{a}du\c{t}. This geometric perspective allows us to unify and extend a range of existing results. We introduce the parameter $m^s(k,q)$, denoting the maximum length of a non-degenerate $[n,k,d]_q$ A$^s$MDS code, and more generally $m^s_t(k,q)$, where the dual code is additionally required to be A$^t$MDS. We also study $\kappa(s,q)$, the maximum dimension $k$ for which a length-maximal A$^s$MDS code exists. Among our main results, we provide sufficient conditions on $n$ and $k$ under which the dual of an A$^s$MDS code is necessarily A$^s$MDS, addressing a gap in the existing literature. We show that codes of sufficient length must be projective, meet the Griesmer bound, and be dual to an AMDS code. Our bounds subsume or improve several results in the literature. Two conjectures on the non-existence of length-maximal codes of dimension $k\ge 5$ are proposed, supported by computational evidence.
	\end{abstract}

	\maketitle

	\section{Introduction and preliminaries} \label{sec: Intro}

	This section establishes the notation and background used throughout. We begin by recalling the definitions of linear codes and projective systems, then introduce the central parameters $m^s(k,q)$, $m^s_t(k,q)$, and $\kappa(s,q)$, and survey existing bounds in the literature. Section~2 treats codes of small dimension, Section~3 reviews bounds from the theory of classical arcs, and Section~4 presents the main results. Section~5 concludes with open questions and conjectures.

	For $n\geq k$, a linear code $C$ of length $n$, dimension $k$, and minimum distance $d$, denoted an $[n,k,d]_q$ code, is a $k$-dimensional subspace of $\F_q^n$ such that the minimum (Hamming) distance between any two elements (codewords) of $C$  is $d$; that is to say, there exist two codewords agreeing in $n-d$ coordinates and no two codewords agree in as many as $n-d+1$ coordinates.  A linear code $C \subset \mathbb{F}_q^n$ is degenerate if $C \subseteq \mathbb{F}_q^{n-1} \subset \mathbb{F}_q^n$, where $\mathbb{F}_q^{n-1}$ is the subspace of vectors with $0$ at some fixed position, otherwise, we say that the code is non-degenerate. Unless otherwise stated, the codes considered here shall be non-degenerate. Given an \([n, k, d]_q\) code \( C \), the dual code \(C^\perp = \{ y \in \mathbb{F}_q^n \mid y\cdot x = 0 \text{ for all } x \in C \}\) is an $[n,k^\perp, d^\perp]_q$ code where $k^\perp = n-k$.

	Following Tsfasman and Vl\v{a}du\c{t} \cite{Tsfasman2007}, we represent linear codes as \textit{projective systems} --- finite multisets of points in a projective space --- thereby gaining a geometric vantage point from which to analyze code parameters. We now recall the necessary definitions.

	The projective space of $ k$ dimensions over $\F_q$ will be denoted by PG$(k, q)$. A projective subspace of dimension $m$ of PG$(k-1,q)$ is called an $m$-\textit{flat}; it is isomorphic to PG$(m,q)$ as an abstract projective space, though we retain the term flat to emphasize its embedding in the ambient space. We say that a set of $m$ points of PG$(k-1, q)$ are \textit{independent} if they are not contained in an $(m-2)$-flat. More generally, the quotient of $PG(k,q)$ by a $d$-flat $\Omega$ is isomorphic to PG$(k-d-1,q)$. The flats of the quotient space are precisely the flats of PG$(k,q)$ containing $\Omega$, (see e.g. \cite{Ball2015}).

	An $[n, k, d]_q$ code $C$ is determined by an associated generator matrix, $G$. Up to code equivalence, the columns  of  $G$ can be considered as an $n$-multiset $\G$ of points in $PG(k-1,q)$,  at most $n-d$ per hyperplane (subspace of co-dimension $1$). Any hyperplane meeting $\G$ in $n-d$ points is said to be a \textit{secant} of $\G$.   We may thus discuss codes in terms of equivalent objects known as \textit{projective systems} (see e.g. \cite{Tsfasman2007}).

	A projective $[n, k, d]_q$ system is a finite unordered family $\G$ of points  in $\Sigma =\text{PG}(k-1,q)$ that does not lie entirely in a  hyperplane  of $\Sigma$. Though generally, $\G$ may be a multiset, we shall employ a slight abuse of notation, writing  $\mathcal{G} \subseteq \Sigma$, and  $|\G|$ to denote the cardinality of $\G$ counting multiplicities. For a point $P$, we denote by $\mu(P)$ the multiplicity of $P$ in $\G$. The parameters $n, k,$ and $ d $ are defined as follows:
	\begin{equation}\label{eqn: Proj Sys Code parameters}
		n=|\G|, \quad k=\operatorname{dim} \Sigma +1, \quad n-d= \max \{|\G \cap H| \geq 1 \mid  H \text{ is a hyperplane of } \Sigma\}.
	\end{equation}

	Similarly, the set of parameters $[n,k^\perp,d^{\perp}]_q$ of a dual code $\G^\perp$ can  be characterized in terms of the projective system $\mathcal{G}$. For a projective system $\mathcal{G}$,
	\begin{equation}\label{eqn: Proj Sys dual distance}
		d^{\perp}=\min \{|\mathcal{Q}|: \mathcal{Q} \subset \G,|\mathcal{Q}|-\operatorname{dim} \operatorname{lin}\langle\mathcal{Q}\rangle=1\},
	\end{equation}
	where $\langle\mathcal{Q}\rangle$ is the span of $\mathcal{Q}$, and $\operatorname{dim} \operatorname{lin}\langle\mathcal{Q}\rangle$ is its linear dimension (which is greater by 1 than the projective dimension of $\langle\mathcal{Q}\rangle$ ). Equivalently, a set $\mathcal{Q}\subset\mathcal{G}$ is \textit{dependent} if $|\mathcal{Q}|-\operatorname{dim}\operatorname{lin}\langle\mathcal{Q}\rangle=1$, i.e.\ the points of $\mathcal{Q}$ fail to span a $\mathrm{PG}(|\mathcal{Q}|-1,q)$; thus $d^\perp$ is the minimum size of a dependent set of points of $\mathcal{G}$. Note that degenerate codes are degenerate projective systems, in that they contain  a point $0$, corresponding to the $0$-dimensional vector subspace $\langle 0 \rangle$. Though we generally avoid degenerate codes here, in such cases, the point $0$ is said to have projective dimension $-1$.  Notice that according to Eqn.~(\ref{eqn: Proj Sys dual distance}), a code $\G$ is degenerate if and only if $d^\perp=1$.

	Our working definition of a (non-degenerate) linear code is as follows.

	\begin{definition}\label{defn: linear Code by PS}
		A linear $[n,k,d]_q$ code $\G$ is an $n$-(multi)set of points in $\Sigma = $PG$(k-1,q)$ such that
		\[
		n-d= \max \{|\G \cap H| \geq 1 \mid  H \text{ is a hyperplane of } \Sigma\}.
		\]
		That is to say, $\G$ is an $[n,k,d]_q$ projective system.
	\end{definition}

	It follows immediately that  an $ [n,k,d ]_q$ code $\G$ satisfies $n-d\geq k-1$, which is the well known Singleton bound.  The \textit{Singleton defect} of an $[n,k,d]_q$ code is $S(\G)=n-k+1-d$.  An $ [n,k,d]_q$ code $\G$ with $S(\G)=0$ is called a maximum distance separable (MDS) code.  Codes of Singleton defect $1$ are called Almost-MDS (AMDS) codes \cite{MR1409442}, and more generally, codes with $S(\G) = s$ are denoted A$^s$MDS codes. The dual code of an MDS code is MDS, but generally speaking the dual of an A$^s$MDS code need not be A$^s$MDS.  If $S(\G)=S(\G^\perp)=1$, $\G$ is said to be dually-AMDS (or Near MDS, or NMDS). This notation extends naturally to dually A$^s$MDS (or N$^s$MDS) codes.

	A linear code  $\G$ is \textit{projective} if $\G$ is a set, that is to say if $P\in \G$ then $\mu(P)=1$. A non-projective code is therefore equivalent to a code with one or more repeated coordinates. As we shall see, codes of sufficient length are necessarily projective.

	Projective systems may be viewed as arcs or, sometimes as caps in PG$(k-1,q)$. An \textit{$(n,r)$-arc} $\K$ in $\Pi=PG(k-1,q)$, $k\leq r+1$, is a collection of  $n$ points such that each hyperplane of $\Pi$ is incident
	with at most $r$ points in $\K$, and some hyperplane is incident with $r$ points of $\K$. In the case where $\K$ is a multi-set, we shall use the term multi-arc. An $(n,k-1)$-arc in $PG(k-1,q)$ is an \textit{$n$-arc}.
	From the definitions, it is immediate that $(n,n-d)$-arcs in $PG(k-1,q)$ and  projective $[n,k,d]_q$-codes are equivalent objects, as are  $(n,n-d)$-multi-arcs in $PG(k-1,q)$ and $[n,k,d]_q$-codes. An $(n,r)$-arc in $PG(k-1,q)$ is said to be maximal if $n=(r-k+2)(q+1)+k-2$. Such arcs are complete, in that they are not contained in an $(n+1,r)$-arc. The (non)-existence question regarding maximal $(n,r)$-arcs in PG$(2,q)$ has largely been answered (see e.g.\cite{MR2158768} ). Here, for fixed $q$, and $s$ and with $r=k+s-1$, we  investigate the parameter $\kappa(s,q)$, representing the maximum value $k$ for which there exists a maximal $(n,r)$-arc in $PG(k-1,q)$.

	An $n$-cap in PG$(k-1,q)$ is an $n$-set of points such that no three are collinear. It follows immediately that for $k\ge 3$, a projective $[n,k,d]_q$ code, $\G$ is an $n$-cap in PG$(k-1,q)$ if and only if $d^\perp \ge 4$ (equivalently, $S(C^\perp)\le k-3$).

	A fundamental problem in coding theory is that of determining the maximum length of a code with $k$, $q$, and $S(\G)$ fixed. To this end, we make the following definition.

	\begin{definition}\label{def: m^s(k,q)}
		We shall denote by $m^{s}_t(k,q)$ the maximum length of a (non-degenerate) $[n,k,d]_q$ A$^s$MDS code $\G$ such that  $\G^\perp$ is an A$^t$MDS code. If $t$ is not restricted, we shall simply write $m^{s}(k,q)$. In keeping with the standard notation, in the special cases $s=0$, and $s=t=1$ these parameters shall be denoted $m(k,q)$, and $m'(k,q)$ respectively.
	\end{definition}

	Investigations into the bounds on $m^s_t(k,q)$ initially focused on codes of small defect. In particular, the case $s=0$ has received much attention by way of the Main Conjecture on MDS codes (see Section \ref{subsec: MDS Codes}). The focus of the current work is on codes of defect $s>0$.
	For $s=1$, De Boer \cite{MR1409442} and Dodunekov and Landjev \cite{MR1755411,MR1358272} provide the following.

	\begin{proposition}[\cite{MR1409442}, Thm. 8; \cite{MR1358272}, Prop. 6.2]  \label{prop: AMDS de Boer}
		If $q>3$ and $k\ge 3$ then $m^1(k,q)\le 2q+k-2$.
	\end{proposition}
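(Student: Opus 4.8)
The plan is to argue entirely in the geometric language of Definition~\ref{defn: linear Code by PS}. An A$^1$MDS code of dimension $k$ and length $n$ is precisely an $(n,k)$-arc $\G$ in $\Sigma=PG(k-1,q)$: an $n$-multiset meeting every hyperplane in at most $k$ points, with equality for some hyperplane. I would establish $n\le 2q+k-2$ by induction on $k$, with the substantive content concentrated in the base case $k=3$; the inductive step is a routine projection argument, preceded by a reduction to the \emph{projective} case.

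First I would dispose of multiplicities. Fix $P\in\G$ and project from $P$ onto the quotient $\Sigma/P\cong PG(k-2,q)$, obtaining a multiset $\G'$ of size $n-\mu(P)$. Each hyperplane of the quotient is the image of a hyperplane $H\ni P$ of $\Sigma$ and meets $\G'$ in $|\G\cap H|-\mu(P)\le k-\mu(P)$ points. Since a spanning system of projective dimension $k-2$ must, by the Singleton bound, have a hyperplane section of size at least $k-2$ (and $\G\setminus\{P\}$ spans the quotient unless $n\le k+2$, a trivial case), we obtain $k-\mu(P)\ge k-2$; thus every point of an A$^1$MDS code of dimension $k\ge 3$ has multiplicity at most $2$. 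Moreover, if some $\mu(P)=2$ then the maximal section of $\G'$ is exactly $k-2$, so $\G'$ is an MDS system of dimension $k-1$; iterating projection down to $PG(1,q)$ gives the elementary bound $|\G'|\le q+k-2$, whence $n\le q+k\le 2q+k-2$ for $q\ge 2$. So I may assume $\G$ is a genuine set.

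For the inductive step, let $k\ge 4$ and assume the bound in dimension $k-1$. Projecting the (now projective) $\G$ from any $P\in\G$ yields a multiset $\G'$ of size $n-1$ in $PG(k-2,q)$ whose hyperplane sections have size at most $k-1$ and (by the Singleton bound) at least $k-2$; hence $\G'$ is either A$^1$MDS or MDS of dimension $k-1$. In the former case the induction hypothesis gives $|\G'|\le 2q+(k-1)-2$, in the latter the elementary MDS bound gives $|\G'|\le q+k-2$; either way $n-1\le 2q+k-3$, i.e.\ $n\le 2q+k-2$.

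Everything therefore rests on the base case $k=3$: a set $\G$ of $n$ points in $PG(2,q)$ with at most $3$ points per line satisfies $n\le 2q+1$ when $q>3$. Counting the points of $\G$ on the $q+1$ lines through a fixed point of $\G$ shows that if $n\ge 2q+2$ then each point lies on at most one line that is not a $3$-secant; together with the elementary incidence count (which by itself yields only the weaker $n\le 2q+3$), this rigidity leaves only $n=2q+3$ and $n=2q+2$. The first makes $\G$ a degree-$3$ maximal arc (every line meeting it in $0$ or $3$ points), which does not exist for $q>3$; the second forces the bisecants to form a perfect matching on $\G$ and makes $\G^\perp$ again A$^1$MDS (so that $\G$ is near-MDS), a near-maximal configuration to be excluded by a finer incidence count. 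I expect this exclusion of the two extremal lengths to be the main obstacle, since it is exactly where the hypothesis $q>3$ enters and where elementary counting no longer suffices; that the hypothesis is indispensable is shown by $q=3$, where the complement of a line in $PG(2,3)$ is a $(9,3)$-arc, i.e.\ an A$^1$MDS $[9,3,6]_3$ code with $n=9>2q+1$.
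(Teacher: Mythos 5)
Your scaffolding is correct and is essentially the paper's own route: the reduction of multiplicities and the induction on $k$ via the quotient at a point of $\G$ are exactly how the paper proves $m^s(k,q)\le m^s(k-1,q)+1$ (Lemma~\ref{lem: bounds on length of AsMDS}, part~\ref{part: 2 bounds on length of AsMDS}), and the proposition is then obtained there as the $s=1$ case of Corollary~\ref{cor: from Barlotti}, part~\ref{part 2 cor: from Barlotti}. The difference is what happens at $k=3$. The paper does not prove the planar bound $m^1(3,q)\le 2q+1$ at all: it imports it from the literature (Lemma~\ref{lem: Barlotti 3d}, part~2, citing Barlotti \cite{MR0083141} and Ball--Blokhuis--Mazzocca \cite{MR1466573}). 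You, by contrast, undertake to prove it, and you correctly observe that this is where all the substantive content lies --- but that is precisely where your argument is incomplete.

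Concretely, your counting does reduce the base case to excluding $n=2q+3$ and $n=2q+2$, and the structural consequences you draw (a degree-$3$ maximal arc in the first case, a perfect matching of bisecants in the second) are right. But for $n=2q+3$, the non-existence of degree-$3$ maximal arcs for $q>3$ is \emph{not} elementary when $3\mid q$: for $q=3^h$, $h\ge 2$, it is exactly the Ball--Blokhuis--Mazzocca theorem \cite{MR1466573} (elementary counting only yields the necessary condition $3\mid q$, which handles the other $q$). And for $n=2q+2$ you never supply the promised ``finer incidence count''; the natural one fails to close the case: each point of $\G$ lies on exactly $q$ trisecants, so the number of trisecants is $nq/3=2q(q+1)/3$, which is an integer whenever $q\not\equiv 1\pmod 3$, so divisibility rules out only one residue class of $q$. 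Excluding $(2q+2,3)$-arcs for all $q>3$ is the hard content of the cited planar results, so as it stands your proof has a genuine gap at its declared crux; you would need either to supply that argument or, like the paper, to cite Lemma~\ref{lem: Barlotti 3d} for it.
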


	Recall the notation $m'(k,q):= m^1_1(k,q)$.
	\begin{theorem}[\cite{MR1755411}, Thm. 2.7] \label{thm: Dodunekov 2.7} Let $k\ge 2$.
		\begin{enumerate}
			\item  $m'(k,q) \le 2q+k$; \label{part: thm Dodunekov P1}
			\item $m'(2,q) = 2q+2$; \label{part: thm Dodunekov P2}
			\item $m'(k,q)\le m'(k-\alpha, q)+\alpha$  ; for every $\alpha$, $0\le \alpha \le k$; \label{part: thm Dodunekov P3}
			\item  $m'(k,q)=k+1 $ for $k>2q$; \label{part: thm Dodunekov P4}
			\item  $m'(2q,q)=2q+2$ for $q>3$. \label{part: thm Dodunekov P5}
			\item $m'(2q-1,q)=2q+1$ for $q>3$. \label{part: thm Dodunekov P6}
		\end{enumerate}
	\end{theorem}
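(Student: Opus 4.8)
The plan is to prove the two inequalities $m'(2q-1,q)\le 2q+1$ and $m'(2q-1,q)\ge 2q+1$ separately, the unifying device being passage to the dual code: for a near-MDS (that is, dually-A$^1$MDS) code the dual is again near-MDS, of the same length but dimension $n-k$ in place of $k$, and this lets us trade the awkward dimension $2q-1$ for a small one.

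For the lower bound I would simply combine two earlier parts of this theorem. Taking $k=2q$ and $\alpha=1$ in Part~\ref{part: thm Dodunekov P3} gives $m'(2q,q)\le m'(2q-1,q)+1$, and Part~\ref{part: thm Dodunekov P5} supplies the exact value $m'(2q,q)=2q+2$ (valid since $q>3$). Together these yield $2q+2\le m'(2q-1,q)+1$, i.e. $m'(2q-1,q)\ge 2q+1$. Alternatively, the bound is witnessed concretely by the dual of the $[2q+1,2,2q-1]_q$ code arising from the multiset in PG$(1,q)$ that takes each of the $q+1$ points twice and then deletes one copy; a short check via Eqn.~(\ref{eqn: Proj Sys dual distance}) shows this weight-configuration is near-MDS, hence so is its dual of dimension $2q-1$.

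For the upper bound, which carries the real content, I would argue by contradiction. Suppose there exists a (non-degenerate) NMDS $[n,2q-1,d]_q$ code $C$ with $n\ge 2q+2$. Then $C^\perp$ is again NMDS, in particular A$^1$MDS, it has the same length $n$, and its dimension is $k^\perp=n-(2q-1)=n-2q+1\ge 3$; moreover $d_C=n-k\ge 3$ forces $d^\perp$ of $C^\perp$ to exceed $1$, so $C^\perp$ is non-degenerate. Since $q>3$ and $k^\perp\ge 3$, Proposition~\ref{prop: AMDS de Boer} applies to $C^\perp$ and gives $n\le m^1(k^\perp,q)\le 2q+k^\perp-2=2q+(n-2q+1)-2=n-1$, a contradiction. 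Hence every NMDS code of dimension $2q-1$ has length at most $2q+1$, i.e. $m'(2q-1,q)\le 2q+1$, and combining with the lower bound yields the asserted equality.

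The crux is the dual-passage in the upper bound: the generic bound $2q+k$ of Part~\ref{part: thm Dodunekov P1} is hopelessly weak at $k=2q-1$, but after dualizing, the sharper A$^1$MDS bound $2q+k-2$ of de Boer \emph{does} bite, precisely because it loses two rather than nothing. The only point I expect to need care is verifying that the hypotheses of Proposition~\ref{prop: AMDS de Boer} (namely $q>3$ and dimension at least $3$) genuinely hold for $C^\perp$; this is exactly why $n\ge 2q+2$ — equivalently $k^\perp\ge 3$ — is the correct case to isolate, and it is what pins the threshold at $2q+1$ rather than a larger value.
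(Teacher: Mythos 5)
Your proposal addresses only part \ref{part: thm Dodunekov P6} of the statement: the theorem has six parts, and parts \ref{part: thm Dodunekov P1}--\ref{part: thm Dodunekov P5} are nowhere proven, so as a proof of the stated theorem it is incomplete. This is not merely a formality, because your primary lower-bound argument for part \ref{part: thm Dodunekov P6} invokes parts \ref{part: thm Dodunekov P3} and \ref{part: thm Dodunekov P5}, i.e.\ unproven clauses of the very theorem under consideration, making that branch circular. Your fallback construction does rescue the lower bound: the dual of the $[2q+1,2,2q-1]_q$ code coming from the multiset in PG$(1,q)$ with $q$ points of multiplicity $2$ and one of multiplicity $1$ is indeed a non-degenerate $[2q+1,2q-1,2]_q$ NMDS code, and the verification via Eqn.~(\ref{eqn: Proj Sys dual distance}) is correct. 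To finish the theorem you would still need the ingredients the paper supplies elsewhere: part \ref{part: thm Dodunekov P1} is subsumed by Lemma \ref{lem: long bound}, parts \ref{part: thm Dodunekov P2} and \ref{part: thm Dodunekov P4} follow from the two-dimensional analysis in Section \ref{sec: trivial codes} (Lemma \ref{lem: trivial bounds}), part \ref{part: thm Dodunekov P3} follows from the quotient/shortening argument of Lemma \ref{lem: bounds on length of AsMDS}, and part \ref{part: thm Dodunekov P5} needs its own argument.

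The part you did prove is proved correctly, and by essentially the same route the paper uses for parts \ref{part: thm Dodunekov P5} and \ref{part: thm Dodunekov P6} (the paper never reproves the cited theorem in one place, but recovers these two parts in Lemma \ref{lem: NMDS special k}): pass to the dual, whose dimension is $n-(2q-1)\ge 3$ once $n\ge 2q+2$ and which is non-degenerate since $d>1$, then apply the de Boer bound of Proposition \ref{prop: AMDS de Boer} to reach the contradiction $n\le n-1$. The paper phrases the same step as applying $m^1(3,q)\le 2q+1$ (part \ref{part 2 cor: from Barlotti} of Corollary \ref{cor: from Barlotti}) to the dual of a putative $[k+3,k,3]_q$ NMDS code; since de Boer's general-$k$ bound is obtained from the planar bound by shortening, the two arguments rest on the same fact, and yours has the minor advantage of handling all $n\ge 2q+2$ in one stroke rather than only $n=k+3$. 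Note finally that your argument run verbatim at $k=2q$ (an NMDS $[n,2q,d]_q$ code with $n\ge 2q+3$ has dual of dimension $n-2q\ge 3$, giving $n\le 2q+(n-2q)-2=n-2$, a contradiction, while the dual of the $[2q+2,2,2q]_q$ code gives the matching construction) proves part \ref{part: thm Dodunekov P5} as well; adding that paragraph would simultaneously repair the circular branch of your lower bound.
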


	Dodunekov and Landjev furthermore showed that an $(n,k,d)_q$ AMDS code with $n>q+k$ must in fact be NMDS. Written using our current notation, their result is as follows.
	\begin{theorem}[\cite{MR1358272}, Thm. 3.4] \label{thm: long AMDS is NMDS}
		For $k\ge3$, if $m^1_t(k,q)>q+k$, then $t=1$.
	\end{theorem}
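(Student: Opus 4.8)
The plan is to unwind the definition of $m^1_t(k,q)$ and reduce the statement to a purely geometric claim about the projective system. If $m^1_t(k,q) > q+k$ then there is a non-degenerate AMDS $[n,k,d]_q$ code $\G\subseteq\Sigma=\mathrm{PG}(k-1,q)$ with $k\ge 3$, $n>q+k$, and $\G^\perp$ an A$^t$MDS code; it suffices to force $t=1$ for every such $\G$. Since $\G$ is AMDS we have $S(\G)=n-k+1-d=1$, so $n-d=k$: every hyperplane of $\Sigma$ meets $\G$ in at most $k$ points. Using $k^\perp=n-k$, the dual defect is $t=S(\G^\perp)=k^\perp+1-d^\perp=k+1-d^\perp$, so $t=1$ is equivalent to $d^\perp=k$. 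As $\G$ is AMDS it is not MDS, hence $\G^\perp$ is not MDS and $t\ge1$, i.e. $d^\perp\le k$; thus it remains only to prove $d^\perp\ge k$. By Eqn.~(\ref{eqn: Proj Sys dual distance}), $d^\perp$ is the size of a smallest circuit of $\G$, so the goal is to show that every $k-1$ points of $\G$ are linearly independent.

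Suppose for contradiction that $\G$ admits a circuit $\mathcal{Q}$ with $e:=|\mathcal{Q}|=d^\perp\le k-1$; non-degeneracy gives $e\ge 2$, so this is possible only because $k\ge3$. Let $U=\langle\mathcal{Q}\rangle$, a flat of projective dimension $e-2$, and pass to the quotient $\Sigma/U\cong\mathrm{PG}(k-e,q)$. I would project the points of $\G$ lying off $U$ into this quotient, obtaining a system $\bar\G$ of $N=n-|\G\cap U|$ points which spans $\mathrm{PG}(k-e,q)$ (because $\G$ spans $\Sigma$, the points off $U$ together with $U$ still span). The counting step is where the AMDS hypothesis enters: every hyperplane of the quotient is the image of a hyperplane $H\supseteq U$ of $\Sigma$, and $H$ already contains the $\ge e$ points of $\G\cap U$; since $|\G\cap H|\le k$, each hyperplane of $\mathrm{PG}(k-e,q)$ contains at most $k-|\G\cap U|\le k-e$ points of $\bar\G$.

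Now $\bar\G$ is a projective system of dimension $k-e+1$, so the Singleton bound forces some hyperplane to contain at least $(k-e+1)-1=k-e$ of its points. Comparing with the previous inequality yields equality throughout: $|\G\cap U|=e$ exactly (so $U$ carries no points of $\G$ beyond $\mathcal{Q}$), and $\bar\G$ is an MDS system, i.e. an $N$-arc in $\mathrm{PG}(k-e,q)$ with $N=n-e$. Finally I would invoke the elementary arc bound, proved by repeatedly projecting an arc from one of its points: an arc in $\mathrm{PG}(m,q)$ has at most $q+m$ points (the base case $m=1$ being the $q+1$ points of a line). With $m=k-e$ this gives $N\le q+(k-e)$, so $n=N+e\le q+k$, contradicting $n>q+k$. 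Hence no such circuit exists, $d^\perp\ge k$, and $t=1$.

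I expect the main obstacle to be the bookkeeping in the counting step: checking that the bound ``at most $k$ points per hyperplane of $\Sigma$'' transfers cleanly to ``at most $k-e$ points per hyperplane of the quotient,'' and then extracting from the forced equality both that $\G\cap U=\mathcal{Q}$ and that $\bar\G$ is genuinely MDS. The latter is what makes $\bar\G$ a repetition-free arc (a multiset MDS system of dimension $\ge 2$ cannot repeat a point), and this is precisely where the constraint $k-e+1\ge2$, i.e. $e\le k-1$, is used. Once $\bar\G$ is identified as an arc, the length bound delivers the contradiction immediately.
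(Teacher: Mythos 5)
Your proof is correct and follows essentially the same route as the paper's own derivation (Lemma~\ref{lem: bounds based on d perp} specialized through Corollary~\ref{cor: bound s = 1 t>1}): both arguments quotient by the span of a minimal circuit of size $d^\perp=k+1-t$, deduce that the resulting dimension-$t$ system is MDS, and finish with the trivial arc/MDS length bound $m(t,q)\le q+t-1$, giving $n\le (q+t-1)+(k+1-t)=q+k$. The only real difference is organizational: the paper factors the middle step through its defect-bookkeeping machinery (Proposition~\ref{prop: quotient properties}, Theorem~\ref{thm-Projective}(\ref{part: 3 thm proj}), Lemma~\ref{lem: bounds on length of AsMDS}), whereas you obtain MDS-ness of the quotient, and the fact that $|\G\cap U|=d^\perp$ exactly, in one stroke by forcing equality in the Singleton bound.
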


	In \cite{Tong2012}, Tong discusses the case $s=2$, establishing the following bound in the binary case.

	\begin{proposition}[\cite{Tong2012}, Cor. 2]  \label{prop: Tong binary}
		If $k\ge 3$ then $m^2(k,2)\le k+5$.
	\end{proposition}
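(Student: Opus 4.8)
The plan is to convert the Singleton-defect hypothesis into a statement about the minimum distance and then invoke the (binary) Griesmer bound. Since $\G$ is A$^2$MDS we have $S(\G)=n-k+1-d=2$, i.e. $n=k+d+1$. Hence the asserted inequality $n\le k+5$ is \emph{equivalent} to the claim that every binary A$^2$MDS code of dimension $k\ge 3$ satisfies $d\le 4$, and it suffices to rule out $d\ge 5$.

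For a binary $[n,k,d]_2$ code the Griesmer bound gives $n\ge\sum_{i=0}^{k-1}\lceil d/2^i\rceil$. Isolating the first three summands and bounding each of the remaining $k-3$ summands below by $1$,
\[
\sum_{i=0}^{k-1}\left\lceil \frac{d}{2^i}\right\rceil \;\ge\; d+\left\lceil \frac{d}{2}\right\rceil+\left\lceil \frac{d}{4}\right\rceil+(k-3).
\]
If $d\ge 5$ then $\lceil d/2\rceil\ge 3$ and $\lceil d/4\rceil\ge 2$, so the right-hand side is at least $d+k+2$. This forces $n\ge d+k+2>d+k+1=n$, a contradiction; therefore $d\le 4$ and $n=k+d+1\le k+5$. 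The only place needing a moment's care is the boundary dimension $k=3$, where the Griesmer sum has an empty tail ($i$ ranging over $\{3,\dots,k-1\}=\varnothing$): there one checks directly that the three leading terms already give $d+\lceil d/2\rceil+\lceil d/4\rceil\ge d+5>d+4=k+1+d$. I expect no genuine obstacle in this route — the proof is essentially immediate once the defect hypothesis is rewritten as $n=k+d+1$.

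I would also note that the bound can be obtained entirely within the projective-systems framework of this paper, which may be the preferred presentation: since the maximal hyperplane intersection $n-d=k+1$ is attained, a secant hyperplane exists, and shortening $\G$ at a point $P$ on it (projection from $P$) yields a non-degenerate A$^2$MDS code of dimension $k-1$ and length $n-1$, giving the recursion $m^2(k,2)\le m^2(k-1,2)+1$ and hence the result by induction from a base case at $k=3$ (e.g. $m^2(3,2)\le 8$ via a direct analysis in the Fano plane). The hard part on this geometric route is the treatment of repeated points: if $\mu(P)\ge 2$ the projection instead \emph{lowers} the defect, so non-projective codes must be handled separately. The Griesmer argument sidesteps this difficulty completely, which is why I would present it first.
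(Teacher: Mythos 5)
Your Griesmer argument is correct and complete: rewriting the defect condition as $n=k+d+1$ and comparing with $n\ge d+\lceil d/2\rceil+\lceil d/4\rceil+(k-3)$ does rule out $d\ge 5$, and your separate check at $k=3$ is sound (in fact no case distinction is needed, since the tail terms are only ever bounded below by the trivial estimate). However, this is a genuinely different route from the paper's. The paper does not prove the proposition where it is stated (it is quoted from Tong); its own derivation comes later, as the special case $\alpha=1$, $s=q=2$ of Corollary~\ref{cor: big s short code}, which gives $m^s(k,q)\le s(q+1)+k-1$ whenever $s\ge q$, and whose proof is geometric: Lemma~\ref{lem: fat k-3 flat short code} counts points of $\G$ over the pencil of $q+1$ hyperplanes through a $(k-3)$-flat rich in points of $\G$, and the corollary's proof shows such a flat must exist (for $k=3$, a secant line of a long code must carry a point of high multiplicity). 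What the paper's route buys is uniformity in $q$, $s$, and $\alpha$ inside its projective-systems framework; what yours buys is brevity, independence from that machinery, and automatic coverage of degenerate codes. Your method is also less specialized than you suggest: keeping the three-term sum $d+\lceil d/q\rceil+\lceil d/q^2\rceil+(k-3)$ for general $q$ recovers the full strength of Corollary~\ref{cor: big s short code}, since $s\ge\alpha(q+1)-1$ together with $d>q(s+1-\alpha)$ forces $\lceil d/q\rceil+\lceil d/q^2\rceil\ge s+3>s+2$, contradicting Griesmer. Finally, your secondary geometric sketch is essentially the paper's Lemma~\ref{lem: bounds on length of AsMDS}(\ref{part: 2 bounds on length of AsMDS}); the repeated-point difficulty you flag is resolved there by monotonicity in the defect (part~\ref{part: 1 bounds on length of AsMDS} of that lemma), because projecting from a point of multiplicity $m$ yields an A$^{s-m+1}$MDS code of length $n-m$, which suffices for the recursion.
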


	Faldum and Willems \cite{MR1432708} opened discussions to codes of arbitrary defect, providing bounds on $m^s(k,q)$. In particular, they provided the following result which subsumes part \ref{part: thm Dodunekov P1} of Theorem \ref{thm: Dodunekov 2.7}, and Theorem \ref{thm: long AMDS is NMDS}.

	\begin{lemma}[\cite{MR1432708}, Lem. 2] \label{lem: long bound} \leavevmode \vspace*{-2mm}
		\begin{enumerate}
			\item If $k\ge 2$, then $m^s(k,q)\le (s+1)(q+1)+k-2$. \label{part: lem long bound P1}
			\item If $k\ge 3$, and $m^s(k,q) = (s+1)(q+1)+k-2$ then $s\le q-1$. \label{part: lem long bound P2}
		\end{enumerate}
	\end{lemma}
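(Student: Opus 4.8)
The plan is to pass to the projective-system picture and argue by induction on the dimension $k$. Writing $r=n-d=k-1+s$, an A$^{s}$MDS code $\G$ of length $n$ is exactly a (multi)set of $n$ points spanning $\Sigma=\,$PG$(k-1,q)$ in which every hyperplane carries at most $r$ points, and some hyperplane carries exactly $r$. Thus Part~\ref{part: lem long bound P1} asserts that such a configuration has at most $(s+1)(q+1)+k-2$ points, and I would prove this for all $s\ge 0$ at once, by induction on $k$. The base case $k=2$ is immediate: in PG$(1,q)$ a hyperplane is a single point, so the hypothesis says each of the $q+1$ points of PG$(1,q)$ occurs with multiplicity at most $r=s+1$, whence $n\le (s+1)(q+1)=(s+1)(q+1)+k-2$.

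For the inductive step (so $k\ge 3$) I would fix an arbitrary $P\in\G$, set $\mu=\mu(P)$, and project from $P$ onto the quotient PG$(k-2,q)$, obtaining a multiset $\G'$ with $|\G'|=n-\mu$. The key dictionary is that a hyperplane $H'$ of the quotient pulls back to a hyperplane $H\ni P$ of $\Sigma$ with $\abs{\G\cap H}=\mu+\abs{\G'\cap H'}$; since $\abs{\G\cap H}\le r$ this yields $\abs{\G'\cap H'}\le r-\mu$ for every $H'$. Because $\G$ spans $\Sigma$, its image $\G'$ spans PG$(k-2,q)$, so some $k-2$ of its points are independent and span a hyperplane of the quotient, forcing the maximal hyperplane intersection $r'$ of $\G'$ to satisfy $r'\ge k-2$. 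Comparing $k-2\le r'\le r-\mu$ simultaneously caps $\mu\le s+1$ and shows $\G'$ is an A$^{s'}$MDS code with $0\le s'=r'-(k-2)\le s+1-\mu$. Applying the induction hypothesis to $\G'$ in dimension $k-1$ and adding back the $\mu$ copies of $P$ gives
\[
n\le \mu+(s'+1)(q+1)+(k-1)-2\le (s+1)(q+1)+k-2+q(1-\mu),
\]
and since $\mu\ge 1$ the correction term $q(1-\mu)$ is nonpositive, which proves Part~\ref{part: lem long bound P1}.

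For Part~\ref{part: lem long bound P2} I would examine when this chain is tight. Equality in the display forces $q(1-\mu)=0$, hence $\mu(P)=1$; as $P$ was arbitrary, a length-maximal code must be a genuine set (projective), and moreover each projection $\G'$ again attains equality in dimension $k-1$ with the \emph{same} defect $s$. Iterating this rigidity, a length-maximal A$^{s}$MDS code in PG$(k-1,q)$ with $k\ge 3$ descends, projection by projection, to a length-maximal projective A$^{s}$MDS code of the same defect in PG$(2,q)$. There a secant hyperplane is an ordinary line carrying $r=s+2$ \emph{distinct} points; since a line of PG$(2,q)$ has only $q+1$ points, $s+2\le q+1$, that is $s\le q-1$.

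The main obstacle is the bookkeeping in the inductive step: controlling $\mu(P)$ and guaranteeing that the projected system has nonnegative defect, so that the induction hypothesis genuinely applies. Both are dispatched by the single observation that $\G'$ spans the quotient, which at once gives $\mu\le s+1$ and $s'\ge 0$; after that, the optimization over $\mu$ and the propagation of equality are routine. The secondary point needing care is that projection preserves the spanning property and can only decrease the defect (never producing a code beating the Singleton bound), as this is precisely what legitimizes the descent to PG$(2,q)$ in Part~\ref{part: lem long bound P2}.
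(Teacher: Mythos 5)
Your proof is correct, and it takes a genuinely different route from the one the paper relies on. The paper quotes this lemma from Faldum--Willems and re-derives it with its own machinery: part \ref{part: lem long bound P1} is recovered as part \ref{part 1 cor: from Barlotti} of Corollary \ref{cor: from Barlotti}, by iterating the one-point quotient step (part \ref{part: 2 bounds on length of AsMDS} of Lemma \ref{lem: bounds on length of AsMDS}) only down to dimension $3$ and then invoking Barlotti's planar bound $m^s(3,q)\le (s+1)(q+1)+1$ from Lemma \ref{lem: Barlotti 3d}; part \ref{part: lem long bound P2} follows instead from Corollary \ref{cor: big s short code} with $\alpha=1$ (if $s\ge q$ then $n\le s(q+1)+k-1$), whose proof is a pencil count through a $(k-3)$-flat (Lemma \ref{lem: fat k-3 flat short code}). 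You use the same descent mechanism---quotient at a point, with the correct bookkeeping $k-2\le r'\le r-\mu$, hence $\mu\le s+1$ and $0\le s'\le s+1-\mu$---but you push the induction all the way down to PG$(1,q)$, where the bound is a trivial multiplicity count; this makes part \ref{part: lem long bound P1} entirely self-contained, with no appeal to planar arc theory. For part \ref{part: lem long bound P2} you replace the counting argument by a rigidity argument: tightness of your displayed chain forces $\mu(P)=1$ at every point and forces each quotient to be again length-maximal of the same defect $s$, so the code descends to a length-maximal \emph{projective} plane code, where $s+2\le q+1$ is immediate because a secant line has only $q+1$ points. As to what each approach buys: yours is elementary and yields, as a by-product, that length-maximal codes are projective and remain length-maximal under every one-point quotient (a structural fact the paper obtains separately, cf. Theorem \ref{thm-Projective}); the paper's reduction to the plane, by contrast, can import the sharper known planar bounds (Lemmas \ref{lem: Barlotti 3d} and \ref{lem: s-arcs in the plane}) to get strictly stronger conclusions such as part \ref{part 2 cor: from Barlotti} of Corollary \ref{cor: from Barlotti}, and the fat-flat count of Lemma \ref{lem: fat k-3 flat short code} gives the more general bound of Corollary \ref{cor: big s short code} for every $\alpha$, not only $\alpha=1$.
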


	If a code achieves the bound in Lemma \ref{lem: long bound} then it is said to be \textit{length-maximal}. Faldum and Willems go on to show a code of sufficient length must be dual to an AMDS code.
	\begin{theorem}[\cite{MR1432708}, Thm. 7] \label{long AsMDS dual to AMDS}
		If $s\ge 1$ and $k\ge 2$, and $m^s_t(k,q)>s(q+1)+k-1$, then $t\le 1$.
	\end{theorem}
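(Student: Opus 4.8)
The plan is to argue by contradiction, translating everything into the language of projective systems. Write $n = m^s_t(k,q)$ and let $\G$ be a non-degenerate A$^s$MDS code of this length whose dual $\G^\perp$ is an A$^t$MDS code. This encodes two facts: every hyperplane of $\Sigma = \mathrm{PG}(k-1,q)$ meets $\G$ in at most $n-d = k-1+s$ points, and $t = S(\G^\perp) = k+1-d^\perp$, so that $t\ge 2$ is equivalent to $d^\perp \le k-1$. Assuming $t\ge 2$ for contradiction, the dual-distance characterization \eqref{eqn: Proj Sys dual distance} supplies a subset $\mathcal{Q}\subseteq\G$ with $|\mathcal{Q}| = d^\perp =: m \le k-1$ and $\operatorname{dim}\operatorname{lin}\langle\mathcal{Q}\rangle = m-1$, i.e. $m$ points spanning an $(m-2)$-flat $\Pi$. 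Non-degeneracy gives $d^\perp\ge 2$, so $m\ge 2$ and $\Pi$ is a genuine flat.

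The heart of the argument is a projection from $\Pi$. Set $a = |\G\cap\Pi|$ and $b = n-a$; since $\mathcal{Q}\subseteq\G\cap\Pi$ we have $a\ge m$, and since $\dim\Pi = m-2 < k-1$ the set $\G$ cannot lie entirely in $\Pi$, so $b>0$. Projecting $\G\setminus\Pi$ into the quotient $\overline{\Sigma} = \Sigma/\Pi \cong \mathrm{PG}(k-m,q)$ yields a multiset $\overline{\G}$ of size $b$; because $\mathcal{Q}$ already spans $\Pi$, the image $\overline{\G}$ spans $\overline{\Sigma}$, hence is a non-degenerate projective system of dimension $\kappa = k-m+1\ge 2$. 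The key incidence transfer uses the quotient fact quoted in Section~\ref{sec: Intro}: hyperplanes of $\overline{\Sigma}$ correspond bijectively to hyperplanes $H$ of $\Sigma$ with $H\supseteq\Pi$, and such an $H$ meets $\G$ in the $a$ points of $\G\cap\Pi$ together with the preimage (with multiplicity) of $\overline{\G}\cap\overline{H}$. Thus $|\overline{\G}\cap\overline{H}| = |H\cap\G| - a \le (k-1+s) - a$ for every hyperplane $\overline{H}$, so $\overline{\G}$ is an A$^{s'}$MDS code with $s' \le (k-1+s-a)-\kappa+1 = s-1+(m-a) \le s-1$.

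I then apply part~\ref{part: lem long bound P1} of Lemma~\ref{lem: long bound} to $\overline{\G}$: using monotonicity of the bound in the defect together with $s'\le s-1+(m-a)$, one gets $b \le (s'+1)(q+1)+\kappa-2 \le (s+m-a)(q+1)+k-m-1$. Adding back the $a$ points inside $\Pi$ gives $n = a+b \le a+(s+m-a)(q+1)+k-m-1$, and a short expansion shows the right-hand side equals $s(q+1)+k-1-q(a-m)$. Since $q>0$ and $a\ge m$, this forces $n \le s(q+1)+k-1$, contradicting the hypothesis $n > s(q+1)+k-1$. Hence $t\le 1$.

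The steps needing the most care — and where the real work lies — are the incidence transfer through the quotient and the legitimacy of invoking Lemma~\ref{lem: long bound}. One must verify that every hyperplane of $\overline{\Sigma}$ lifts to a hyperplane of $\Sigma$ containing $\Pi$, so that the fixed contribution $a$ is subtracted uniformly from the same bound $k-1+s$; that $\overline{\G}$ genuinely spans $\overline{\Sigma}$ and is non-degenerate so it is an honest A$^{s'}$MDS code with $\kappa\ge 2$; and that working only with the inequality $s'\le s-1+(m-a)$ rather than an exact defect still feeds the lemma correctly, which it does since the length bound is increasing in the defect parameter. The multiset bookkeeping for $\G$ and $\overline{\G}$ is routine but must be tracked so that $a+b=n$ holds exactly.
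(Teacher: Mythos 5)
Your proof is correct: the dual-distance characterization correctly produces the minimal dependent set $\mathcal{Q}$, the incidence transfer through the quotient ($|\overline{\G}\cap\overline{H}| = |H\cap\G|-a$ for hyperplanes $H\supseteq\Pi$) is valid, the defect estimate $s'\le s-1+(m-a)\le s-1$ is right, and the final expansion $n\le s(q+1)+k-1-q(a-m)$ checks out. However, your route differs from the paper's in a meaningful way. The paper states this theorem as a citation of Faldum--Willems, but effectively re-proves and generalizes it as part \ref{part: thm: ub ASMDS 1a } of Theorem \ref{thm: ub ASMDS}, whose proof goes through Lemma \ref{lem: bounds based on d perp}: there one quotients not by the full span $\Pi$ of the $d^\perp$ dependent points, but by a $(d^\perp-3)$-flat spanned by only $d^\perp-2$ of them, so that the quotient code retains a point of multiplicity at least $2$; one then invokes the non-projectivity bound (part \ref{part: 3 thm proj} of Theorem \ref{thm-Projective}) to get $m^s_t(k,q)\le m^{s-1}(t,q)+k-t+1$, and finally chains down to dimension $2$ via part \ref{part: 3 bounds on length of AsMDS} of Lemma \ref{lem: bounds on length of AsMDS} and the exact value $m^{s-1}(2,q)=s(q+1)$. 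You instead collapse the whole dependent set at once, track the resulting drop in defect directly, and feed the quotient code into the generic bound of Lemma \ref{lem: long bound}(\ref{part: lem long bound P1}). Each approach buys something: the paper's intermediate bound $m^{s-1}(t,q)+k-t+1$ is sharper and reusable (it drives Corollary \ref{cor: bound s = 1 t>1} and the MDS-conjecture refinements of Remark \ref{rem: MDS Conj and AMDS t>1}), whereas your argument is more self-contained, avoiding the projectivity machinery entirely, and yields the quantitative refinement $n\le s(q+1)+k-1-q\bigl(|\G\cap\Pi|-d^\perp\bigr)$, showing that any extra points of $\G$ on the span of a minimal dependent set force the code to be strictly shorter. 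One cosmetic remark: when $k=2$, the inequalities $2\le m\le k-1$ you derive are already contradictory, so the projection step is vacuous there; it would be worth saying explicitly that this case terminates before the quotient is constructed.
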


	\begin{corollary}[\cite{MR1432708}, Lem. 2]\label{cor: FW-k-bounds} Let $\G$ be an $[n,k,d]_q$ A$^s$MDS code with $k\ge 2$ and $n>s(q+1)+k-1$.
		\begin{enumerate}
			\item If $s=1$, then $k\le 2q$;
			\item If $s>1$, then $k\le q$, and consequently (Lemma \ref{lem: long bound}) $n\le (q+1)(s+2)-3$.
		\end{enumerate}
	\end{corollary}

	In the sequel we shall establish bounds on $m^{s}_t(k,q)$ which subsume and generalize the existing bounds. We are also interested in the existence question of length-maximal codes. Our work  leads us to conjecture that such codes do not exist for code dimension $k>4$.

	Let us briefly describe code shortening, a technique that shall be employed in the sequel.

	\subsection*{Shortening codes via quotient geometries}

	Code shortening is a common technique employed in coding theory (see e.g. constructions Y1--Y4, p. 592 in \cite{MR0465509}). We shall employ shortening in some of our proofs. Within the framework of projective systems, shortening can be performed by way of quotients, described as follows.\\
	Suppose $\G$ is an $[n,k,d]_q$ \AS code in $\Sigma=\text{PG}(k-1,q)$, let $\Lambda$ be an $\ell$-flat (i.e.\ an $\ell$-dimensional projective subspace) of codimension $r\ge 2$, and let $\alpha=|\Lambda \cap \G|$ (including multiplicities).  Let $\Sigma^*$ be the quotient geometry taken at $\Lambda$, so that in particular, $\Sigma^*\cong \text{PG}(r-1,q)$.  Each point $P$ of $\Sigma^*$ is the image of an $(\ell+1)$-flat, $\Omega_P$, where $\Lambda \subseteq \Omega_P$. If for each such point $P$, we assign the multiplicity $\mu(P) = |\Omega_P\cap \G|-\alpha$ then we obtain an $[n-\alpha, r, d^*]_q$ A$^{s^*}$MDS code $\G^*$, where $n-\alpha-d^*\le n-d-\alpha$, giving $d^*\ge d$, and
	\begin{equation}
		s^*=n-\alpha -r+1-d^* = n-\alpha +\ell-k+2-d^* \le n-\alpha +\ell-k+2-d =s-\alpha +\ell+1.
	\end{equation}

	Consequently, we have the following.

	\begin{proposition} \label{prop: quotient properties}
		Let $\G$ and $\G^*$ be as above.

		\begin{enumerate}
			\item If $\alpha = |\Lambda \cap \G|\ge \ell+1$, then $s^*\le s$, and $n \le m^{s^*}(r,q)+\alpha$.
			\item  $d^*=d$ if and only if $\Lambda$ is contained in at least one secant of $\G$, and $s^*=s$ if and only if $d=d^*$, and $\alpha = \ell+1$. \\ In particular, if $\Lambda$ is a point of $\G$ (i.e.\ $\ell=0$ and $\Lambda \in \mathcal{G}$), then $d=d^*$ and $s^*=s-\mu(\Lambda)+1$.
		\end{enumerate}
	\end{proposition}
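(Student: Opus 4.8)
The plan is to derive everything from the bijection between the hyperplanes of the quotient $\Sigma^*$ and the hyperplanes of $\Sigma$ through $\Lambda$, and then to track how the code parameters transform. First I would fix a hyperplane $H^*$ of $\Sigma^*$; by the description of the quotient recalled in Section~\ref{sec: Intro}, $H^*$ is the image of a unique hyperplane $H$ of $\Sigma$ with $\Lambda \subseteq H$, and the points $P \in H^*$ are exactly the images of the $(\ell+1)$-flats $\Omega_P \subseteq H$ through $\Lambda$. Since these flats partition $H \setminus \Lambda$, summing the assigned multiplicities gives
\begin{equation*}
|H^* \cap \G^*| = \sum_{P \in H^*}\bigl(|\Omega_P \cap \G| - \alpha\bigr) = |H \cap \G| - \alpha .
\end{equation*}
Because $\G$ spans $\Sigma$ and $\Lambda$ is a proper flat, the images $\G^*$ of $\G \setminus \Lambda$ under the quotient projection span $\Sigma^*$, so $\G^*$ is a non-degenerate projective system with $|\G^*| = n-\alpha$ in $\text{PG}(r-1,q)$. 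Maximising over $H^*$, equivalently over hyperplanes $H \supseteq \Lambda$, then yields $d^* = n - \max_{\Lambda \subseteq H}|H \cap \G|$.

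From this identity both halves of the first bullet of part~2 follow at once: restricting the maximum to hyperplanes through $\Lambda$ can only decrease it, so $d^* \geq d$, with equality precisely when some hyperplane through $\Lambda$ already attains $\max_H |H \cap \G| = n-d$, i.e. when $\Lambda$ lies on a secant of $\G$. Substituting this $d^*$ into $s^* = (n-\alpha)-r+1-d^*$ and using $r = k-1-\ell$ gives the master identity
\begin{equation*}
s^* - s = (d - d^*) + (\ell + 1 - \alpha),
\end{equation*}
which drives the rest.

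For part~1 I would note that the hypothesis $\alpha \geq \ell+1$ makes the second bracket $\leq 0$, while $d^* \geq d$ makes the first bracket $\leq 0$, so $s^* \leq s$. The length bound is then immediate: $\G^*$ is a non-degenerate A$^{s^*}$MDS code of dimension $r$, so by Definition~\ref{def: m^s(k,q)} its length obeys $n-\alpha = |\G^*| \leq m^{s^*}(r,q)$, that is, $n \leq m^{s^*}(r,q)+\alpha$. For the equality characterisation in part~2, in the regime $\alpha \geq \ell+1$ both brackets in the master identity are nonpositive, so their sum vanishes if and only if each does; hence $s^* = s \iff d=d^* \text{ and } \alpha = \ell+1$. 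In the special case $\ell=0$, $\Lambda \in \G$ one has $\alpha = \mu(\Lambda) \geq 1 = \ell+1$, and once $d=d^*$ the master identity collapses to $s^* = s + (1-\mu(\Lambda)) = s - \mu(\Lambda)+1$.

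I expect the main obstacle to be twofold. The genuinely load-bearing step is the quotient correspondence above: one must handle multiplicities correctly and justify the uniform $-\alpha$ shift from the partition of $H\setminus\Lambda$ into the flats $\Omega_P$, after which the parameter bookkeeping is mechanical. The second, more delicate point is the equality $d=d^*$ asserted in the special case: by the first bullet this is equivalent to the point $\Lambda$ lying on a secant of $\G$, which is a condition that must be secured (for example it holds whenever the chosen point of $\G$ is met by a maximal hyperplane) rather than granted automatically, since a point of $\G$ need not in general lie on any secant. The non-degeneracy of $\G^*$ and the verification that $|\G^*| = n-\alpha$ are routine by comparison.
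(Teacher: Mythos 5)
Your proof is correct and is essentially the paper's own argument: the paper gives no separate proof of this proposition, but derives it ``consequently'' from the computation immediately preceding it --- the correspondence between hyperplanes of $\Sigma^*$ and hyperplanes of $\Sigma$ containing $\Lambda$, which gives $d^*\ge d$, together with the defect bookkeeping $s^* = n-\alpha+\ell-k+2-d^*$ --- and this is precisely your master identity $s^*-s=(d-d^*)+(\ell+1-\alpha)$ combined with your sign analysis.

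The reservation you raise about the final ``in particular'' clause is not a weakness of your write-up but a genuine flaw in the statement itself: $\ell=0$ and $\Lambda\in\G$ do \emph{not} force $d=d^*$. As your first bullet shows, that equality is equivalent to a secant of $\G$ passing through $\Lambda$, and a point of $\G$ need not lie on any secant. Concretely, for $q$ odd let $\G\subset \mathrm{PG}(2,q)$ consist of a conic together with one point $P\neq Q$ of the tangent line to the conic at a conic point $Q$. Then $\G$ is a $[q+2,3,q-1]_q$ AMDS code whose secants are exactly the lines through $P$ meeting the conic twice, and none of these contains $Q$, since the unique line through $P$ and $Q$ is the tangent at $Q$. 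Quotienting at $\Lambda=Q$ yields the full line with every multiplicity equal to $1$, i.e.\ a $[q+1,2,q]_q$ MDS code, so $d^*=d+1$ and $s^*=0\neq 1=s-\mu(\Lambda)+1$. What holds in general is only the inequality $s^*\le s-\mu(\Lambda)+1$ (equivalently $d^*\ge d$), with equality exactly when $\Lambda$ lies on a secant. The paper's later uses of this clause survive: in the proof of Lemma \ref{lem: bounds on length of AsMDS} the quotient point is explicitly chosen on a secant, and in the proof of part \ref{part: 3 thm proj} of Theorem \ref{thm-Projective} only the inequality is needed, since $s'\le s-m+1$ together with the monotonicity $m^{s'}(k-1,q)\le m^{s-m+1}(k-1,q)$ still gives the stated bound. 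So your proof establishes everything that is actually true in the proposition, and your flagged ``delicate point'' is exactly where the paper's claim needs the additional hypothesis that $\Lambda$ lies on a secant.
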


	\begin{remark}\label{rem: shortening is algebraic}
		The quotient construction is the geometric form of classical code shortening, and admits a purely algebraic description. When $\Lambda$ is spanned by a set $T=\{i_1,\dots,i_{\ell+1}\}$ of (independent) coordinates of $\G$, the resulting code $\G^*$ is precisely the iterated shortening of $C$ at the coordinates in $T$, namely $C_T=\{u\in C\mid u_i=0 \text{ for all } i\in T\}$ punctured on $T$; for $\ell=0$ this recovers the familiar $C_i=\{u\in C\mid u_i=0\}$. More generally, projecting from an arbitrary flat $\Lambda$ amounts, at the level of a generator matrix $G$, to passing each column to its image in $\Sigma^*=\Sigma/\Lambda$. Upon choosing coordinates with $\Lambda=\langle e_1,\dots,e_{\ell+1}\rangle$, this is simply the deletion of the first $\ell+1$ rows of $G$. The only points of $\G$ mapping to the zero of the quotient are those lying in $\Lambda$. Once they are removed, $\G^*$ is non-degenerate, since (by assumption) $\G$ is not contained in a hyperplane of $\Sigma$.
	\end{remark}

	Before delving into the main results of this paper, we explore bounds that follow from the basic theory, as well as those that follow from the established bounds on planar arcs.

For ease of reference, Table~\ref{table: notation} collects the recurring notation. We follow the convention that Roman letters denote code- and arc-theoretic quantities, while Greek letters denote geometric objects (subspaces and flats) and point multiplicities; the one traditional exception is $H$, reserved for a hyperplane.

\begin{table}[h]
	\centering
	\caption{Summary of notation. Roman letters denote code/arc parameters;
		Greek letters denote geometric objects (subspaces, flats) and point
		multiplicities.}\label{table: notation}
	\begin{tabular}{|c|l|}
		\hline
		Symbol & Meaning \\ \hline\hline
		\multicolumn{2}{|c|}{\emph{Code and arc parameters (Roman)}}\\ \hline
		$q$ & order of the finite field $\F_q$ \\ \hline
		$\G$ ($C$) & a linear code, viewed as a projective system \\ \hline
		$n$ & length of $\G$; $n=|\G|$ (counting multiplicity) \\ \hline
		$k$ & dimension of $\G$; $k=\dim\Sigma+1$ \\ \hline
		$d$ & minimum distance of $\G$ \\ \hline
		$s$ & Singleton defect of $\G$; $s=S(\G)=n-k+1-d$ \\ \hline
		$k^\perp,d^\perp$ & dimension, minimum distance of $\G^\perp$; $k^\perp=n-k$ \\ \hline
		$t$ & Singleton defect of $\G^\perp$; $t=S(\G^\perp)$ \\ \hline
		A$^s$MDS, N$^s$MDS & code with $S(\G)=s$; resp.\ $S(\G)=S(\G^\perp)=s$ \\ \hline
		$m^s(k,q)$ & max.\ length of a non-degenerate $[n,k,d]_q$ A$^s$MDS code \\ \hline
		$m^s_t(k,q)$ & as above, with $\G^\perp$ additionally A$^t$MDS \\ \hline
		$m(k,q),\,m'(k,q)$ & $m^0(k,q)$ and $m^1_1(k,q)$ respectively \\ \hline
		$\kappa(s,q)$ & max.\ $k$ with a length-maximal A$^s$MDS code \\ \hline
		$(n,r)$-arc & $n$ points, $\le r$ per hyperplane, $=r$ on some hyperplane \\ \hline
		$\mu(P)$ & multiplicity of the point $P$ in $\G$ \\ \hline
		secant, tangent & hyperplane meeting $\G$ in $n-d$, resp.\ $n-d-1$, points \\ \hline\hline
		\multicolumn{2}{|c|}{\emph{Geometric objects (Greek; $H$ for hyperplane by convention)}}\\ \hline
		PG$(k,q)$ & projective space of dimension $k$ over $\F_q$ \\ \hline
		$m$-flat & projective subspace of (projective) dimension $m$ \\ \hline
		$\Sigma$ & ambient space PG$(k-1,q)$ containing $\G$ \\ \hline
		$\Sigma^*$ & quotient geometry (used in shortening) \\ \hline
		$H$ & a hyperplane of $\Sigma$ \\ \hline
		$\Lambda,\Omega,\Gamma$ & flats (of various dimensions) \\ \hline
		$\langle\mathcal{Q}\rangle$ & span of the point set $\mathcal{Q}$ \\ \hline
	\end{tabular}
\end{table}

	\section{Trivial codes and their bounds} \label{sec: trivial codes}
	We begin with 1-dimensional codes, where we temporarily widen the discussion to include degenerate codes. For any $n,d,q$ one may construct the trivial (possibly degenerate) $[n,1,d]_q$ \AS code by taking as $\G$ a multiset from  $\{0,1\}$ ($=$PG$(0,q) \cup\{0\})$, where $s=\mu(0)$, and $d=\mu(1)$. By definition, $\G$  will be MDS and non-degenerate if $d=n$, and degenerate with $s>0$ otherwise. The dual code $\G^\perp$ is an $[n,n-1,d^\perp]_q$ code, which is MDS ($d^\perp = 2$) if $d=n$, and is AMDS ($d^\perp = 1$) otherwise.

	In the 2-dimensional case, for arbitrary $n,d,q$, and $s=n-d-1$, one may clearly select a multiset of points from PG$(1,q)$ with the maximum multiplicity of any point being $n-d$.  As such, one may construct an $[n,2,d]_q$-\AS code if and only if $2+s\le n\le (s+1)(q+1)$. If $\G$ is such a code with $\G^\perp$ an $[n,n-2,d^\perp]_q$ code, then (Eqn. \ref{eqn: Proj Sys dual distance}) $d^\perp =3$ if $s=0$, and $d^\perp=2$ otherwise. For $k>2q$,  any $[k+2,2,d]_q$-code  necessarily satisfies $d^\perp =2 $ and is therefore dual to a $[k+2,k,2]_q$-AMDS code.
	Since a $[k+2,2,d]_q$ code can only be AMDS if $k+2\le 2(q+1)$, part \ref{part: thm Dodunekov P4} of Theorem  \ref{thm: Dodunekov 2.7} follows immediately, as does each of the following.

	\begin{lemma} \label{lem: trivial bounds} \leavevmode \vspace*{-2mm}
		\begin{enumerate}
			\item If $C$ is an $[n,2,d]_q$ \AS code, $s>0$, then $C^\perp$ is AMDS. \label{part 1 lem: trivial bounds}
			\item $m(1,q)$ is unbounded, and $m^s(1,q)=0$ for $s>0$; \label{part 2 lem: trivial bounds}
			\item $m^1(k,q)\ge k+1$, and $m^1(k,q)\ge k+2$ for $k>2q$. \label{part 3 lem: trivial bounds}
			\item $m'(k,q)\ge k+2$ for $2\le k \le 2q$. \label{part 4 lem: trivial bounds}
			\item $m^s(2,q)=(s+1)(q+1)$ \label{part 5 lem: trivial bounds}
		\end{enumerate}
	\end{lemma}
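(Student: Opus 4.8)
The plan is to treat the five parts as a collation of the explicit one- and two-dimensional constructions developed in the paragraphs immediately preceding the statement, together with the dual-distance formula~(\ref{eqn: Proj Sys dual distance}) and routine Singleton-defect bookkeeping. Parts~\ref{part 2 lem: trivial bounds} and~\ref{part 5 lem: trivial bounds} are essentially immediate from the classification of trivial codes, while parts~\ref{part 1 lem: trivial bounds}, \ref{part 3 lem: trivial bounds}, and~\ref{part 4 lem: trivial bounds} require a short verification that specific constructions have the claimed (dual) defect and are non-degenerate.

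For part~\ref{part 2 lem: trivial bounds}, I would recall that a $1$-dimensional code is a multiset drawn from $\{0,1\}=\mathrm{PG}(0,q)\cup\{0\}$ with $s=\mu(0)$ and $d=\mu(1)$. Non-degeneracy forces $\mu(0)=0$, hence $s=0$ and $d=n$; letting $n\to\infty$ shows $m(1,q)$ is unbounded, while for $s>0$ non-degeneracy fails, so no such code exists and $m^s(1,q)=0$. For part~\ref{part 5 lem: trivial bounds}, an A$^s$MDS code in $\mathrm{PG}(1,q)$ has maximal point-multiplicity $n-d=s+1$; spreading $s+1$ copies over each of the $q+1$ points of $\mathrm{PG}(1,q)$ realizes length $(s+1)(q+1)$, and this matches the upper bound from the ``if and only if'' range $2+s\le n\le (s+1)(q+1)$ recorded above (equivalently, the $k=2$ case of Lemma~\ref{lem: long bound}\ref{part: lem long bound P1}).

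For part~\ref{part 1 lem: trivial bounds}, I would evaluate~(\ref{eqn: Proj Sys dual distance}) directly: when $s>0$ some point of $\G\subset\mathrm{PG}(1,q)$ has multiplicity at least $2$, yielding a multiset $\mathcal{Q}$ of two equal points with $|\mathcal{Q}|-\dim\mathrm{lin}\langle\mathcal{Q}\rangle=1$, so $d^\perp=2$; the dual is then an $[n,n-2,2]_q$ code of Singleton defect $1$, i.e.\ AMDS. For the two bounds in part~\ref{part 3 lem: trivial bounds}, the first is witnessed by the frame $\{e_1,e_1,e_2,\dots,e_k\}$ in $\mathrm{PG}(k-1,q)$: it spans, and the hyperplane $x_2=0$ meets it in $k$ points, giving $d=1$ and defect $1$, so $m^1(k,q)\ge k+1$. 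For $k>2q$ I would invoke the two-dimensional duality already established in the text: a length-$(k+2)$ code in $\mathrm{PG}(1,q)$ necessarily has a repeated point (since $k+2>q+1$), whence $d^\perp=2$ and the dual is a $[k+2,k,2]_q$ AMDS code, non-degenerate because the primal can be chosen with $d\ge 2$.

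Part~\ref{part 4 lem: trivial bounds} runs along the same lines in reverse: for $2\le k\le 2q$ the range of part~\ref{part 5 lem: trivial bounds} permits a two-dimensional AMDS code $D=[k+2,2,k]_q$ (maximal multiplicity exactly $2$, which is feasible precisely when $k+2\le 2(q+1)$), and its dual $D^\perp=[k+2,k,2]_q$ is AMDS, so $D^\perp$ is NMDS and $m'(k,q)\ge k+2$. The only delicate point throughout is the non-degeneracy bookkeeping under dualization---tracked via the remark that a code is degenerate exactly when its dual distance equals $1$---and the need to arrange the two-dimensional constructions so that the maximal point-multiplicity is \emph{exactly} the value forcing the desired minimum distance; these are the steps I would write out most carefully, as everything else is direct substitution into the defect definition.
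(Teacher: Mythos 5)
Your proposal is correct and takes essentially the same route as the paper, which obtains all five parts directly from the same one- and two-dimensional classifications of trivial codes and the dual-distance formula~(\ref{eqn: Proj Sys dual distance}), with the same non-degeneracy bookkeeping. The only cosmetic difference is the first bound of part~\ref{part 3 lem: trivial bounds}, where you exhibit the frame $\{e_1,e_1,e_2,\dots,e_k\}$ directly in PG$(k-1,q)$ rather than dualizing a degenerate one-dimensional code; this is the same construction the paper records immediately afterwards (for general $s$) in its lemma $m^s(k,q)\ge k+s$.
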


	\begin{remark}
		Note that part  \ref{part: thm Dodunekov P2} of Theorem \ref{thm: Dodunekov 2.7}   is subsumed by Lemma \ref{lem: trivial bounds}(\ref{part 5 lem: trivial bounds}).
	\end{remark}

	Let us denote by $e_i$, $1\le i \le k$, the projective point with homogeneous coordinates being the $i$'th standard basis vector in $\F_q^k$.  For fixed $k,q,s$, the code $\G$ comprising $ \{e_1,e_2,e_3,\ldots,e_k\}$, where  $\mu(e_1)=s+1$, and $\mu(e_i)=1$ for $2\le i \le k$, is an $[k+s,k,1]_q$ \AS code, with $d=1$, and $d^\perp=2$. We thus have the following.
	\begin{lemma}
		$m^s(k,q)\ge k+s$.
	\end{lemma}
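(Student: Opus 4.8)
The plan is to verify directly that the multiset $\G$ exhibited immediately before the statement is an $[k+s,k,1]_q$ \AS code, whereupon its length $k+s$ witnesses the claimed bound. First I would record the length and dimension. Counting with multiplicity, $\mu(e_1)=s+1$ and $\mu(e_i)=1$ for $2\le i\le k$ give $n=\abs{\G}=(s+1)+(k-1)=k+s$. Since the support $\{e_1,\dots,e_k\}$ is a basis of $\F_q^k$, it spans $\Sigma=\mathrm{PG}(k-1,q)$, so $\dim\Sigma+1=k$ and $\G$ is non-degenerate.

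Next I would compute $n-d$ using Definition~\ref{defn: linear Code by PS}, i.e.\ determine the maximum weight that a hyperplane cuts out of $\G$. Writing a hyperplane $H$ as the zero set of a linear form $\sum_i a_i x_i$, the point $e_i$ lies on $H$ exactly when $a_i=0$; as $H$ is proper, at least one coefficient is nonzero, so $H$ omits at least one basis point. The key (and only mildly subtle) step is the resulting optimization: omitting $e_1$ costs weight $s+1$, whereas omitting any other $e_i$ costs only $1$, so the richest secant passes through $e_1$ together with all but one of the remaining basis points — for instance $H=\{x_k=0\}$, which contains $e_1,\dots,e_{k-1}$ and gives $\abs{\G\cap H}=(s+1)+(k-2)=s+k-1$. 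For the matching upper bound, note that since $H$ cannot contain the spanning set it omits some $e_j$, whence $\abs{\G\cap H}\le n-\mu(e_j)\le n-1=s+k-1$. Thus the maximum is exactly $s+k-1$.

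It then follows that $d=n-(n-d)=(k+s)-(s+k-1)=1$ and the Singleton defect is $S(\G)=n-k+1-d=(k+s)-k+1-1=s$, so $\G$ is indeed an $[k+s,k,1]_q$ \AS code and $m^s(k,q)\ge k+s$. I expect no genuine obstacle here; the single point requiring care is the hyperplane optimization, and even that reduces to the observation that a proper hyperplane omits at least one basis point while omitting a simple point is cheapest. As a consistency check one may also note, via Eqn.~(\ref{eqn: Proj Sys dual distance}), that taking $\mathcal{Q}$ to consist of two copies of $e_1$ yields $\abs{\mathcal{Q}}-\operatorname{dim}\operatorname{lin}\langle\mathcal{Q}\rangle=2-1=1$, so $d^\perp=2$, in agreement with the stated values $d=1$, $d^\perp=2$.
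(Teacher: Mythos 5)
Your proof is correct and takes exactly the paper's approach: the paper proves this lemma by the very same construction (the multiset $\{e_1,\dots,e_k\}$ with $\mu(e_1)=s+1$ and $\mu(e_i)=1$ otherwise), simply asserting that it is a $[k+s,k,1]_q$ A$^s$MDS code with $d=1$ and $d^\perp=2$. Your write-up just supplies the hyperplane-intersection verification that the paper leaves implicit.
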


	Codes of interest here are those which are non-degenerate and ``long''---in the sense of being (near) length-maximal. If $\G$ is an $[n,k,d]_q$ \AS code with $d^\perp=1$ then $\G$ is degenerate, and if $d=1$, then $n=k+s$, so $\G$ is far from length-maximal.   Thus, having discussed codes of dimension $2$ or less, in the sequel we shall generally limit discussion to cases  where $d,d^\perp>1$ and $k>2$.

	\section{Some established code bounds based on arcs}

	We now explore the relationship between $(n,r)$-arcs in projective planes and the bounds on the lengths of linear codes.

	\subsection{MDS Codes} \label{subsec: MDS Codes}

	In 1952,  Bush \cite{Bush1952} established that if $k\ge q$ then $m(k,q) = k+1$. For $k<q$ there is a long-standing conjecture regarding linear MDS codes: every linear $[n,k,n-k+1]_q$ MDS code  with $1 < k < q$ satisfies $n \le q + 1$, except when $q$ is even and $k = 3$ or $k = q - 1$ in which cases $n \le q + 2$. This conjecture is called the main conjecture on linear MDS codes. Though the main conjecture has been shown to hold in many cases, the topic is far from moribund. Below we list some of the cases where the main conjecture holds, but for a more complete summary see \cite{MR2061806} and \cite{Ball2012}.

	\begin{lemma}\label{lem: Bounds on MDS codes}
		Throughout, assume $q>k$.
		\begin{enumerate}
			\item $m(k,q)= q+1$ for $k=2,4, 5$.  \label{part 1: lem: Bounds on MDS codes}
			\item $m(3,q)= q+1$ if $q$ is odd, and  $m(3,q)= q+2$ if $q$ is even. \label{part 2: lem: Bounds on MDS codes}
			\item For $k\ge 4 $, $m(k,q)\le q+k-3$. 	\label{part 3: lem: Bounds on MDS codes}
			\item If $q=p^h$ and $4\le k \le p$, then $m(k,q)=q+1$.  \label{part 4: lem: Bounds on MDS codes}
			\item If $q=p^h$, $h>1$ and $ k \le 2p-2$, then $m(k,q)=q+1$.  \label{part 5: lem: Bounds on MDS codes}
			\item If $q=p^{2h}$,  and $ k \le \sqrt{q}-\sqrt{q}/p+2$, then $m(k,q)=q+1$. \label{part 6: lem: Bounds on MDS codes}
		\end{enumerate}
	\end{lemma}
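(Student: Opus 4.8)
The plan is to recognize $m(k,q)$ as the maximum size of an arc in $PG(k-1,q)$ and then to assemble each part from the classical theory of such arcs. Indeed, by Definition~\ref{defn: linear Code by PS} an $[n,k,n-k+1]_q$ MDS code is an $n$-set $\G \subseteq \Sigma = PG(k-1,q)$ meeting every hyperplane in at most $k-1$ points, i.e. an $n$-arc (every $k$ of its points are linearly independent). Thus $m(k,q)$ is exactly the largest arc in $PG(k-1,q)$, and the main conjecture on MDS codes is the assertion that this value equals $q+1$ outside the stated even-characteristic exceptions in dimensions $k=3$ and $k=q-1$.

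First I would dispatch the elementary cases. For $k=2$ an arc in $PG(1,q)$ is merely a set of distinct points, so $m(2,q)=|PG(1,q)|=q+1$, giving part~\ref{part 1: lem: Bounds on MDS codes} for $k=2$. The upper bound of part~\ref{part 3: lem: Bounds on MDS codes} starts from Lemma~\ref{lem: long bound}(\ref{part: lem long bound P1}) with $s=0$, which already yields $m(k,q)\le q+k-1$; improving the additive term from $k-1$ to $k-3$ for $k\ge 4$ is obtained from the sharper structural bounds known for arcs in $PG(k-1,q)$ of projective dimension at least $3$ (see the surveys \cite{Ball2012,MR2061806}).

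The remaining parts are the genuinely deep inputs, and I would simply invoke them through the arc dictionary. Part~\ref{part 2: lem: Bounds on MDS codes} is the planar case: $m(3,q)$ is the maximum size of a set of points in $PG(2,q)$ with no three collinear, and the evaluations $q+1$ (for $q$ odd) and $q+2$ (for $q$ even) rest on the oval/hyperoval theory together with Segre's theorem. The cases $k=4,5$ of part~\ref{part 1: lem: Bounds on MDS codes} are the corresponding exact arc counts in $PG(3,q)$ and $PG(4,q)$. Parts~\ref{part 4: lem: Bounds on MDS codes}--\ref{part 6: lem: Bounds on MDS codes} are precisely the ranges in which the main conjecture has been proved by Ball and collaborators---$4\le k\le p$ over $\F_{p^h}$, $k\le 2p-2$ over $\F_{p^h}$ with $h>1$, and $k\le\sqrt{q}-\sqrt{q}/p+2$ over $\F_{p^{2h}}$---each establishing $m(k,q)=q+1$ in the stated range; all of these I would cite from \cite{Ball2012} and \cite{MR2061806}.

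The main obstacle is that none of part~\ref{part 2: lem: Bounds on MDS codes} or parts~\ref{part 4: lem: Bounds on MDS codes}--\ref{part 6: lem: Bounds on MDS codes} admits a short self-contained argument: Segre's theorem and Ball's resolution of the main conjecture in these ranges are substantial results well beyond the scope of a single lemma. Consequently the role of this statement is organizational---to collect, under the uniform arc correspondence, exactly those established values of $m(k,q)$ needed in the sequel---so that the ``proof'' reduces to the translation above supplemented by the appropriate citations.
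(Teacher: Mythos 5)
Your proposal is correct and matches the paper's approach: the paper's proof is likewise a citation assembly, dispatching $k=2$ as clear and attributing the remaining parts to the classical arc literature (Segre and Casse/Gulati--Kounias for $k=4,5$; Bose and the hyperoval theory for $k=3$; Segre and Thas-type bounds for part 3; Ball's papers for parts 4--6). The only cosmetic difference is that the paper cites the original sources for each part while you route part 3 through Lemma \ref{lem: long bound} plus surveys, but the substance---the arc dictionary plus invocation of known results---is identical.
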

	\begin{proof}
		Part 1 is clear for $k=2$. For $k=4,5$, the case $q$ odd is due to Segre \cite{MR0075608}, whereas the case $q$ even is due to Casse \cite{MR684151}, and  Gulati and Kounias \cite{Gulati1973}.  For part 2 see \cite{Bose1947} for $q$ odd, and for $q$ even see e.g. \cite{Hirschfeld1979}. For part 3, see \cite{MR0075608} for $q>4$ odd,  \cite{MR0253138} for $q>4$ even. For the remaining parts, see \cite{Ball2012a}, \cite{Ball2019}.
	\end{proof}

	\subsection{Bounds on 3-dimensional codes}

	The results of Barlotti (1956) \cite{MR0083141}, when paired with the later developments of Ball et al.\ \cite{MR1466573}, give bounds on $(n,r)$-arcs in PG$(2,q)$ which provide the first three items in the following lemma. The fourth item is due to the construction of Denniston \cite{MR0239991}.

	\begin{lemma}[\cite{MR0083141},\cite{MR1466573}, \cite{MR0239991}]  \label{lem: Barlotti 3d}  \leavevmode \vspace*{-2mm}
		\begin{enumerate}
			\item $m^s(3,q)\le (s+1)(q+1)+1$.
			\item  If  $0<s< q-2$, and $(s+2,q) \ne (2^e,2^h)$,  then $m^s(3,q)\le (s+1)(q+1)-1$.
			\item  If  $0<s< q-2$, $(s+2)|q$, and $m^s(3,q) \ge (s+1)(q+1)$, then $ m^s(3,q)= (s+1)(q+1)+1$.
			\item  If  $0<s\le q-2$, and $(s+2,q) = (2^e,2^h)$,  then $m^s(3,q)= (s+1)(q+1)+1$.
		\end{enumerate}
	\end{lemma}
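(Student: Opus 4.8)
The plan is to translate the statement into the language of arcs in the plane and then invoke the classical theory. For $k=3$ an \AS code $\G$ has $d=n-2-s$, hence $n-d=s+2$; by Definition~\ref{defn: linear Code by PS} this says precisely that $\G$ is an $(n,s+2)$-multi-arc in $\Sigma=$PG$(2,q)$ (every line meets $\G$ in at most $s+2$ points, with equality on some line). Writing $r=s+2$, the quantity $m^s(3,q)$ is exactly the largest size of an $(n,r)$-arc in PG$(2,q)$, and a length-maximal code corresponds to a maximal arc, $n=(r-1)(q+1)+1=(s+1)(q+1)+1$. I would record at the outset that, for the lengths of interest, the arc is genuinely a set: the count in the next step shows any point of multiplicity $\ge 2$ forces $n$ strictly below the maximal value.

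For part 1, fix a point $P\in\G$ and sum intersection sizes over the $q+1$ lines through $P$. Each such line carries at most $r-\mu(P)$ points of $\G$ besides the $\mu(P)$ copies at $P$, so
\[
n-\mu(P)\le (q+1)\bigl(r-\mu(P)\bigr),
\]
whence $n\le (q+1)r-q\mu(P)\le (q+1)(s+2)-q=(s+1)(q+1)+1$, using $\mu(P)\ge 1$. This is Barlotti's bound \cite{MR0083141}, and equality forces $\mu(P)=1$ together with every line through every point being a secant, i.e.\ a maximal arc.

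Parts 2 and 3 both rest on the completion principle from \cite{MR0083141}: an $(n,r)$-arc with $n\ge (r-1)(q+1)$ extends to a maximal arc. I would argue by a deficiency count. When $n=(r-1)(q+1)$ the multiplicities vanish, and summing as in part 1 over the lines through a point $P\in\G$ gives total deficiency $1$; hence exactly one line through $P$ meets $\G$ in $r-1$ points while the other $q$ are secants. Consequently the lines meeting $\G$ in $r-1$ points are the only deficient lines, there are exactly $q+1$ of them, and any two meet off $\G$. The technical heart is to show these $q+1$ lines are concurrent; granting this, their common point $Q$ lies off $\G$, every line through $Q$ meets $\G$ in exactly $r-1$ points, and $\G\cup\{Q\}$ is a maximal $((r-1)(q+1)+1,\,r)$-arc. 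With the principle established, part 2 follows because its hypotheses preclude a maximal $r$-arc --- by \cite{MR1466573} when $q$ is odd, and because a maximal arc of degree $r$ forces $r\mid q$ (count the secants through an external point), impossible for $q=2^h$ with $r$ not a power of $2$ --- so $n\ge (r-1)(q+1)$ cannot occur and $m^s(3,q)\le (s+1)(q+1)-1$. For part 3 the divisibility $r\mid q$ holds by hypothesis and a code of length $\ge (s+1)(q+1)=(r-1)(q+1)$ is assumed; the completion principle upgrades it to a maximal arc, giving $m^s(3,q)=(s+1)(q+1)+1$.

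Finally, part 4 is the matching existence statement. For $q=2^h$ and $r=s+2=2^e$ with $r\mid q$ (equivalently $s\le q-2$), Denniston's construction \cite{MR0239991} produces a maximal $(n,r)$-arc attaining $n=(s+1)(q+1)+1$, which together with part 1 yields equality. The main obstacle throughout is the concurrency step inside the completion principle used for parts 2 and 3; everything else is either the elementary line-count of part 1 or a direct appeal to the cited (non)existence results for planar maximal arcs.
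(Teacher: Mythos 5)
The paper itself gives no proof of this lemma --- it is stated purely as a citation of Barlotti \cite{MR0083141}, Ball--Blokhuis--Mazzocca \cite{MR1466573} and Denniston \cite{MR0239991} --- so you are attempting strictly more than the paper does. Your part 1 is correct and complete (it is exactly Barlotti's line count through a point of $\G$), and part 4 is a correct appeal to Denniston's construction. The genuine gap is in parts 2 and 3: both are made to rest on your ``completion principle'' (an $(n,r)$-arc with $n\ge (r-1)(q+1)$ extends to a maximal arc), and you explicitly grant its crux, the concurrency of the $q+1$ deficient lines, without proof. This cannot be granted, because the principle as you state it is false for general degree: a conic in PG$(2,q)$, $q$ odd, is a $(q+1,2)$-arc of size exactly $(r-1)(q+1)$ whose $q+1$ tangent lines are not concurrent, and it does not extend to a maximal $(q+2,2)$-arc since hyperovals do not exist for odd $q$. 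Hence any correct proof of concurrency must use $r=s+2\ge 3$ (and, as it turns out, divisibility properties of $q$ modulo $r$) in an essential way; the deficiency count you set up is structurally identical for $r=2$, so it cannot by itself deliver the conclusion.

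The gap is fillable, and filling it is the real content. For a point $Q$ off the arc, every line meets the arc in $0$, $r-1$ or $r$ points, so counting the pencil through $Q$ gives $\delta_Q+r\epsilon_Q=q+1$, whence $\delta_Q\equiv q+1 \pmod{r}$, where $\delta_Q$ is the number of deficient lines through $Q$. If $r\nmid q$ and $r\nmid q+1$, then every off-arc point has $\delta_Q\ge 2$; but summing $\delta_Q$ over the $q+1-r$ off-arc points of an $r$-secant gives exactly $q+1-r$, a contradiction. If $r\mid q+1$, then every off-arc point of a deficient line has $\delta_Q\ge r$, and summing $\delta_Q-1$ over the $q+2-r$ off-arc points of a deficient line gives $q\ge (q+2-r)(r-1)$, which fails for all $3\le r\le q$. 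So a near-maximal arc of degree $r\ge 3$ forces $r\mid q$; this already proves part 2 once \cite{MR1466573} is invoked for $q$ odd (note that for $q$ odd with $r\mid q$, e.g.\ $q=9$, $r=3$, the divisibility argument is silent and you genuinely need Ball--Blokhuis--Mazzocca). Finally, when $r\mid q$ we get $\delta_Q\equiv 1\pmod{r}$, so $\delta_Q\ge 1$ for \emph{every} off-arc point; setting $e_Q=\delta_Q-1$, incidence counting gives $\sum_Q e_Q=q$, while counting the $\binom{q+1}{2}$ pairs of deficient lines (which meet only off the arc) gives $\sum_Q e_Q(e_Q+1)=q(q+1)$, hence $\sum_Q e_Q^2=q^2\le (\max_Q e_Q)\sum_Q e_Q$, forcing a single point with $e_Q=q$, i.e.\ all $q+1$ deficient lines concurrent. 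That is precisely the statement you granted, and with it your deductions of parts 2 and 3 go through.
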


	Further, we have the following.

	\begin{lemma} \label{lem: s-arcs in the plane}
		$m^s(3,q)\le (s+1)q+1$ in each of the following cases:
		\begin{enumerate}
			\item $q$ is prime, and $s+2\le (q+3)/2$ (\cite{MR1413900}) \label{part: 1, prime case  lem: s-arcs in the plane}
			\item $\gcd(s+2,q)=1$ and $s<\sqrt{2q}-1$  (\cite{MR2158768} ) \label{part: 2, lem: s-arcs in the plane}
			\item $q$ odd, $s+2$ divides $q$, $s<\frac14\sqrt{q}-2$ (\cite{MR2067604}) \label{part: 4, lem: s-arcs in the plane}
			\item $\gcd(s+2,q)=1$ and there is a line disjoint from the corresponding projective system. (\cite{MR1317133}) \label{part: 3, lem: s-arcs in the plane}
		\end{enumerate}
	\end{lemma}

	\section{Main Results}

	In this section, we present the primary findings, focusing on the bounds for the lengths of codes with non-zero Singleton defects. We first establish some elementary properties of $m^s(k,q)$.

	\begin{lemma}\label{lem: bounds on length of AsMDS} The following hold.
		\begin{enumerate}
			\item $m^{s}(k,q)<m^{s+1}(k,q)$.  \label{part: 1 bounds on length of AsMDS}
			\item For $k\ge 2$, $m^{s}(k,q)\le m^{s}(k-1,q)+1$.  \label{part: 2 bounds on length of AsMDS}
			\item For $\alpha<k$, $m^{s}(k,q)\le m^{s}(k-\alpha,q)+\alpha$. In particular, for $k\ge 3$, $m^{s}(k,q)\le m^{s}(3,q)+k-3$.   \label{part: 3 bounds on length of AsMDS}
			\item 
			If $s=k-1+b$, where $b=s_1+s_2$, $s_1,s_2\ge 0$, then $m^s(k,q)\ge m^{s_1}(k,q)+m^{s_2}(k,q).$ \label{part: 4 bounds on length of AsMDS}
			\item $m^s(3,q)\ge \left\{ \begin{array}{ll}
				\frac{s+2}{2}\cdot  m(3,q)	& \text{ if $s$ is even, } \\
				\frac{s-1}{2}\cdot m(3,q) +m^1(3,q)	&  \text{ if $s$ is odd.}
			\end{array}\right. $ \label{part: 5 bounds on length of AsMDS}
		\end{enumerate}
	\end{lemma}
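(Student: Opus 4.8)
The plan is to prove the five parts in order, since parts (3) and (5) are mere iterations of (2) and (4), and part (2) rests on the shortening formalism of Proposition~\ref{prop: quotient properties}.

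For part (1) I would begin with a code $\G$ of length $n=m^{s}(k,q)$ and simply thicken it at a point of a secant. Because $n-d=\max\{|\G\cap H|\}\ge k-1\ge 1$, any secant hyperplane $H$ contains at least one point $P\in\G$; replacing $\mu(P)$ by $\mu(P)+1$ yields a non-degenerate code $\G'$ of length $n+1$ spanning the same $\Sigma$. Only hyperplanes through $P$ see their intersection grow, and each grows by exactly one, so the largest secant now has $(n-d)+1$ points and $d(\G')=d$. Hence $S(\G')=(n+1)-k+1-d=s+1$, producing an $\mathrm{A}^{s+1}\mathrm{MDS}$ code of length $m^{s}(k,q)+1$ and giving $m^{s}(k,q)<m^{s+1}(k,q)$.

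For part (2) (taking $k\ge 3$, so the codimension $r=k-1\ge 2$ demanded by Proposition~\ref{prop: quotient properties} is met) I would quotient a maximal-length code $\G$ at a single point $P\in\G$, so that $\ell=0$ and $\alpha=\mu(P)$. The quotient $\G^{*}$ lies in $\mathrm{PG}(k-2,q)$, has length $n-\mu(P)$, spans that space (the image of a spanning set spans the quotient) and is therefore non-degenerate; Proposition~\ref{prop: quotient properties} gives $s^{*}\le s-\mu(P)+1$ and $n-\mu(P)\le m^{s^{*}}(k-1,q)$. Feeding this into the strict monotonicity of part (1), applied $\mu(P)-1$ times, yields $m^{s^{*}}(k-1,q)\le m^{s}(k-1,q)-(\mu(P)-1)$, whence $n\le m^{s}(k-1,q)+1$. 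Part (3) then follows by inducting on $\alpha$, chaining part (2) across the dimensions $k,k-1,\dots,k-\alpha+1$, the displayed case being $\alpha=k-3$.

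Parts (4) and (5) are constructive. For part (4) I would take codes $\G_{1},\G_{2}$ in a common $\mathrm{PG}(k-1,q)$ realising $m^{s_{1}}(k,q)$ and $m^{s_{2}}(k,q)$, fix a secant of each, and apply a projectivity so these secants coincide in a single hyperplane $H^{*}$; then form the multiset union $\G=\G_{1}\sqcup\G_{2}$. For every hyperplane $H$ one has $|\G\cap H|=|\G_{1}\cap H|+|\G_{2}\cap H|\le (k-1+s_{1})+(k-1+s_{2})$, with equality at $H^{*}$, so $n-d=2(k-1)+s_{1}+s_{2}$ and $\G$ is a non-degenerate $\mathrm{A}^{s}\mathrm{MDS}$ code of length $m^{s_{1}}(k,q)+m^{s_{2}}(k,q)$ with $s=(k-1)+b$. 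Part (5) then comes from iterating part (4) with $k=3$ and $s_{2}=0$, peeling off one MDS block of length $m(3,q)=m^{0}(3,q)$ at a time; starting the recursion at $s=0$ gives the even bound $\tfrac{s+2}{2}m(3,q)$, and starting at $s=1$ (keeping a single $\mathrm{A}^{1}\mathrm{MDS}$ block) gives the odd bound $\tfrac{s-1}{2}m(3,q)+m^{1}(3,q)$.

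The step I expect to fight with is part (2): one must verify the quotient is genuinely non-degenerate and, above all, absorb the fact that shortening at a point of multiplicity $\mu(P)>1$ \emph{lowers} the defect below $s$ --- it is precisely the strict increase proved in part (1) that converts this loss back into the clean ``$+1$''. The constructive parts (4)--(5) are routine once the common secant is arranged, the only care being to confirm that the maximal secant size of the union is actually \emph{attained} at $H^{*}$ (not merely bounded), so that the defect lands exactly on $s$ rather than dropping beneath it.
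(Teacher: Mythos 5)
Your proposal is correct and takes essentially the same route as the paper's own proof: part (1) by increasing the multiplicity of a point on a secant, parts (2)--(3) by a point-quotient (shortening) combined with the strict monotonicity of part (1), and parts (4)--(5) by a multiset union of two codes followed by recursion. If anything you are slightly more careful than the paper, which does not explicitly align the two secants by a projectivity in part (4) (needed for the defect of the union to equal exactly $k-1+s_1+s_2$, though without it one can still conclude via part (1)) and does not remark on the codimension hypothesis of Proposition~\ref{prop: quotient properties} in part (2).
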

	\begin{proof}
		Let $\G$ be an $[n,k,d]_q$ \AS code, let $\mathcal{H}$ be a secant of $\G$, and let $P\in \G\cap \mathcal{H}$.   If $n=m^{s}(k,q)$, then  $\G'=\G\cup\{P\}$ is an $[n+1,k,d]_q$ A$^{s+1}$MDS code (by the maximality of $n$), giving part \ref{part: 1 bounds on length of AsMDS}.
		For part \ref{part: 2 bounds on length of AsMDS},  suppose $P$ has multiplicity $m$. Taking the quotient geometry by $P$, $\G^*=\G\setminus \{P\}$ corresponds to an $[n-m,k-1,d]_q$ code in  PG$(k-2,q)$. By part \ref{part: 1 bounds on length of AsMDS}, $n-m\le m^{s-m+1}(k-1,q)\le m^s(k-1,q)-(m-1)$. Part \ref{part: 3 bounds on length of AsMDS} follows inductively from part \ref{part: 2 bounds on length of AsMDS}.\\
		For part \ref{part: 4 bounds on length of AsMDS}, observe that if $\G_1$ is an $[n_1,k,d_1]_q$ A$^{s_1}$MDS code, and $\G_2$ is an $[n_2,k,d_2]_q$ A$^{s_2}$MDS code then each hyperplane of PG$(k-1,q)$ holds at most $k+s_1-1$ points of $\G_1$, and at most $k+s_2-1$ points of $\G_2$. Taking $\G=\G_1\cup\G_2$ yields an $[n_1+n_2,k,d]_q$ \AS code where $s=k-1+s_1+s_2$.\\
		Part \ref{part: 5 bounds on length of AsMDS} is obtained by recursively applying  part \ref{part: 4 bounds on length of AsMDS}.
	\end{proof}

	\begin{remark}
		Note that part  \ref{part: thm Dodunekov P3} of Theorem \ref{thm: Dodunekov 2.7}   is subsumed by Lemma \ref{lem: bounds on length of AsMDS}(\ref{part: 3 bounds on length of AsMDS}).
	\end{remark}

	\begin{remark}\label{rem: degenerate can be ignored}
		Note that if $\G$ is a degenerate $[n,k,d]_q$ \AS code, having $\alpha>0$ all-zero coordinates, then deleting these coordinates from $\G$ results in a non-degenerate $[n-\alpha, k, d]_q$ A$^{s-\alpha}$MDS code with $n-\alpha \le m^{s-\alpha}(k,q) $. By part \ref{part: 1 bounds on length of AsMDS} of Lemma \ref{lem: bounds on length of AsMDS}, we see that $\G$ satisfies $n\le m^{s-\alpha}(k,q)+\alpha \le m^s(k,q)$. Consequently, in searching for long codes, it suffices to consider only those that are non-degenerate.
	\end{remark}

	The bounds in Lemmas \ref{lem: Barlotti 3d}  and \ref{lem: s-arcs in the plane} may be extended to higher dimensions as follows.
	\begin{corollary}\label{cor: from Barlotti} For $k\ge 3$, the following hold.
		\begin{enumerate}
			\item  (cf. Lemma \ref{lem: long bound} (\ref{part: lem long bound P1}))  $m^s(k,q)\le (s+1)(q+1)+k-2$. \label{part 1 cor: from Barlotti}
			\item  If  $0<s< q-2$, and $(s+2,q) \ne (2^e,2^h)$, then $m^s(k,q)\le (s+1)(q+1)+k-4$. \label{part 2 cor: from Barlotti}
			\item Under the conditions specified in parts 1--3 of Lemma \ref{lem: s-arcs in the plane}, $m^s(k,q)\le q(s+1)+k-2$. \label{part 4 cor: from Barlotti}
			\item  If $\gcd(s+2,q)=1$, $s\le q$, $\G$ is an $[n,k,d]_q$ \AS code, and some hyperplane $H$ satisfies $|H\cap \G|=k-3$, then $n\le q(s+1)+k-2$. \label{part 6 cor: from Barlotti}
		\end{enumerate}
	\end{corollary}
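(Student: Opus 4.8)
The plan is to collapse $\G$ onto a projective plane by a single quotient and there exhibit a line disjoint from the image, so that the known planar bound for arcs with a disjoint line finishes the argument. Write $\Sigma=\text{PG}(k-1,q)$ and let $H$ be the hyperplane with $|H\cap\G|=k-3$. First I would fix a $(k-4)$-flat $\Lambda\subseteq H$ that contains the $k-3$ points of $H\cap\G$; such a $\Lambda$ exists because $k-3$ points span a flat of projective dimension at most $k-4$, and if they fail to span a full $(k-4)$-flat I simply enlarge $\Lambda$ within $H$. Since $\Lambda\subseteq H$ we have $\Lambda\cap\G\subseteq H\cap\G$, while $H\cap\G\subseteq\Lambda$ by construction, so $\alpha:=|\Lambda\cap\G|=k-3=\ell+1$, where $\ell=\dim\Lambda=k-4$. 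For $k=3$ the flat $\Lambda$ is empty, $H$ is already a disjoint line of $\G$, and the planar case applies directly.

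Next I would pass to the quotient geometry $\Sigma^{*}$ at $\Lambda$. As $\Lambda$ has codimension $r=3$, we have $\Sigma^{*}\cong\text{PG}(2,q)$, and the shortening construction preceding Proposition \ref{prop: quotient properties} yields a planar code $\G^{*}$ of length $n-(k-3)$ and dimension $3$, with some defect $s^{*}$; since $\alpha=\ell+1$, Proposition \ref{prop: quotient properties}(1) guarantees $s^{*}\le s$. The crux is to identify a line of $\Sigma^{*}$ disjoint from $\G^{*}$. Because $\Lambda\subseteq H$, the hyperplane $H$ descends to a line $H^{*}$ of $\Sigma^{*}$, and any point $P\in H^{*}$ is the image of a $(k-3)$-flat $\Omega_{P}$ with $\Lambda\subseteq\Omega_{P}\subseteq H$. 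The chain $\Lambda\cap\G\subseteq\Omega_{P}\cap\G\subseteq H\cap\G=\Lambda\cap\G$ forces $|\Omega_{P}\cap\G|=\alpha$, whence the assigned multiplicity is $\mu(P)=|\Omega_{P}\cap\G|-\alpha=0$. Thus $H^{*}$ carries no point of $\G^{*}$.

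To conclude, I would apply the planar disjoint-line bound, Lemma \ref{lem: s-arcs in the plane}(\ref{part: 3, lem: s-arcs in the plane}), to $\G^{*}$, obtaining $n-(k-3)=|\G^{*}|\le (s^{*}+1)q+1$; combined with $s^{*}\le s$ this gives $n\le(s^{*}+1)q+1+(k-3)\le q(s+1)+k-2$. The step I expect to be the genuine obstacle is not the quotient bookkeeping but the invocation of Lemma \ref{lem: s-arcs in the plane}(\ref{part: 3, lem: s-arcs in the plane}): that bound carries the coprimality hypothesis $\gcd(s^{*}+2,q)=1$, which cannot be dropped, since maximal arcs possess disjoint lines yet exceed the claimed length and are precisely the configurations excluded by coprimality. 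I would therefore need to track how the quotient defect $s^{*}$ relates to $s$ and confirm that $s^{*}$ meets the coprimality condition under which the cited planar estimate holds.
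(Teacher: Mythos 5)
Your quotient argument for the final item (the hyperplane-intersection bound, part \ref{part 6 cor: from Barlotti}) is essentially the paper's own proof: the paper likewise takes a $(k-4)$-flat $\Omega\subseteq H$ containing $H\cap\G$, passes to the quotient to obtain an $[n-k+3,3,d']_q$ A$^{s'}$MDS code with $s'\le s$ whose image of $H$ is a disjoint line, and then invokes Lemma \ref{lem: s-arcs in the plane}(\ref{part: 3, lem: s-arcs in the plane}); your verification that every point of $H^{*}$ receives multiplicity $0$ is in fact more careful than the paper's, which simply asserts disjointness. The genuine gap in your proposal as a proof of the stated corollary is coverage: you prove only one of the four items. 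The first three items are not consequences of your quotient construction; in the paper they follow in one line by combining Lemma \ref{lem: bounds on length of AsMDS}(\ref{part: 2 bounds on length of AsMDS})--(\ref{part: 3 bounds on length of AsMDS}), i.e.\ $m^{s}(k,q)\le m^{s}(3,q)+k-3$ (a reduction that preserves $s$), with the planar bounds of Lemmas \ref{lem: Barlotti 3d} and \ref{lem: s-arcs in the plane}. You need at least that sentence to claim the full statement.

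The coprimality obstacle you flag at the end is real, but you should know it is a defect of the paper's own statement and proof, not of your approach: Lemma \ref{lem: s-arcs in the plane}(\ref{part: 3, lem: s-arcs in the plane}) requires $\gcd(s+2,q)=1$, while part \ref{part 6 cor: from Barlotti} carries no such hypothesis, and the paper applies the lemma to the quotient code without checking it. The hypothesis cannot be dropped: for $q$ even and $(s+2)\mid q$, a Denniston maximal arc is a $[(s+1)q+s+2,\,3,\,(s+1)q]_q$ A$^s$MDS code admitting external lines (lines meeting it in $k-3=0$ points), and $(s+1)q+s+2>q(s+1)+1$, contradicting the claimed bound at $k=3$. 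Moreover, even if one adds $\gcd(s+2,q)=1$ to the hypotheses, the quotient defect may satisfy $s'<s$, in which case coprimality for $s'$ is not inherited; that case can be rescued by the cruder arc bound $n-k+3\le(s'+1)(q+1)+1\le s(q+1)+1$ (Lemma \ref{lem: Barlotti 3d}), which is at most $q(s+1)+1$ precisely when $s\le q$. So your instinct to ``track $s^{*}$ and confirm coprimality'' identifies exactly where the proof stands or falls; as stated, it falls, and the statement needs amending rather than your argument needing repair.
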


	\begin{remark}
		Note that Proposition \ref{prop: AMDS de Boer} is a particular case of part \ref{part 2 cor: from Barlotti} of Corollary \ref{cor: from Barlotti}.
	\end{remark}

	\begin{proof}
		Parts \ref{part 1 cor: from Barlotti}, \ref{part 2 cor: from Barlotti},  and \ref{part 4 cor: from Barlotti} follow immediately from the application of part \ref{part: 2 bounds on length of AsMDS} of Lemma \ref{lem: bounds on length of AsMDS} to Lemmas \ref{lem: Barlotti 3d}  and \ref{lem: s-arcs in the plane}.
		For part \ref{part 6 cor: from Barlotti}, let $H\cap \G=S$, and let $\Omega$ be a $(k-4)$-flat with $S\subseteq \Omega\subseteq H$. Taking the quotient through $\Omega$ yields an $[n-k+3, 3,d']_q$ A$^{s'}$MDS code $\G'$, where $d'\ge d$, $s'\le s$, and the line corresponding to $H$ is disjoint from $\G'$. If $s'=s$, then since $\gcd(s'+2,q)=\gcd(s+2,q)=1$, part \ref{part: 3, lem: s-arcs in the plane} of Lemma \ref{lem: s-arcs in the plane} applies to $\G'$, giving $n-k+3\le (s+1)q+1$, so $n\le q(s+1)+k-2$. If instead $s'<s$, then part 1 of Lemma \ref{lem: Barlotti 3d} gives $n-k+3\le (s'+1)(q+1)+1\le s(q+1)+1$, so that $n\le s(q+1)+k-2\le q(s+1)+k-2$, the last inequality holding as $s\le q$. In either case the result follows.
	\end{proof}

	\subsection{Projective codes}

	Recall that a code $\G$ is projective if $\mathcal{G}$ is a set, i.e.\ every point has multiplicity $1$. We now establish conditions under which codes are necessarily projective, and derive bounds on their lengths.

	Note that according to equation (\ref{eqn: Proj Sys dual distance}), a (non-degenerate) code of dimension $k>2$ is projective if and only if $d^\perp\ne 2$.

	\begin{theorem}\label{thm-Projective}
		Let $\G$ be an $[n,k,d]_q$ \AS code, $k\ge 2$.
		\begin{enumerate}
			\item If $s=0$ then $\G$ is projective.\label{part: 1 thm proj}
			\item If $k=2$, then $\G$ is projective if and only if $s=0$. \label{part: 2 thm proj}
			\item If $s>0$, $k>2$, and $n> m^{s-1}(k-1,q)+2$ then $\G$ is projective. \label{part: 3 thm proj}
			\item For $k>2$, if  $n> s(q+1)+k-1$  then $\G$ is projective. \label{part: 4 thm proj}
			\item If $s=1$, $k>q$, and $n> k+2$  then $\G$ is projective. \label{part: 5 thm proj}
		\end{enumerate}
	\end{theorem}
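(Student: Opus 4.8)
The plan is to treat the five parts as a cascade: Parts 1 and 2 by direct counting, Part 3 as the engine via the quotient construction, and Parts 4 and 5 as corollaries of Part 3 obtained by inserting known length bounds. The unifying observation is that $\G$ fails to be projective exactly when some point $P$ has $\mu(P)\ge 2$, and the governing constraint throughout is that no hyperplane of $\Sigma$ may contain more than $n-d=k-1+s$ points of $\G$.

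For Part 1, I would suppose $s=0$ and assume, for contradiction, that some $P$ has $\mu(P)\ge 2$. Since $\G$ is non-degenerate it spans $\Sigma$ and hence contains at least $k$ distinct points, so I can choose $k-2$ further distinct points $Q_1,\dots,Q_{k-2}$ different from $P$; the $k-1$ points $P,Q_1,\dots,Q_{k-2}$ lie in a common hyperplane $H$, whence $|H\cap\G|\ge \mu(P)+(k-2)\ge k$. This exceeds $n-d=k-1$, a contradiction, so $\G$ is projective (the count degenerates correctly to the single point $P$ when $k=2$). For Part 2, when $k=2$ the hyperplanes of PG$(1,q)$ are single points, so $\max_P\mu(P)=n-d=s+1$; thus $\G$ is a set exactly when $s+1=1$, that is, when $s=0$, which is the asserted equivalence (and is consistent with Part 1).

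Part 3 is the crux. Assuming $\G$ is not projective, I fix $P$ with $m:=\mu(P)\ge 2$ and pass to the quotient geometry at $P$. By Proposition~\ref{prop: quotient properties} (the $\ell=0$, $\Lambda=P\in\G$ case), this yields a non-degenerate $[n-m,\,k-1,\,d]_q$ code $\G^\ast$ of Singleton defect $s^\ast=s-m+1\le s-1$. The length bound $n-m\le m^{s^\ast}(k-1,q)$, combined with the strict monotonicity of Lemma~\ref{lem: bounds on length of AsMDS}(\ref{part: 1 bounds on length of AsMDS}), which telescopes to $m^{s^\ast}(k-1,q)\le m^{s-1}(k-1,q)-(m-2)$, gives
\[
n \;\le\; m + m^{s-1}(k-1,q) - (m-2) \;=\; m^{s-1}(k-1,q)+2 ,
\]
contradicting the hypothesis $n>m^{s-1}(k-1,q)+2$. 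The delicate point, which I expect to be the main obstacle, is getting this arithmetic to land on exactly $+2$: the multiplicity $m$ enters both through the length drop ($n\mapsto n-m$) and through the defect drop ($s\mapsto s-m+1$), and it is precisely the cancellation of the two occurrences of $m$ that produces the sharp constant. Using only the coarse inequality $s^\ast\le s$ would degrade the bound to $+m$, so tracking the defect formula precisely is essential.

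Finally, Parts 4 and 5 follow by feeding concrete upper bounds for $m^{s-1}(k-1,q)$ into Part 3. For Part 4 (with $s\ge 1$; the case $s=0$ is Part 1), Lemma~\ref{lem: long bound}(\ref{part: lem long bound P1}) applied in dimension $k-1\ge 2$ gives $m^{s-1}(k-1,q)\le s(q+1)+k-3$, so $s(q+1)+k-1\ge m^{s-1}(k-1,q)+2$ and the hypothesis $n>s(q+1)+k-1$ triggers Part 3. For Part 5, with $s=1$ and $k>q$ we have $k-1\ge q$, so the result of Bush \cite{Bush1952} gives $m^{0}(k-1,q)=m(k-1,q)=k$; hence $m^{s-1}(k-1,q)+2=k+2$ and the hypothesis $n>k+2$ again triggers Part 3 (noting $k>q\ge 2$ ensures $k>2$).
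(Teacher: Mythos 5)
Your proposal is correct, and for parts \ref{part: 1 thm proj}, \ref{part: 2 thm proj}, \ref{part: 3 thm proj}, and \ref{part: 5 thm proj} it is essentially the paper's own argument: part \ref{part: 3 thm proj} is proved by quotienting at a point of multiplicity $m\ge 2$, invoking Proposition \ref{prop: quotient properties} to obtain an $[n-m,k-1,d]_q$ code of defect $s-m+1$, and telescoping the strict monotonicity of Lemma \ref{lem: bounds on length of AsMDS}(\ref{part: 1 bounds on length of AsMDS}) so that the two occurrences of $m$ cancel to yield $n\le m^{s-1}(k-1,q)+2$ --- exactly the cancellation you flag as the delicate point --- and part \ref{part: 5 thm proj} is then read off from Bush's $m(k-1,q)=k$ for $k-1\ge q$, just as in the paper. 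The one genuine divergence is part \ref{part: 4 thm proj}. The paper proves it directly: given $P$ with $\mu(P)\ge 2$, it takes a $(k-3)$-flat $\Gamma$ through $P$ with $|\Gamma\cap\G|=x\ge k-1$ and counts over the pencil of $q+1$ hyperplanes through $\Gamma$, each meeting $\G$ in at most $k+s-1-x\le s$ points outside $\Gamma$, giving $n\le s(q+1)+k-1$ with no appeal to prior length bounds. You instead deduce part \ref{part: 4 thm proj} from part \ref{part: 3 thm proj} by inserting the Faldum--Willems bound $m^{s-1}(k-1,q)\le s(q+1)+k-3$ (Lemma \ref{lem: long bound}, also recovered independently in Corollary \ref{cor: from Barlotti}(\ref{part 1 cor: from Barlotti})). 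This is legitimate --- that bound is established without reference to the present theorem, so there is no circularity --- and it has the virtue of making parts \ref{part: 4 thm proj} and \ref{part: 5 thm proj} uniform corollaries of the single engine in part \ref{part: 3 thm proj}; what it costs is self-containedness, since the paper's pencil argument needs nothing beyond the definition of a secant, and that argument also exposes the geometric mechanism (a $(k-3)$-flat rich in points of $\G$ forces the code to be short) which the paper reuses later, for instance in Lemma \ref{lem: fat k-3 flat short code}.
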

	\begin{proof}
		Part \ref{part: 1 thm proj} follows from the definition of MDS codes. In $\Sigma=$ PG$(1,q)$, points and hyperplanes coincide, so according to Definition \ref{defn: linear Code by PS},  an $[n,2,d]_q$ code is projective if and only if $n-d=1$, giving part \ref{part: 2 thm proj}.

		For the third part, let $\G=\{P_1,P_2,\ldots,P_n\}$ be an $[n,k,d]_q$ \AS code where, say $\mu(P_n)=m\ge2$. Taking the quotient at $P_n$, the projective system $\G'=\G\setminus \{P_n\}$ corresponds to an $[n-m, k-1,d]_q$ A$^{s'}$MDS code where $s'=s-m+1$, giving $n\le m^{s'}(k-1,q)+m\le m^{s-1}(k-1,q)+2$ (by Lemma \ref{lem: bounds on length of AsMDS}).

		For part \ref{part: 4 thm proj}, suppose $\G$ is an $[n,k,d]_q$-A$^s$MDS code, and $ \mu(P)=m\ge 2$. Through $P$, there exists a $(k-3)$-flat $\Gamma$ with $|\Gamma \cap \G|=x\ge k-1$.  Each of the $q+1$ hyperplanes through $\Gamma$ meets $\G$ in at most $k+s-1-x\le s$ points outside of $\Gamma$, giving the result.

		Part \ref{part: 5 thm proj} follows from part \ref{part: 3 thm proj} and the fact that $m(k-1,q)=k$ for $k-1\ge q$.
	\end{proof}

	\begin{remark}
		By part \ref{part: 3 thm proj}, an $[n,k,n-k]_q$ AMDS-code $\G$ is necessarily projective if $n>m(k-1,q)+2$.  With the current state of the Main Conjecture on linear MDS codes, in many cases $m(k-1,q)=q+1$, whence $\G$ is projective if $n>q+3$.
	\end{remark}

	We may also deduce the following bound on \AS codes that are not dual to an AMDS code.

	\begin{lemma} \label{lem: bounds based on d perp}
		If $k\ge 3$, and $t>1$ then
		\[
		m^s_t(k,q)\le m^{s-1}(t,q)+d^\perp = m^{s-1}(t,q)+k-t+1.
		\]
	\end{lemma}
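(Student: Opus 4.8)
The plan is to convert the dual defect into a quotient dimension and then shorten. Since $\G^\perp$ is an \AT code with parameters $[n,n-k,d^\perp]_q$, its Singleton defect is $t = n-(n-k)+1-d^\perp = k+1-d^\perp$, so $d^\perp = k-t+1$; this is exactly the claimed equality of the two right-hand sides, and it also shows that $t>1$ forces $s\ge 1$ (an MDS code is dual to an MDS code), so that $m^{s-1}(t,q)$ is meaningful. The strategy is to locate inside $\Sigma=\mathrm{PG}(k-1,q)$ a flat $\Lambda$ through which the quotient of $\G$ has dimension exactly $t$, and then invoke the shortening estimate of Proposition~\ref{prop: quotient properties}.

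First I would produce the flat. By the dual-distance characterization (Eqn.~(\ref{eqn: Proj Sys dual distance})) there is a subset $\mathcal{Q}\subseteq\G$ with $|\mathcal{Q}|=d^\perp$ and $|\mathcal{Q}|-\dim\operatorname{lin}\langle\mathcal{Q}\rangle=1$; that is, $\mathcal{Q}$ is a minimal dependent set spanning a flat $\Lambda:=\langle\mathcal{Q}\rangle$ of projective dimension $\ell = d^\perp-2$. The codimension of $\Lambda$ in $\Sigma$ is $r=(k-1)-\ell = k-d^\perp+1 = t$, and the hypothesis $t>1$ guarantees $r\ge 2$, so the quotient construction applies. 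Writing $\alpha=|\Lambda\cap\G|$, the containment $\mathcal{Q}\subseteq\Lambda\cap\G$ gives $\alpha\ge d^\perp = \ell+2 > \ell+1$.

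Taking the quotient $\G^*$ at $\Lambda$, Proposition~\ref{prop: quotient properties}(1) yields an A$^{s^*}$MDS code of dimension $r=t$ with $n\le m^{s^*}(t,q)+\alpha$ and $s^*\le s-\alpha+\ell+1 = s-1-(\alpha-d^\perp)$. Setting $j=\alpha-d^\perp\ge 0$, I would use monotonicity of the defect (Lemma~\ref{lem: bounds on length of AsMDS}(1)), which over the integers gives $m^{a}(t,q)+b\le m^{a+b}(t,q)$ for $b\ge 0$, to absorb the excess multiplicity:
\[
n \le m^{s^*}(t,q)+\alpha \le m^{s-1-j}(t,q)+j+d^\perp \le m^{s-1}(t,q)+d^\perp,
\]
which is the assertion. (Here $s^*\ge 0$ forces $j\le s-1$, so every index above is a legitimate nonnegative defect.)

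The one delicate point is the case $\alpha>d^\perp$, where more points of $\G$ lie on $\Lambda$ than the minimal dependent set $\mathcal{Q}$ accounts for: the naive estimate $m^{s^*}(t,q)+\alpha$ then carries an oversized additive term $\alpha$. The resolution is that each extra point on $\Lambda$ lowers the quotient defect by one, and the strict monotonicity $m^{s}(t,q)<m^{s+1}(t,q)$ (a gain of at least one unit of length per unit of defect) exactly compensates, so the trade-off is nonnegative and the clean bound with $d^\perp$ survives. I would flag this balancing as the crux; the remaining steps are routine bookkeeping with the quotient parameters.
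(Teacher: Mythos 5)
Your proof is correct, but it executes the shortening differently from the paper, so a comparison is worthwhile. The paper splits into two cases: if $d^\perp=2$ the code is non-projective and Theorem~\ref{thm-Projective} applies at once; if $d^\perp\ge 3$ it quotients not by the full flat $\Lambda=\langle\mathcal{Q}\rangle$ but by a $(d^\perp-3)$-flat $\lambda'$ spanned by $d^\perp-2$ points of $\G\cap\Lambda$, producing a code of dimension $t+1$ in which the image of $\Lambda$ is a point of multiplicity at least $2$, and then invokes the non-projectivity bound of part~\ref{part: 3 thm proj} of Theorem~\ref{thm-Projective} to get $n-d^\perp+2\le m^{s-1}(t,q)+2$. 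You instead quotient by $\Lambda$ itself, landing directly in dimension $t$, and replace the appeal to Theorem~\ref{thm-Projective} with the defect bookkeeping $s^*\le s-\alpha+\ell+1$ from Proposition~\ref{prop: quotient properties} together with the iterated strict monotonicity $m^{a}(t,q)+b\le m^{a+b}(t,q)$ to absorb any excess points of $\G$ lying on $\Lambda$. Your route buys a uniform treatment (no case split at $d^\perp=2$, where $\Lambda$ degenerates to a repeated point) and independence from the projectivity theorem. The paper's route buys a cleaner handling of the excess-point issue you flag as the crux: since any $d^\perp-1$ points of $\G$ are affine independent, $\lambda'$ contains \emph{exactly} $d^\perp-2$ points of $\G$, so no compensation argument is needed there --- extra points of $\G$ on $\Lambda$ merely inflate the multiplicity of a single quotient point and are harmless to the non-projectivity argument. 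Both arguments are sound, and yours is arguably the more streamlined of the two.
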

	\begin{proof}
		Note that since $t>1$, $s>0$. Let $\G$ be an $[n,k,d]_q$ \AS code. If $d^\perp=2$ then $t=k-1$ and $\G$ is not projective, so the result follows from Theorem \ref{thm-Projective}.  Let $d^\perp \ge 3$, so that in particular, $\G$ is projective. There exist $d^\perp$ points of $\G$ incident with a common $(d^\perp-2)$-flat, $\lambda$, and any $d^\perp-1$ or fewer points of $\G$ are independent. Consider a $(d^\perp-3)$-flat, $\lambda'$  spanned by $d^\perp-2$ points of $\G\cap \lambda$. Taking the quotient by $\lambda'$ yields  an $[n-d^\perp+2,k-d^\perp+2,d^*]_q$ A$^{s^*}$MDS code, $\G^*$, with  $s^*\le s$ (Prop. \ref{prop: quotient properties}). Since the point in the quotient corresponding to $\lambda$ has multiplicity at least $2$, $\G^*$ is not projective, consequently (Theorem \ref{thm-Projective}),   $n-d^\perp+2\le m^{s^*-1}(k-d^\perp+1,q)+2\le m^{s-1}(k-d^\perp+1,q)+2$.
	\end{proof}

	From Lemma \ref{lem: bounds on length of AsMDS},  if $t\ge 3$ and $s>0$ then $m^{s-1}(t,q)\le m^{s-1}(3,q)+t-3$. This bound may in turn be leveraged by bounds on $(n,s+1)$-arcs in the plane, such as those in Lemmas \ref{lem: Barlotti 3d}, and \ref{lem: s-arcs in the plane}.
	Specializing to the case $s=1$: if $t\ge 2$ then $m(t,q)\le m(2,q)+t-2=q+t-1$. From part 1 of Lemma \ref{lem: trivial bounds}, the dual of any 2-dimensional non-MDS code is AMDS. Furthermore, if $t\ge q$, then as observed in Section \ref{subsec: MDS Codes}, $m(t,q)=t+1$.  We thus have the following corollary, the second part of which appears as Theorem 3.4 of \cite{MR1358272}.

	\begin{corollary} \label{cor: bound s = 1 t>1}  \leavevmode \vspace*{-2mm}
		\begin{enumerate}
			\item If $t\ge 3$ and $s>0$, then $m^s_t(k,q)\le m^{s-1}(3,q)+k-2 $
			\item If $k,t\ge 2$, then  $m^1_t(k,q)\le q+k$.
			\item If $t\ge q$ then  $m^1_t(k,q)\le k+2$.
		\end{enumerate}
	\end{corollary}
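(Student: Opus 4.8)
The plan is to specialize Lemma~\ref{lem: bounds based on d perp}, which for $k\ge 3$ and $t>1$ supplies the master inequality
\[
m^s_t(k,q)\le m^{s-1}(t,q)+k-t+1,
\]
and then to bound the term $m^{s-1}(t,q)$ by a known quantity in each of the three regimes; each part then becomes a one-line substitution in which the appearances of $t$ cancel. First I would record that the hypothesis $t\ge 2$ (present in all three parts, since $t\ge q\ge 2$ in part~3) already guarantees that the lemma is applicable: for a non-degenerate code $d^\perp\ge 2$, equivalently $t\le k-1$, so $t\ge 2$ forces $k\ge 3$. This also shows that the excluded dimension $k=2$ contributes nothing, in agreement with Lemma~\ref{lem: trivial bounds}(\ref{part 1 lem: trivial bounds}), whereby the dual of a $2$-dimensional non-MDS code is always AMDS and hence never attains $t\ge 2$.

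For part~1, with $t\ge 3$ and $s>0$, I would feed the dimension-reduction bound Lemma~\ref{lem: bounds on length of AsMDS}(\ref{part: 3 bounds on length of AsMDS}), namely $m^{s-1}(t,q)\le m^{s-1}(3,q)+t-3$, into the master inequality. The two occurrences of $t$ cancel, leaving $m^s_t(k,q)\le m^{s-1}(3,q)+(t-3)+(k-t+1)=m^{s-1}(3,q)+k-2$, as claimed. Note that $t\ge 3$ is exactly the condition that licenses the reduction down to dimension $3$.

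For parts~2 and~3 I would set $s=1$, so that $m^{s-1}(t,q)=m(t,q)$ is an MDS length. For part~2, combining $m(2,q)=q+1$ with Lemma~\ref{lem: bounds on length of AsMDS}(\ref{part: 3 bounds on length of AsMDS}) gives $m(t,q)\le q+t-1$, and substitution yields $m^1_t(k,q)\le (q+t-1)+(k-t+1)=q+k$. For part~3, since $t\ge q$ I would invoke Bush's theorem $m(t,q)=t+1$ (Section~\ref{subsec: MDS Codes}), obtaining $m^1_t(k,q)\le (t+1)+(k-t+1)=k+2$.

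Since every ingredient is already in place, there is no serious obstacle here; the work is essentially bookkeeping. The only points requiring a moment's care are confirming that each substituted estimate on $m^{s-1}(t,q)$ is valid in the stated range of $t$ (that $t\ge 3$ underlies part~1 and that $t\ge q$ is precisely the hypothesis of Bush's theorem in part~3), and verifying that $t>1$ holds throughout so that the master inequality of Lemma~\ref{lem: bounds based on d perp} is genuinely available.
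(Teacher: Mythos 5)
Your proposal is correct and follows essentially the same route as the paper: the paper likewise specializes the master inequality of Lemma~\ref{lem: bounds based on d perp}, bounding $m^{s-1}(t,q)$ via Lemma~\ref{lem: bounds on length of AsMDS}(\ref{part: 3 bounds on length of AsMDS}) for part~1, via $m(t,q)\le m(2,q)+t-2=q+t-1$ for part~2, and via Bush's theorem $m(t,q)=t+1$ for part~3, with the $k=2$ case of part~2 dispatched exactly as you do through Lemma~\ref{lem: trivial bounds}(\ref{part 1 lem: trivial bounds}). Your additional observation that $t\ge 2$ forces $k\ge 3$ for non-degenerate codes is a sound (and slightly more explicit) justification of the lemma's applicability.
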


	\begin{remark}\label{rem: MDS Conj and AMDS t>1}
		With reference to Lemma \ref{lem: Bounds on MDS codes}, it is often the case that $m(t,q)=q+1$.  In such cases, the above corollary provides $m^1_t(k,q)\le q+k+2-t$ when $t>1$.
	\end{remark}

	\subsection{Bounds on \AS codes}

	It is noteworthy that $S(C^\perp) =t > 1$ if and only if $d^\perp < k$. Thus, for $k = 3$, all projective codes are either MDS or dual to an AMDS code. For dimensions $k > 3$, there exist projective codes that are neither MDS nor dual to an AMDS code. However, such codes are shown to necessarily be short, indicating that for dimensions three or greater, the longest codes are typically dual to AMDS codes.
	Ball and Hirschfeld observed in \cite{MR2158768} that, in most cases, no examples exist of  $(n, r)$-arcs in $\text{PG}(2, q)$ with a large $n/q$ ratio, specifically $n/q > r - 2$. The following demonstrates that for dimensions $k > 3$, the longest codes that are neither AMDS nor dual to an AMDS code generally correspond (by shortening) to $(n, r)$-arcs in $\text{PG}(2, q)$ with $n \leq (r - 2)q + r$.

	\begin{remark}
		Recall that an $[n,k,d]_q$ \AS code $\G$  is degenerate if and only if $d^\perp=1$, equivalently $n=k^\perp+t$.  Likewise $\G^\perp $ is an $[n,k^\perp,d^\perp]_q$ A$^t$MDS code, and is degenerate if and only if $d=1$, in which case $n=k+s$. Thus, in our search for long non-degenerate codes we generally dismiss the case $d=1$, or equivalently, consider only $s<n-k$.
	\end{remark}

	\begin{theorem}\label{thm: ub ASMDS}
		Let $\G$ be an  $[n,k,d]_q$ \AS code, with  $\G^\perp$ an $[n,k^\perp,d^\perp]_q$ A$^t$MDS code, $d>1$, and $s, t\ge 1$.
		\begin{enumerate}
			\item  \begin{enumerate}
				\item If $t>1$ then $n\le s(q+1)+k-1$. \label{part: thm: ub ASMDS 1a }
				\item If $s>1$ then $k\le t(q+1)-1$. \label{part: thm: ub ASMDS 1b }
			\end{enumerate} \label{part: 1 thm ub ASMDS}
			\item If $t=1$ and $\G$ has a codeword of weight $d+s+1-\alpha$ for some $0\le \alpha\le s+1$ then $n\le q(s+1)+k-2+\alpha$. \label{part: 2 thm ub ASMDS}
			\item If $s=1$ then $n\le (t+1)(q+1)+k^\perp -2$, or equivalently, $k\le (t+1)(q+1)-2$. \label{part: 3 thm ub ASMDS}
			\item If $s=t=1$, $q>3$, and $k\ge 3,$  then $n\le 2q+k-2$, and if $n>k+2$ then $k\le 2q-2$. \label{part: 4 thm ub ASMDS}
			\item If $t=1$ then $k+s-1\le m(k-1,q)$, equivalently $n\le m(k-1,q) +k^\perp -s+1$.\label{part: 5 thm ub ASMDS}
			\item If $s=1$ then $k^\perp+t-1\le m(k^\perp-1,q)$, so $n\le m(k^\perp-1,q)+k-t+1$. \label{part: 6 thm ub ASMDS}
			\item If $s=t=1$ then $k\le m(k-1,q)$; $k^\perp \le m(k^\perp-1,q)$. \label{part: 7 thm ub ASMDS}
			\item $k\le (t+1)(q+1)-2$. \label{part: 8 thm ub ASMDS}
		\end{enumerate}
	\end{theorem}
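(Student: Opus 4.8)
The plan is to dispose of this bound by a simple case split on the value of $s$, leaning entirely on two earlier parts of the same theorem rather than on any fresh geometric argument. Since the standing hypotheses already guarantee $s\ge 1$, every code under consideration falls into exactly one of the two cases $s=1$ or $s>1$, and I would argue each separately.

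First I would treat the case $s=1$. Here part \ref{part: 3 thm ub ASMDS} applies verbatim and yields $k\le (t+1)(q+1)-2$, which is precisely the claimed bound, so no additional work is required. Next I would handle $s>1$, where part \ref{part: thm: ub ASMDS 1b } supplies the sharper estimate $k\le t(q+1)-1$. To conclude in this case I would record the elementary identity $(t+1)(q+1)-2 = t(q+1)-1 + q$, from which $k\le t(q+1)-1 \le (t+1)(q+1)-2$ for every admissible $q$. Combining the two cases then gives the stated inequality in full generality, with $s$ absent from the final bound exactly because the $s>1$ estimate is already stronger than the target.

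The only step that demands any attention — and it is the \emph{main obstacle} in a purely nominal sense — is checking that the bound of part \ref{part: thm: ub ASMDS 1b } is weaker than (hence implies) the desired one; this reduces to the single arithmetic comparison $t(q+1)-1\le (t+1)(q+1)-2$, settled by the identity above. I therefore expect the present statement to be a direct corollary of parts 1(b) and 3, requiring no geometric input beyond what those parts already furnish.
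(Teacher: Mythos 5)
Your proposal is correct and is essentially the paper's own proof: the paper disposes of this part in one line, ``follows from parts 1 and 3,'' which is exactly the case split you carry out ($s=1$ via part 3, $s>1$ via part 1(b) plus the comparison $t(q+1)-1\le (t+1)(q+1)-2$). The only difference is that you make explicit the arithmetic step the paper leaves implicit.
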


	\begin{remark}
		Part \ref{part: thm: ub ASMDS 1a }: cf.\ Theorem \ref{long AsMDS dual to AMDS}; in the case $t=2$, cf.\ Corollary 1 in \cite{Tong2012};\\
		Corollary \ref {cor: FW-k-bounds} follows from parts \ref{part: 1 thm ub ASMDS} and \ref{part: 3 thm ub ASMDS};\\
		Part \ref{part: 6 thm ub ASMDS}: cf.\ Theorem 11 in \cite{MR1409442}.
	\end{remark}
	\begin{proof}
		Before proceeding with the proof, we first note that $d^\perp = k+1-t$, and from equation (\ref{eqn: Proj Sys dual distance}) it follows that  $k+1-t$ points of $\G$ are incident with a common $(k-1-t)$-flat, $\Omega$,  and any $k-t$ points of $\G$ are  independent.
		Let $H$ be a secant of $\G$, so $H$ is a hyperplane of $PG(k-1,q)$ with $|H\cap \G|= k-1+s$, and no hyperplane meets $\G$ in as many as $k+s$ points.
		Part \ref{part: 1 thm ub ASMDS}: If $t>1$ then $d^\perp\le k-1$, so $k\ge 3$, so Lemma \ref{lem: bounds based on d perp} (which gives $m^s_t(k,q)\le m^{s-1}(t,q)+k-t+1$) and part \ref{part: 3 bounds on length of AsMDS} of Lemma \ref{lem: bounds on length of AsMDS} (which gives $m^{s-1}(t,q)\le m^{s-1}(2,q)+t-2$) give
		\[
		m^s_t(k,q)\le m^{s-1}(2,q)+k -1 = s(q+1)+k-1.
		\]
		Similarly, if $s>1$ then by working with $\G^\perp$ and the fact that $d>1$ provides $k^\perp\ge 3$,  we obtain $n\le t(q+1)+k^\perp -1$, so
		\begin{equation*} \label{eqn: max length ASMDS t}
			k\le t(q+1)-1.
		\end{equation*}
		Part \ref{part: 2 thm ub ASMDS}: If $t=1$ then $d^\perp=k$, so any $ (k-1)$-subset of $\G$ is  independent. $\G$ has a codeword of weight $d+s+1-\alpha$ for some $0\le \alpha\le s+1$ if and only if there exists some hyperplane $\Pi$ meeting $\G$ in precisely $k-2+\alpha$ points. Let $\Omega$ be the span of $ k-2 $ of these points.  Each hyperplane through $\Omega$ meets $\G$ in at most $s+1$ further points, giving
		\begin{equation} \label{eqn: bound on wt spectrum}
			n-k+2\le (s+1)(q) + \alpha.
		\end{equation}

		Part \ref{part: 3 thm ub ASMDS} follows from Part \ref{part: 2 thm ub ASMDS}, and the existence of a word of weight $d^\perp$ in $\G^\perp$.
		Part \ref{part: 4 thm ub ASMDS}: If $t=s=1$ then part \ref{part 2 cor: from Barlotti} of Corollary \ref{cor: from Barlotti} gives $n\le 2q+k-2$, and $n> k+2$ gives $k^\perp \ge 3$, whence $n\le 2q+k^\perp-2$.
		Part \ref{part: 5 thm ub ASMDS}: If $t=1$  then $H\cap \G$ is a set of  $k+s-1$ points such that every $(k-1)$-subset is  independent. In other words $H\cap \G$ is a $(k+s-1, k-2)$-arc in PG$(k-2,q)$.   Reasoning for Part \ref{part: 6 thm ub ASMDS}  is entirely similar, and applied to $\G^\perp$. Part \ref{part: 7 thm ub ASMDS} follows from parts \ref{part: 5 thm ub ASMDS} and \ref{part: 6 thm ub ASMDS}.
		Part \ref{part: 8 thm ub ASMDS} follows from parts \ref{part: 1 thm ub ASMDS} and \ref{part: 3 thm ub ASMDS}.
	\end{proof}

	Note that codes that achieve equality in part \ref{part 1 cor: from Barlotti} of Corollary \ref{cor: from Barlotti} have no codewords of weight $d+1$, whereas codes dual to those meeting the bound in part \ref{part: 8 thm ub ASMDS} of Theorem \ref{thm: ub ASMDS} have no codewords of weight $d^\perp +1$.

	\begin{corollary}\label{cor: AMDS k bound}  Let $\G$ be an $[n,k,d]_q$ \AS code, with  $\G^\perp$ an $[n,k^\perp,d^\perp]_q$ A$^t$MDS code,  $s, t\ge 1$.
		\begin{enumerate}
			\item If $n>s(q+1)+k-1$, then $s\le m(k-1,q)-k+1$. \label{part 1: cor: AMDS k bound}
			\item If $k> q$ and $s>1$, then  $n\le s(q+1)+k-1$. \label{part 2: cor: AMDS k bound}
			\item If $s>1$ and $t=1$ then $n\le q+k^\perp$, so $k\le q$. \label{part 3: cor: AMDS k bound}
			\item If $t>1$ and $s=1$ then $n\le q+k$, so $k^\perp \le q$. \label{part 4: cor: AMDS k bound}
		\end{enumerate}
	\end{corollary}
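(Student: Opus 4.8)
The plan is to read off all four statements from the immediately preceding Theorem~\ref{thm: ub ASMDS}, using nothing more than the identity $n=k+k^\perp$ to pass between its length bounds and its dimension bounds. Since $s,t\ge 1$ (and, per the standing convention, $d>1$; the degenerate case $d=1$ forces $n=k+s$ and renders each inequality below either vacuous or immediate), the theorem is available throughout. For part~\ref{part 1: cor: AMDS k bound} I would start from the contrapositive of part~\ref{part: thm: ub ASMDS 1a } of Theorem~\ref{thm: ub ASMDS}: the hypothesis $n>s(q+1)+k-1$ is incompatible with $t>1$, so $t=1$. Inserting $t=1$ into part~\ref{part: 5 thm ub ASMDS} of Theorem~\ref{thm: ub ASMDS} gives $k+s-1\le m(k-1,q)$, which rearranges to the claimed $s\le m(k-1,q)-k+1$.

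Part~\ref{part 2: cor: AMDS k bound} carries the only genuine content, and I would treat it by a dichotomy on $t$. Since $s>1$, part~\ref{part: thm: ub ASMDS 1b } of Theorem~\ref{thm: ub ASMDS} already gives $k\le t(q+1)-1$; were $t=1$ this would read $k\le q$, contradicting the hypothesis $k>q$. Hence $t>1$, so part~\ref{part: thm: ub ASMDS 1a } of Theorem~\ref{thm: ub ASMDS} applies and yields $n\le s(q+1)+k-1$. (Equivalently, one may argue from part~\ref{part 1: cor: AMDS k bound}: if $n>s(q+1)+k-1$ then $t=1$ and $s\le m(k-1,q)-k+1$, but $k-1\ge q$ combined with Bush's equality $m(k-1,q)=k$ \cite{Bush1952} forces $s\le 1$, contradicting $s>1$.)

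Parts~\ref{part 3: cor: AMDS k bound} and~\ref{part 4: cor: AMDS k bound} are then one-line specializations that are dual to each other. For part~\ref{part 3: cor: AMDS k bound}, the hypotheses $s>1$ and $t=1$ feed into part~\ref{part: thm: ub ASMDS 1b } of Theorem~\ref{thm: ub ASMDS} to give $k\le t(q+1)-1=q$; adding $k^\perp$ to both sides and invoking $n=k+k^\perp$ turns this into $n\le q+k^\perp$. Part~\ref{part 4: cor: AMDS k bound} is the same statement read off for $\G^\perp$: with $s=1$ and $t>1$, part~\ref{part: thm: ub ASMDS 1a } of Theorem~\ref{thm: ub ASMDS} gives $n\le s(q+1)+k-1=q+k$, whence $k^\perp=n-k\le q$. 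The whole proof is bookkeeping on top of Theorem~\ref{thm: ub ASMDS}; the single step that is not a direct quotation is the elimination of the case $t=1$ in part~\ref{part 2: cor: AMDS k bound}, which I expect to be the main obstacle and which is settled either by the structural bound of part~\ref{part: thm: ub ASMDS 1b } or by Bush's exact value of $m(k-1,q)$ when $k-1\ge q$.
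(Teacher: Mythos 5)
Your proof is correct and takes essentially the same route as the paper's: the paper likewise obtains part \ref{part 1: cor: AMDS k bound} from parts \ref{part: 1 thm ub ASMDS} and \ref{part: 5 thm ub ASMDS} of Theorem \ref{thm: ub ASMDS}, part \ref{part 2: cor: AMDS k bound} (after disposing of $d=1$) from the same dichotomy on $t$ using parts \ref{part: thm: ub ASMDS 1a } and \ref{part: thm: ub ASMDS 1b }, and parts \ref{part 3: cor: AMDS k bound} and \ref{part 4: cor: AMDS k bound} as direct specializations of part \ref{part: 1 thm ub ASMDS}. If anything, your handling of the degenerate case $d=1$ is more explicit than the paper's, which addresses it only for part \ref{part 2: cor: AMDS k bound}.
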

	\begin{proof}
		Part \ref{part 1: cor: AMDS k bound} follows from parts \ref{part: 1 thm ub ASMDS} and \ref{part: 5 thm ub ASMDS} of Theorem \ref{thm: ub ASMDS}. Part \ref{part 2: cor: AMDS k bound} clearly holds if $d=1$, and if $d>1$ then the result  follows from part 1 of Theorem  \ref{thm: ub ASMDS}. Parts  \ref{part 3: cor: AMDS k bound} and \ref{part 4: cor: AMDS k bound} are particular cases of part \ref{part: 1 thm ub ASMDS} of Theorem  \ref{thm: ub ASMDS}.
	\end{proof}

	Let $C$ be an \AS code, where $k\ge3$, $q>3$, and suppose $C^\perp$ is AMDS. By part \ref{part: 4 thm ub ASMDS} of Theorem \ref{thm: ub ASMDS}, if $s=1$ then $k\le 2q-2$, and if $s>1,$ then by part \ref{part: 1 thm ub ASMDS}, $k\le q$.  Thus, by part \ref{part 1 cor: from Barlotti} of Corollary \ref{cor: from Barlotti} we obtain the following.
	\begin{corollary} \label{cor: AMDS no k}
		If  $k\ge 3$, and $q>3$, then \[m^s_1(k,q)\le \left\{\begin{array}{ll}
			(s+3)(q+1)-6, & \text{ if } s=1\\
			(s+2)(q+1)-3, & \text{ if } s>1.
		\end{array}\right.\]
	\end{corollary}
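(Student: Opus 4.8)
The strategy is to let the hypothesis that $\G^\perp$ be AMDS ($t=1$) first pin down the dimension $k$, and then to substitute this into the planar-arc length estimates of Corollary \ref{cor: from Barlotti}. I work throughout under $d>1$, the blanket hypothesis inherited from Theorem \ref{thm: ub ASMDS}; the excluded case $d=1$ makes $\G^\perp$ degenerate and $\G$ the short code of length $n=k+s$, which is not at issue here.

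For the dimension bound, if $s>1$ then Theorem \ref{thm: ub ASMDS}(\ref{part: 1 thm ub ASMDS}) applied with $t=1$ gives $k\le t(q+1)-1=q$. If instead $s=1$ (so $s=t=1$), Theorem \ref{thm: ub ASMDS}(\ref{part: 4 thm ub ASMDS}) gives $k\le 2q-2$ as soon as $n>k+2$; in the complementary range $n\le k+2$ one has $k^\perp\le 2$, so $\G^\perp$ is a two-dimensional AMDS code and hence $n\le m^1(2,q)=2q+2$ by Lemma \ref{lem: trivial bounds}(\ref{part 5 lem: trivial bounds}), already below the target $(s+3)(q+1)-6=4q-2$.

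Now I substitute. For $s=1$ the basic bound $m^s(k,q)\le(s+1)(q+1)+k-2$ of Corollary \ref{cor: from Barlotti}(\ref{part 1 cor: from Barlotti}), evaluated at $k\le 2q-2$, gives precisely $2(q+1)+(2q-2)-2=4q-2=(s+3)(q+1)-6$. For $s>1$ the same bound at $k\le q$ yields only $(s+1)(q+1)+q-2=(s+2)(q+1)-3$; to gain the final $2$ and reach the claimed $(s+2)(q+1)-5$ I would instead use the improved estimate $m^s(k,q)\le(s+1)(q+1)+k-4$ of Corollary \ref{cor: from Barlotti}(\ref{part 2 cor: from Barlotti}), which at $k\le q$ gives exactly $(s+1)(q+1)+q-4=(s+2)(q+1)-5$.

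The main obstacle is therefore legitimizing the sharper estimate in the $s>1$ case, since Corollary \ref{cor: from Barlotti}(\ref{part 2 cor: from Barlotti}) carries the hypotheses $0<s<q-2$ and $(s+2,q)\ne(2^e,2^h)$. The inequality $s<q-2$ I would obtain from Theorem \ref{thm: ub ASMDS}(\ref{part: 5 thm ub ASMDS}): with $t=1$ one has $k+s-1\le m(k-1,q)$, and for $k\ge 5$ the MDS bound $m(k-1,q)\le q+k-4$ of Lemma \ref{lem: Bounds on MDS codes}(\ref{part 3: lem: Bounds on MDS codes}) forces $s\le q-3$, the few small dimensions $k\in\{3,4\}$ being disposed of directly from the planar bounds of Lemma \ref{lem: Barlotti 3d}. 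The genuinely delicate point is the parity restriction $(s+2,q)\ne(2^e,2^h)$: in the even-characteristic configurations where both $s+2$ and $q$ are powers of two the improved bound is simply unavailable, and I expect these maximal-arc cases either to require a separate argument or to appear as an explicit exception to the clean statement.
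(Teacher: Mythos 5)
Your strategy coincides with the paper's: derive the dimension bound from Theorem \ref{thm: ub ASMDS} ($k\le 2q-2$ when $s=t=1$ via part \ref{part: 4 thm ub ASMDS}, and $k\le q$ when $s>1,\ t=1$ via part \ref{part: 1 thm ub ASMDS}), then substitute into Corollary \ref{cor: from Barlotti}. Your $s=1$ case is complete and matches the paper's bound $(s+3)(q+1)-6=4q-2$ exactly; indeed you are more careful than the paper, which invokes $k\le 2q-2$ without addressing the side condition $n>k+2$, a range you dispose of via Lemma \ref{lem: trivial bounds}(\ref{part 5 lem: trivial bounds}).

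The discrepancy you found in the $s>1$ case is real, and the fault lies with the paper, not with you. The paper's own proof cites part \ref{part 1 cor: from Barlotti} of Corollary \ref{cor: from Barlotti}, which at $k\le q$ yields only $(s+1)(q+1)+q-2=(s+2)(q+1)-3$; the printed constant $-5$ is exactly what part \ref{part 2 cor: from Barlotti} would give, and its hypotheses $0<s<q-2$ and $(s+2,q)\ne(2^e,2^h)$ are nowhere verified. They cannot be verified in general: the statement itself fails in a Denniston case. Take $q=4$, $k=3$, $s=2$, and let $\G$ be the complement of a line in PG$(2,4)$, i.e.\ the maximal $(16,4)$-arc AG$(2,4)$. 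This is a $[16,3,12]_4$ code with $S(\G)=16-3+1-12=2$; every line meets $\G$ in $0$ or $4$ points, so $d^\perp=3$ and $S(\G^\perp)=16-13+1-3=1$, whence $t=1$ and $m^2_1(3,4)\ge 16$, while the corollary claims $m^2_1(3,4)\le(s+2)(q+1)-5=15$. So for $s>1$ only the constant $-3$ is provable without further hypotheses (and that is all that either the paper's argument or the unconditional part of yours establishes); the constant $-5$ requires excluding the configurations $(s+2,q)=(2^e,2^h)$. Within your proposed repair, the derivation of $s\le q-3$ from Theorem \ref{thm: ub ASMDS}(\ref{part: 5 thm ub ASMDS}) and Lemma \ref{lem: Bounds on MDS codes}(\ref{part 3: lem: Bounds on MDS codes}) is correct for $k\ge 5$, but your hope that $k\in\{3,4\}$ can be ``disposed of directly'' by Lemma \ref{lem: Barlotti 3d} cannot be realized: those planar bounds carry exactly the same Denniston exclusions, and $k=3$ is where the counterexample lives. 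In short, your proof is not missing an idea; the gap you flagged is an error in the paper's statement and proof.
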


	\begin{lemma} \label{lem: fat k-3 flat short code}
		If $\G$ is an $[n,k,d]_q$ \AS code and a $(k-3)$-flat $\Lambda$ meets $\G$ in $k-2+a$ points, $a\ge 0$, then
		\[
		n\le (s+1-a)(q+1)+ k-2+a
		\]
	\end{lemma}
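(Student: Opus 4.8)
The plan is to shorten $\G$ through the flat $\Lambda$ exactly as in Proposition~\ref{prop: quotient properties}, and then count. Since $\Lambda$ is a $(k-3)$-flat in $\Sigma=\text{PG}(k-1,q)$, it has codimension $r=2$, so the quotient geometry $\Sigma^*$ taken at $\Lambda$ satisfies $\Sigma^*\cong \text{PG}(1,q)$, a projective line carrying exactly $q+1$ points. By the description of the quotient, these $q+1$ points of $\Sigma^*$ are in bijection with the $q+1$ hyperplanes $H_1,\ldots,H_{q+1}$ of $\Sigma$ that contain $\Lambda$ (the $(k-2)$-flats through $\Lambda$).

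First I would record how $\G$ distributes off $\Lambda$. Writing $\alpha=|\Lambda\cap\G|=k-2+a$, every point $Q\in\G$ with $Q\notin\Lambda$ spans together with $\Lambda$ a unique hyperplane $\langle\Lambda,Q\rangle$, which is one of the $H_i$. Hence the $n-\alpha$ points of $\G$ lying off $\Lambda$ are distributed, without overlap, among the $q+1$ hyperplanes through $\Lambda$; equivalently, the multiplicity assigned to the $i$-th point of $\Sigma^*$ is $\mu_i=|H_i\cap\G|-\alpha$, and $\sum_{i=1}^{q+1}\mu_i=n-\alpha$.

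Next I would apply the $\text{A}^s\text{MDS}$ secant bound. Since $\G$ has Singleton defect $s$, no hyperplane meets $\G$ in more than $n-d=k-1+s$ points, so for each $i$,
\[
\mu_i=|H_i\cap\G|-\alpha\le (k-1+s)-(k-2+a)=s+1-a.
\]
Summing over the $q+1$ hyperplanes through $\Lambda$ gives $n-\alpha=\sum_{i}\mu_i\le (q+1)(s+1-a)$, that is,
\[
n\le (s+1-a)(q+1)+k-2+a,
\]
as claimed.

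I expect no serious obstacle here; the only point to keep in mind is the consistency of the hypothesis. Since every hyperplane through $\Lambda$ contains all of $\Lambda\cap\G$, the secant bound already forces $k-2+a\le k-1+s$, i.e. $a\le s+1$; when $a>s+1$ no such flat $\Lambda$ can exist, so the statement is vacuous, while the displayed chain of inequalities remains valid in any case. The only care required is the correct bookkeeping of multiplicities in the (multi)set $\G$, which the quotient construction of Proposition~\ref{prop: quotient properties} handles automatically.
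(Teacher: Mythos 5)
Your proof is correct and is essentially the paper's own argument: the paper also counts over the pencil of $q+1$ hyperplanes through $\Lambda$, using the secant bound $|H\cap\G|\le n-d=k+s-1$ to get $n\le (k-2+a)+(q+1)(s+1-a)$, and your quotient-geometry framing is just a repackaging of that same count. The extra remark that $a\le s+1$ is forced (so the bound is never vacuously negative in a problematic way) is a fine observation but not needed.
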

	\begin{proof}
		Each hyperplane meets $\G$ in at most $n-d=k+s-1$ points of $\G$. Thus, by considering the pencil of hyperplanes through $\Lambda$ we obtain
		\[
		n\le k-2+a+(q+1)\bigl(k+s-1-(k-2+a)\bigr)
		\]
	\end{proof}
	As an immediate consequence, we have the following bound for non-projective codes.
	\begin{corollary}
		If $\G$ is an $[n,k,d]_q$ \AS  code where $\mu(P_i)=1+x_i$, $1\le i \le k-2$, then
		\[
		n\le (s+1-a)(q+1)+ k-2+a,
		\]
		where $a=\sum_{i=1}^{k-2}x_i$.
	\end{corollary}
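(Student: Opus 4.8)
The plan is to realize the hypothesis of Lemma~\ref{lem: fat k-3 flat short code} by using the $k-2$ distinguished points to build a $(k-3)$-flat that is heavily met by $\G$. First I would observe that any $k-2$ points of $\Sigma=\mathrm{PG}(k-1,q)$ span a flat of projective dimension at most $k-3$; hence $P_1,\dots,P_{k-2}$ are contained in some $(k-3)$-flat $\Lambda$. If their span has dimension strictly less than $k-3$, one simply enlarges it to a $(k-3)$-flat, an operation that can only add further points of $\G$ to $\Lambda$.

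Next, counting multiplicities, $\Lambda$ meets $\G$ in at least
\[
\sum_{i=1}^{k-2}\mu(P_i)=\sum_{i=1}^{k-2}(1+x_i)=(k-2)+a
\]
points. Writing $|\Lambda\cap\G|=(k-2)+a'$ with $a'\ge a\ge 0$, Lemma~\ref{lem: fat k-3 flat short code} applied to $\Lambda$ yields $n\le (s+1-a')(q+1)+k-2+a'$.

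Finally, I would close the argument by monotonicity. Setting $g(t)=(s+1-t)(q+1)+k-2+t=(s+1)(q+1)+k-2-qt$, the function $g$ is strictly decreasing in $t$ since $q\ge 1$, so $a'\ge a$ gives
\[
n\le g(a')\le g(a)=(s+1-a)(q+1)+k-2+a,
\]
which is the claimed bound.

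This is essentially a bookkeeping consequence of the preceding lemma, so I do not anticipate a genuine obstacle. The only points requiring care are ensuring that the chosen points genuinely lie in a $(k-3)$-flat (handled by the span/enlargement remark) and handling the possibility that $\Lambda$ meets $\G$ in more than $(k-2)+a$ points beyond $P_1,\dots,P_{k-2}$ (handled by the monotonicity step, since extra incidences only sharpen the bound rather than violate it).
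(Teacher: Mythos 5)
Your proof is correct and follows exactly the route the paper intends: the corollary is stated there without proof as an ``immediate consequence'' of Lemma~\ref{lem: fat k-3 flat short code}, the implicit argument being precisely yours---the $k-2$ multiple points span a flat that extends to a $(k-3)$-flat meeting $\G$ in at least $k-2+a$ points (with multiplicity), after which the lemma applies. Your added care about the span possibly having smaller dimension and about the monotonicity in $a$ (since $g(t)=(s+1)(q+1)+k-2-qt$ is decreasing) fills in the bookkeeping the paper omits.
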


	The next several corollaries of Lemma \ref{lem: fat k-3 flat short code} highlight that both codes of large minimum distance, and those of large defect must be short.

	\begin{corollary}\label{cor: big s short code}
		If $s\ge a (q+1)-1$, and $k\ge 3$, then  \[m^s(k,q)\le (s+1-a)(q+1)+k-2+a.\]
		In particular, if $s\ge q$, then  $m^s(k,q)\le s(q+1)+k-1$.
	\end{corollary}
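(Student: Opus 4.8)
The plan is to produce a single $(k-3)$-flat meeting $\G$ in many points and then feed it into Lemma~\ref{lem: fat k-3 flat short code}. Rewriting that lemma's bound as $(s+1-a)(q+1)+k-2+a=(s+1)(q+1)+k-2-aq$, we see it is strictly decreasing in $a$. Hence it suffices to exhibit \emph{one} $(k-3)$-flat $\Lambda$ with $|\Lambda\cap\G|\ge k-2+\alpha$ (i.e.\ $a\ge\alpha$): once we have it, Lemma~\ref{lem: fat k-3 flat short code} together with this monotonicity immediately yields $n\le(s+1-\alpha)(q+1)+k-2+\alpha$ for an arbitrary $[n,k,d]_q$ \AS code $\G$, and therefore the same bound on $m^s(k,q)$.

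First I would fix a secant hyperplane $H$, so $|H\cap\G|=k+s-1$, and work inside $H\cong\text{PG}(k-2,q)$. Choose a $(k-4)$-flat $\Pi$ spanned by $k-3$ affinely independent points of $H\cap\G$, and consider the pencil of $q+1$ hyperplanes of $H$ through $\Pi$; these are exactly the $(k-3)$-flats of $\Sigma$ containing $\Pi$. Since they partition the points of $H\cap\G$ lying off $\Pi$, some member $\Lambda$ of the pencil satisfies
\[
|\Lambda\cap\G|\ge |\Pi\cap\G|+\left\lceil\frac{|H\cap\G|-|\Pi\cap\G|}{q+1}\right\rceil.
\]
The right-hand side is non-decreasing in $|\Pi\cap\G|$, so it suffices to treat the smallest case $|\Pi\cap\G|=k-3$, which leaves $s+2$ points off $\Pi$. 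Here the hypothesis enters: $s\ge\alpha(q+1)-1$ is precisely $s+2>\alpha(q+1)$, whence $\lceil(s+2)/(q+1)\rceil\ge\alpha+1$ and $|\Lambda\cap\G|\ge(k-3)+(\alpha+1)=k-2+\alpha$, as needed.

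Two boundary situations must be cleared before the averaging applies. If $H\cap\G$ does not span $H$, then it lies in a $(k-3)$-flat meeting $\G$ in all $k+s-1$ of its points, so $a=s+1\ge\alpha$ and the monotone lemma bound applies directly; otherwise $H\cap\G$ furnishes the $k-3$ independent points required to build $\Pi$ (for $k=3$ the flat $\Pi$ is empty and the pencil is simply the $q+1$ points of the line $H$, so this step degenerates to a pigeonhole on point multiplicities). I expect the main obstacle to be the integrality bookkeeping in the second paragraph: one must verify that the ceiling genuinely forces $a\ge\alpha$ rather than merely $a\ge\alpha-1$, which is exactly why the threshold is stated as $s\ge\alpha(q+1)-1$. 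Finally, the ``in particular'' assertion is the specialization $\alpha=1$, for which $\alpha(q+1)-1=q$ and the bound reads $s(q+1)+k-1$.
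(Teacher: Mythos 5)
Your proposal is correct and takes essentially the same route as the paper: both proofs fix a secant hyperplane, pass to the pencil of $q+1$ $(k-3)$-flats through a $(k-4)$-flat spanned by points of $\G$ inside it, and use the resulting count to produce a $(k-3)$-flat meeting $\G$ in at least $k-2+\alpha$ points, which is then fed into Lemma~\ref{lem: fat k-3 flat short code}. The only organizational differences are that the paper runs the pencil count as a proof by contradiction (assuming no such fat flat exists) and treats $k=3$ as a separate case, whereas you argue directly by averaging and observe that $k=3$ degenerates to the same pigeonhole on point multiplicities.
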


	\begin{remark}
		Corollary \ref{cor: big s short code} immediately implies both Proposition \ref{prop: Tong binary} and part \ref{part: lem long bound P2} of Lemma \ref{lem: long bound}.
	\end{remark}

	\begin{proof}
		We shall deal with the cases $k=3$ and $k>3$ separately. Let $\G$ be an $[n,k,d]_q$ \AS code, $k>3$ and let $H$ be a secant of $\G$. If there exists a $(k-3)$-flat meeting $\G$ in at least $k-2+a$ points, then we are done (Lemma \ref{lem: fat k-3 flat short code}), so assume otherwise.  Let $\Omega\subseteq H $ be an $(k-4)$-flat with $|\Omega \cap \G| = k-3+x$. The assumption implies that $x<a$. By considering the respective intersections of $\G$ with the pencil of hyperplanes of $H$ through $\Omega$, we obtain
		\begin{align*}
			k+s-1& \le k-3+x+(q+1)\bigl(k-3-a-(k-3-x)\bigr) \\
			\Rightarrow\;\;\; & s \le a(q+1)-qx-2<a(q+1)-1
		\end{align*}
		giving a contradiction.
		If $k=3$ then a secant of $\G$ is a line $\ell$ meeting $\G$ in $s+2 \ge a(q+1)+1$ points. It follows that some point $P\in \ell$ has multiplicity $\mu(P)\ge a+1$, so $n\le a+1+ (q+1)(s+2-a+1)$.
	\end{proof}

	We may improve upon the bound in Corollary \ref{cor: big s short code} when $a=1$, provided $k$ and $q$ are such that $m(k-1,q)=q+1$ (as per the Main Conjecture on MDS Codes).
	\begin{corollary}\label{cor: MC big s short code}
		Let $k$ and $q$ be such that $m(k-1,q)=q+1$. If $s>q+2-k$, then $m^s(k,q)\le s(q+1)+k-1$.
		In particular, if $q$ is prime with $3\le k \le q$ then $m^s(k,q)\le s(q+1)+k-1$.
	\end{corollary}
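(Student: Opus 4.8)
The plan is to bound an arbitrary non-degenerate $[n,k,d]_q$ \AS code $\G$ by $s(q+1)+k-1$ and then take the maximum, so that $m^s(k,q)\le s(q+1)+k-1$. I would begin by extracting the arithmetic consequences of the hypotheses. Since $m(k-1,q)=q+1$ rules out $k\ge q+2$ (there Bush's theorem forces $m(k-1,q)=k\neq q+1$), we have $k\le q+1$, hence $q+2-k\ge 1$ and the assumption $s>q+2-k$ gives $s\ge 2$. This also dispenses with the trivial subcase $d=1$: there $n=k+s$ directly from the definition of the Singleton defect, and $k+s\le s(q+1)+k-1$ holds precisely because $sq\ge 1$. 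By Remark \ref{rem: degenerate can be ignored} it suffices to treat non-degenerate codes, so from now on I may assume $d>1$.

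With $d>1$ as standing hypothesis I would run a case analysis on $t=S(\G^\perp)$, which is exactly the setting in which Theorem \ref{thm: ub ASMDS} applies. If $t=0$ then $\G^\perp$ is MDS, hence $\G$ is MDS and $s=0$, contradicting $s\ge 2$. If $t\ge 2$, then part \ref{part: thm: ub ASMDS 1a } of Theorem \ref{thm: ub ASMDS} gives exactly $n\le s(q+1)+k-1$, the desired bound. The only remaining possibility is the delicate dual-AMDS case $t=1$.

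The heart of the argument, and the only point at which the MDS hypothesis is consumed, is the exclusion of $t=1$. Here part \ref{part: 5 thm ub ASMDS} of Theorem \ref{thm: ub ASMDS} applies and yields $k+s-1\le m(k-1,q)$. Substituting $m(k-1,q)=q+1$ gives $s\le q+2-k$, in direct contradiction with $s>q+2-k$. Thus $t=1$ cannot occur, only $t\ge 2$ survives, and that case already delivers the bound; taking the maximum over all admissible $\G$ completes the first statement. I do not anticipate a genuine obstacle: the whole content is the clean observation that the hypothesis $m(k-1,q)=q+1$ is exactly what is needed to suppress the one case ($t=1$) that Theorem \ref{thm: ub ASMDS} leaves unbounded for general parameters.

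For the ``in particular'' clause I would merely verify that the hypothesis $m(k-1,q)=q+1$ is automatic when $q$ is prime and $3\le k\le q$, using Lemma \ref{lem: Bounds on MDS codes}: part \ref{part 1: lem: Bounds on MDS codes} handles $k-1=2$, part \ref{part 2: lem: Bounds on MDS codes} handles $k-1=3$ (as $q$ is then odd), and part \ref{part 4: lem: Bounds on MDS codes} (with $p=q$) handles $4\le k-1\le q$. Since $k\le q$ forces $q+2-k\ge 2$, the condition $s>q+2-k$ is inherited, and the bound follows from the first part.
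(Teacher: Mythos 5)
Your proposal is correct and follows essentially the same route as the paper: the paper also uses part \ref{part: thm: ub ASMDS 1a } of Theorem \ref{thm: ub ASMDS} to reduce to the case $t=1$ and then part \ref{part: 5 thm ub ASMDS} together with $m(k-1,q)=q+1$ to derive $s\le q+2-k$, contradicting the hypothesis. Your write-up merely makes explicit the edge cases ($d=1$, $t=0$, $k\le q+1$) and the verification of the ``in particular'' clause via Lemma \ref{lem: Bounds on MDS codes}, which the paper leaves implicit.
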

	\begin{proof}
		Let $k$ and $q$ be as required, and let $\G$ be an $[n,k,d]_q$ \AS code. If $n>s(q+1)+k-1 $, then by part \ref{part: 1 thm ub ASMDS} of Theorem \ref{thm: ub ASMDS}, $t=1$, and by part \ref{part: 5 thm ub ASMDS}, we obtain
		\[
		s\le m(k-1,q)-k+1=q+2-k.
		\]
	\end{proof}

	\begin{corollary}\label{cor: big d means short}
		If $k\ge 3$, and $d>a (q^2+q)-q$, $a\ge 0$ then an $[n,k,d]_q$ \AS code satisfies
		\[n\le (s+1-a)(q+1)+k-2+a.\]
		In particular, if $d>q^2$, then $n\le s(q+1)+k-1$.
	\end{corollary}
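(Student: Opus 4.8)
The plan is to reduce this statement to Corollary~\ref{cor: big s short code}, whose conclusion is word‑for‑word the bound we must prove. The key observation is that the target inequality $n\le (s+1-\alpha)(q+1)+k-2+\alpha$ is exactly what Corollary~\ref{cor: big s short code} delivers once $s\ge \alpha(q+1)-1$. Hence the whole task is to convert the hypothesis on the minimum distance $d$ into this lower bound on the Singleton defect $s$.

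First I would isolate a universal relation between $d$ and $s$ that holds for \emph{every} such code. Since $\G$ is an $[n,k,d]_q$ \AS code with $k\ge 3$, Corollary~\ref{cor: from Barlotti}(\ref{part 1 cor: from Barlotti}) gives $n\le (s+1)(q+1)+k-2$. Substituting $d=n-k+1-s$ (the definition of the Singleton defect) yields
\[
d=n-k+1-s\le (s+1)(q+1)-(s+1)=q(s+1),
\]
so $d\le q(s+1)$ independently of any hypothesis. Next I would feed in the assumption $d>\alpha(q^2+q)-q=\alpha q(q+1)-q$. Combining it with $d\le q(s+1)$ gives $q(s+1)>\alpha q(q+1)-q$; dividing by $q>0$ leaves $s+1>\alpha(q+1)-1$, and since $s$ is an integer this upgrades to $s\ge \alpha(q+1)-1$. (In passing, $s+1-\alpha\ge \alpha q\ge 0$, so the asserted bound is non‑vacuous.) Applying Corollary~\ref{cor: big s short code} with this defect bound and $k\ge 3$ then gives $m^s(k,q)\le (s+1-\alpha)(q+1)+k-2+\alpha$, and as $n\le m^s(k,q)$ the main inequality follows. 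The ``in particular'' clause is simply the case $\alpha=1$, where $\alpha(q^2+q)-q=q^2$ and the bound specialises to $n\le s(q+1)+k-1$.

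I do not expect a serious obstacle here: once the universal inequality $d\le q(s+1)$ is extracted from the length bound, the argument is a short bootstrap that trades a lower bound on $d$ for a lower bound on $s$. The only points that genuinely require care are the integrality step promoting the strict inequality $s>\alpha(q+1)-2$ to $s\ge \alpha(q+1)-1$, and confirming that the hypotheses of Corollary~\ref{cor: big s short code} (namely $k\ge 3$ together with the derived defect bound) are met—both of which hold directly. It is worth emphasising that no planar‑arc machinery is invoked afresh; all the geometric content has already been absorbed into Corollaries~\ref{cor: from Barlotti} and~\ref{cor: big s short code}.
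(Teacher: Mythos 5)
Your proof is correct, and it reaches the paper's bound by a genuinely different route in its first half. Both arguments funnel through Corollary~\ref{cor: big s short code}: once $s\ge \alpha(q+1)-1$ is established, that corollary gives $n\le (s+1-\alpha)(q+1)+k-2+\alpha$, and $\alpha=1$ yields the final clause. The difference is how that defect bound is obtained. The paper argues geometrically: it fixes a secant hyperplane $H$ of $\G$, chooses a $(k-3)$-flat $S\subseteq H$ with $|S\cap\G|\ge k-2$, and distributes the $d$ points of $\G$ off $H$ among the $q$ hyperplanes through $S$ other than $H$; pigeonhole produces a hyperplane $H'$ with $|H'\cap \G|\ge k-2+\alpha q+\alpha$, whence $k+s-1=n-d\ge k-2+\alpha q+\alpha$, i.e.\ $s\ge \alpha(q+1)-1$. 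You instead extract the same inequality arithmetically from the already-established length bound: $n\le (s+1)(q+1)+k-2$ (Corollary~\ref{cor: from Barlotti}, part~\ref{part 1 cor: from Barlotti}, equivalently Lemma~\ref{lem: long bound}) together with the identity $d=n-k+1-s$ gives the universal estimate $d\le q(s+1)$, which combined with $d>\alpha(q^2+q)-q$ and integrality forces $s\ge\alpha(q+1)-1$; your integrality step and the non-vacuousness check are both sound, and there is no circularity since neither Corollary~\ref{cor: from Barlotti} nor Corollary~\ref{cor: big s short code} depends on the present statement. What each approach buys: yours is shorter and reuses existing machinery rather than running a fresh pencil-of-hyperplanes count; the paper's is self-contained given only the definition of a secant, and its local counting template is what the paper then sharpens in Corollary~\ref{cor: big d means short t>1} by starting from a richer flat when $t>1$. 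Your method would adapt to that case too, by substituting the stronger length bound of part 1(a) of Theorem~\ref{thm: ub ASMDS} (giving $d\le sq$), so the two approaches appear to be of equal strength here.
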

	\begin{proof}
		For $k\ge 3$ let $\G$ be an $[n,k,d]_q$ \AS code with $d>a (q^2+q)-q$.  Let $H$ be a secant of $\G$, and let $S\subseteq H $ be a $(k-3)$-flat with $|S\cap \G|\ge k-2$.  Through $S$ there are $q$ hyperplanes other than $H$. Since $d>a (q^2+q)-q$, there exists a hyperplane $H'$ through $S$ with $|H'\cap \G|\ge k-2 + a q +a $. It follows that
		$n-d=k+s-1\ge k-2 +a q+a $, whence $s\ge a q+a-1 $, and Corollary \ref{cor: big s short code} gives the result.
	\end{proof}

	We may improve slightly if $t>1$.

	\begin{corollary}\label{cor: big d means short t>1}
		Let $\G$ be an $[n,k,d]_q$ \AS code, and $\G^\perp$ an $[n,k^\perp,d^\perp]_q$ A$^t$MDS code, $k\ge 3$. If $t>1$, and $d>a (q^2+q)-2q$, $a\ge 0$ then
		\[n\le (s+1-a)(q+1)+k-2+a.\]
	\end{corollary}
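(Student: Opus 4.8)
The plan is to follow the template of the proof of Corollary~\ref{cor: big d means short}: I will locate a hyperplane $H'$ meeting $\G$ in many points, use the secant bound $|H'\cap\G|\le n-d=k+s-1$ to force $s\ge\alpha(q+1)-1$, and then quote Corollary~\ref{cor: big s short code} to conclude that $n\le(s+1-\alpha)(q+1)+k-2+\alpha$. The only place where the hypothesis $t>1$ should enter---and the source of the improvement from $d>\alpha(q^2+q)-q$ to the weaker $d>\alpha(q^2+q)-2q$---is that $t>1$ lets me start the count from a $(k-3)$-flat carrying $k-1$ points of $\G$, rather than the $k-2$ points that an arbitrary secant supplies. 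That single extra point is exactly what shifts the required threshold on $d$ down by $q$.

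First I would produce the fat flat. Since $t>1$ we have $d^\perp=k+1-t\le k-1$, so by Equation~(\ref{eqn: Proj Sys dual distance}) there is a minimal dependent family $\mathcal{Q}\subseteq\G$ of size $d^\perp$ spanning a flat $\Omega$ of projective dimension $d^\perp-2=k-1-t$; thus $|\Omega\cap\G|\ge k+1-t$. When $t=2$ this $\Omega$ is already a $(k-3)$-flat with at least $k-1$ points. When $t>2$ I would enlarge $\Omega$ to a $(k-3)$-flat $S$ by adjoining points of $\G$ lying outside the current span one at a time; non-degeneracy guarantees such points exist at every stage (the running flat stays proper), each adjunction raises the dimension by one and the incidence count by at least one, and after $t-2$ steps $S$ has projective dimension $k-3$ with $|S\cap\G|\ge(k+1-t)+(t-2)=k-1$.

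With $S$ in hand the counting is routine. Fix any hyperplane $H_0\supseteq S$; since every hyperplane misses at least $d$ points of $\G$, at least $d$ points lie off $H_0$, and each such point $P$ determines the hyperplane $\langle S,P\rangle$, one of the $q$ hyperplanes through $S$ other than $H_0$. Pigeonholing these (at least $d$) points among $q$ hyperplanes yields some $H'\supseteq S$ with at least $\lceil d/q\rceil$ points off $S$; since $H_0\cap H'=S$, these are exactly the points of $H'\cap\G$ outside $S$, so $|H'\cap\G|\ge(k-1)+\lceil d/q\rceil$. The hypothesis $d>\alpha(q^2+q)-2q=q(\alpha q+\alpha-2)$ gives $\lceil d/q\rceil\ge\alpha(q+1)-1$, whence $|H'\cap\G|\ge k-2+\alpha(q+1)$; comparing with $|H'\cap\G|\le k+s-1$ forces $s\ge\alpha(q+1)-1$, and Corollary~\ref{cor: big s short code} finishes the argument.

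The main obstacle is the first step: confirming that $t>1$ really does buy a $(k-3)$-flat with $k-1$ incidences for every admissible $t$ (not just $t=2$), and that the dimension-raising extension can always be carried out without the flat prematurely swallowing all of $\G$. The remainder is a matter of checking that the ceiling estimate $\lceil d/q\rceil\ge\alpha(q+1)-1$ is tight enough: one extra incidence in $S$ offsets exactly one fewer unit of $\lceil d/q\rceil$, which is why the threshold on $d$ relaxes by precisely $q$ relative to Corollary~\ref{cor: big d means short}.
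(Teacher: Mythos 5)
Your proof is correct and follows essentially the same route as the paper's: use $t>1$ to obtain a $(k-3)$-flat containing at least $k-1$ points of $\G$, pigeonhole the at least $d$ points off a fixed hyperplane among the $q$ remaining hyperplanes of the pencil to find a secant-violating-rich hyperplane, deduce $s\ge\alpha(q+1)-1$, and invoke Corollary~\ref{cor: big s short code}. In fact you supply details the paper leaves implicit, namely the step-by-step extension of the $(k-1-t)$-flat spanned by a minimal dependent set to a $(k-3)$-flat when $t>2$, and the explicit ceiling estimate $\lceil d/q\rceil\ge\alpha(q+1)-1$.
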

	\begin{proof}
		Since $t>1$ there exists a $(k-3)$-flat $\Omega$ with $|\Omega\cap \G|\ge k-1$. Let $H$ be a hyperplane through $\Omega$. By definition, $|H\cap \G|\le n-d$,  so (as in the previous proof) there exists a hyperplane $H'$ through $\Omega$ with $|H'\cap \G|\ge k-1 + a q +a -1$. It follows that
		$n-d=k+s-1\ge k-1+a q+a -1$, whence $s\ge a q+a-1$, and  Corollary \ref{cor: big s short code} gives the result.
	\end{proof}

	In \cite{MR2158768}, it is shown (for $k=3$) that if $d\le q^2$, then an $[n,k,d]_q$ \AS code $\G$ meets the Griesmer bound if and only if $n> s(q+1)+k-2$. The proof holds for $k\ge 3$. Thus, with Corollary \ref{cor: big d means short}, Theorem \ref{thm-Projective}, and part \ref{part: 1 thm ub ASMDS} of Theorem \ref{thm: ub ASMDS} we obtain the following.

	\begin{corollary}\label{cor: long implies Griesmer}
		For $k\ge 3$ let $\G$ be an $[n,k,d]_q$ \AS code with $\G^\perp$ an A$^t$MDS code. If $n> s(q+1)+k-2$ then   $\G$ meets the Griesmer bound, $\G$ is projective, and $t=1$.
	\end{corollary}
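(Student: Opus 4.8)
The plan is to prove the three assertions --- that $\G$ meets the Griesmer bound, that $\G$ is projective, and that $t=1$ --- in a convenient logical order, and to reduce the projectivity statement to $t=1$ at the outset. For $k\ge 3$ one has $d^\perp=k+1-t$, so $t=1$ gives $d^\perp=k>2$; invoking the earlier observation that a code of dimension $k>2$ is projective precisely when $d^\perp\ne 2$, projectivity follows for free once $t=1$ is established. Throughout I would split on whether $d\le q^2$ or $d>q^2$, since the Griesmer input behaves differently in the two ranges.

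For the Griesmer conclusion when $d\le q^2$ I would simply quote the result of \cite{MR2158768} (which, as noted, holds for all $k\ge 3$): it asserts the equivalence ``$\G$ meets the Griesmer bound $\iff n>s(q+1)+k-2$'', so the hypothesis $n>s(q+1)+k-2$ delivers it immediately. When $d>q^2$ I would instead apply Corollary~\ref{cor: big d means short}, which gives $n\le s(q+1)+k-1$; together with $n>s(q+1)+k-2$ this pins $n=s(q+1)+k-1$, hence $d=sq$ with $s>q$. A one-line evaluation of the Griesmer function, $g_q(k,sq)=sq+s+\sum_{j=1}^{k-2}\lceil s/q^j\rceil\ge sq+s+2+(k-3)=sq+s+k-1=n$ (using $\lceil s/q\rceil\ge 2$ because $s>q$), shows $g_q(k,sq)\ge n$; since the Griesmer bound is a lower bound on $n$, equality is forced and $\G$ meets it.

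For $t=1$ the clean case is $n>s(q+1)+k-1$: the contrapositive of the first part of Theorem~\ref{thm: ub ASMDS}\,(\ref{part: 1 thm ub ASMDS}), which says $t>1\Rightarrow n\le s(q+1)+k-1$, immediately gives $t=1$, and then projectivity follows either from the reduction above or directly from part~\ref{part: 4 thm proj} of Theorem~\ref{thm-Projective}. The main obstacle is the boundary value $n=s(q+1)+k-1$: there every length estimate used above is tight, and neither Theorem~\ref{thm: ub ASMDS} nor Theorem~\ref{thm-Projective} alone rules out $t>1$ or a repeated coordinate (a doubled point already forces $d^\perp=2$, i.e. $t=k-1$). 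I would attack this borderline case through the rigidity imposed by Griesmer-optimality: $d^\perp\le k-1$ places at least $d^\perp$ points of $\G$ on a common $(k-3)$-flat $\Lambda$, and substituting $|\Lambda\cap\G|\ge d^\perp$ into the equality analysis of Lemma~\ref{lem: fat k-3 flat short code} forces the pencil of hyperplanes through $\Lambda$ to have a completely prescribed intersection pattern. The crux --- and where I expect the real difficulty to lie --- is to show that this forced pattern is incompatible with $d$ being Griesmer-optimal, which would eliminate the case $t>1$ at the boundary and thereby complete the proof of both $t=1$ and projectivity.
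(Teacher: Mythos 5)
Your decomposition is exactly the one the paper intends (its proof consists of citing the criterion from \cite{MR2158768} for $d\le q^2$, Corollary \ref{cor: big d means short}, Theorem \ref{thm-Projective}, and part \ref{part: 1 thm ub ASMDS} of Theorem \ref{thm: ub ASMDS}), and your explicit evaluation of the Griesmer function in the regime $d>q^2$ is correct and is more detail than the paper supplies. However, the boundary case $n=s(q+1)+k-1$ that you leave open is a genuine gap, and it cannot be closed, because the corollary as printed is false there. Concretely, in PG$(2,q)$ take $P=(0:1:0)$ with multiplicity $2$, together with the $q$ points $(x:x^2:1)$, $x\in\F_q$, and the point $(1:0:0)$. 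Every line through $P$ meets this multiset in exactly $3$ points (counting multiplicity) and every other line in at most $3$, so $\G$ is a $[q+3,3,q]_q$ code with $s=1$, and $n=q+3>q+2=s(q+1)+k-2$. Yet $\G$ is not projective, $d^\perp=2$ so $t=k+1-d^\perp=2\ne 1$, and $n=q+3>q+2=g_q(3,q)$, so $\G$ also fails the Griesmer conclusion: all three assertions break down. (The failure is not confined to $d\le q^2$ either: doubling every point of PG$(2,q)$ gives a $[2(q^2+q+1),3,2q^2]_q$ code with $s=2q$ and $n=s(q+1)+k-1$ which does meet the Griesmer bound but has $t=2$ and is non-projective.) So the rigidity argument you hope for in your final paragraph does not exist.

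The source of the trouble --- which infects the paper's own proof equally --- is that the quoted criterion from \cite{MR2158768} is off by one. For an A$^s$MDS code with $d\le q^2$ one has $g_q(k,d)=d+\lceil d/q\rceil+(k-2)$, so $\G$ meets the Griesmer bound iff $\lceil d/q\rceil=s+1$; since $d\le(s+1)q$ always holds (by Lemma \ref{lem: long bound}, $n\le(s+1)(q+1)+k-2$), this is equivalent to $d>sq$, i.e.\ to $n>s(q+1)+k-1$, not $n>s(q+1)+k-2$. Once the hypothesis is corrected to $n>s(q+1)+k-1$, your argument (equivalently, the paper's) is complete with no boundary case remaining: $t>1$ would contradict part \ref{part: 1 thm ub ASMDS} of Theorem \ref{thm: ub ASMDS}, projectivity then follows from $d^\perp=k\ne 2$ exactly as in your reduction (or from part \ref{part: 4 thm proj} of Theorem \ref{thm-Projective}), the case $d>q^2$ is outright impossible by Corollary \ref{cor: big d means short}, and the corrected criterion gives the Griesmer claim for $d\le q^2$. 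Your instinct that the boundary value is where the real difficulty lies was exactly right; the resolution is that the statement needs the stronger hypothesis, not a cleverer argument at the boundary.
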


	\begin{corollary}\label{cor: big k, k+s} The following hold.
		\begin{enumerate}
			\item If $s\in \{0,1\}$ and $k\ge (t+1)(q+1)-1$, then $m^s_t(k,q) = k+1$. \label{part 1: cor: big k, k+s}
			\item If $s>1$ and $k\ge t(q+1)$, then  $m^s_t(k,q) = k+s$. \label{part 2: cor: big k, k+s}
		\end{enumerate}
	\end{corollary}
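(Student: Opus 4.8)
The plan is to obtain each equality by pairing an upper bound---driven by forcing the minimum distance to collapse when $s\ge 1$---with a matching construction.

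For the upper bounds I would rule out, by contradiction, any contributing code with $d>1$. In part \ref{part 1: cor: big k, k+s}, dispatch $s=0$ first: then $\G$ is MDS, so $\G^\perp$ is MDS, $t=0$, and the hypothesis reads $k\ge (0+1)(q+1)-1=q$, whence Bush's theorem (Section \ref{subsec: MDS Codes}, \cite{Bush1952}) gives $m(k,q)=k+1$. For $s=1$ we have $t\ge 1$ (otherwise $\G^\perp$, hence $\G$, would be MDS). If some contributing code had $d>1$, then, as $s,t\ge 1$, part \ref{part: 8 thm ub ASMDS} of Theorem \ref{thm: ub ASMDS} would give $k\le (t+1)(q+1)-2$, contradicting $k\ge (t+1)(q+1)-1$; thus $d=1$, and since $s=n-k+1-d$ we get $n=k+1$. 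Part \ref{part 2: cor: big k, k+s} runs identically: $s>1$ again forces $t\ge 1$, and $d>1$ would invoke part \ref{part: thm: ub ASMDS 1b} of Theorem \ref{thm: ub ASMDS} to give $k\le t(q+1)-1$, contradicting $k\ge t(q+1)$; so $d=1$ and $n=k+s$. This yields $m^s_t(k,q)\le k+1$ and $m^s_t(k,q)\le k+s$, respectively.

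For the lower bound in part \ref{part 1: cor: big k, k+s} with $s=1$, I would work on the dual side, where $k^\perp=n-k=1$. Taking a single vector $v$ of Hamming weight $k+1-t$ and setting $\G^\perp=\langle v\rangle$ produces a $[k+1,1,k+1-t]_q$ code of defect exactly $t$; its dual $\G$ is a $[k+1,k,1]_q$ \AS code, non-degenerate precisely when $d^\perp=k+1-t\ge 2$, i.e. $t\le k-1$, which is guaranteed by $k\ge (t+1)(q+1)-1$. Hence $m^1_t(k,q)\ge k+1$, giving equality. Here the key point is that one-dimensional codes are unconstrained in length for each defect, so every admissible $t$ is realised.

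The step I expect to be the main obstacle is the lower bound for part \ref{part 2: cor: big k, k+s}, where the prescribed dual defect must be matched. The trivial construction $\{e_1,\dots,e_k\}$ with $\mu(e_1)=s+1$ already gives length $k+s$, so $m^s(k,q)\ge k+s$; but now the dual has dimension $k^\perp=s\ge 2$, and $d=1$ forces $\G^\perp$ to be degenerate. Deleting its zero coordinates (Remark \ref{rem: degenerate can be ignored}) reduces the existence question to that of an A$^{t'}$MDS code of dimension $s$ whose length is governed by the ceiling of Lemma \ref{lem: long bound}; so---unlike the one-dimensional situation of part \ref{part 1: cor: big k, k+s}---the dual defect cannot be prescribed freely and must be pinned down carefully within the admissible range of $t$ before the equality $m^s_t(k,q)=k+s$ can be asserted.
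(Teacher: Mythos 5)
Your upper-bound half is exactly the paper's proof, which in fact consists of nothing else: the paper cites Bush \cite{Bush1952} for $s=0$, uses part \ref{part: 3 thm ub ASMDS} of Theorem \ref{thm: ub ASMDS} (equivalent, when $s=1$, to the part \ref{part: 8 thm ub ASMDS} you invoke) to force $d=1$ in part \ref{part 1: cor: big k, k+s}, and uses part \ref{part: 1 thm ub ASMDS} to force $d=1$ in part \ref{part 2: cor: big k, k+s}; since the defect is fixed, $n=k-1+s+d$, so $d=1$ pins the length of \emph{every} contributing code to exactly $k+1$, resp.\ $k+s$. Your contradiction arguments are correct and coincide with this. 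Where you differ is that you also pursue the matching lower bounds, which the paper's proof omits entirely. Your construction for part \ref{part 1: cor: big k, k+s} --- taking $\G^\perp=\langle v\rangle$ with $v$ of weight $k+1-t$, so that $\G$ is a non-degenerate $[k+1,k,1]_q$ AMDS code with dual defect exactly $t$ (valid since $k\ge (t+1)(q+1)-1$ gives $t\le k-1$) --- is correct and supplies an existence statement the paper leaves implicit. The obstacle you flag for part \ref{part 2: cor: big k, k+s} is genuine, but it is a gap in the paper rather than a defect of your approach: the paper never exhibits, for $s>1$, a length-$(k+s)$ code with prescribed dual defect $t$, and indeed none may exist. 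The dual of any contributing code is a (possibly degenerate) $[k+s,\,s,\,k+1-t]_q$ code, and for $s\ge 2$ the Griesmer bound applied to it yields $\lceil (k+1-t)/q\rceil\le t+1$, i.e.\ $k\le t(q+1)+q-1$; so for $k\ge t(q+1)+q$ the family defining $m^s_t(k,q)$ is empty and the stated equality can only be read as ``any such code, if one exists, has length $k+s$.'' In short, your proof matches the paper everywhere the paper is complete, and the loose end you honestly identify points at an imprecision in the corollary's formulation (nonemptiness of the defining family) that the paper's own proof does not address.
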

	\begin{proof}
		For the case $s=0$, see Bush \cite{Bush1952}. For the case $s=1$,  part  \ref{part: 3 thm ub ASMDS} of Theorem \ref{thm: ub ASMDS} gives $d=1$, so $n=k+s$. The case $s>1$ follows similarly from part \ref{part: 1 thm ub ASMDS} of Theorem \ref{thm: ub ASMDS}.
	\end{proof}

	The largest of the upper bounds on the length of linear $[n,k,d]_q$ codes presented here applies only to cases where $t=1$:
	\begin{equation} \label{eqn: largest bound}
		n\le (s+1)(q+1)+k-2.
	\end{equation}
	We are interested in determining existence conditions for codes meeting this bound. From Corollary \ref{cor: from Barlotti}, and Corollary \ref{cor: big s short code} any code of dimension $k\ge 4$ meeting the bound (\ref{eqn: largest bound}) satisfies either \begin{enumerate}
		\item[(a)] $(s+2)|q$,  $q$ even, and $2\le s \le q-4$, or
		\item[(b)] $q-2 \le s\le q-1$.
	\end{enumerate}
	There are clearly codes of dimension $k=2$ meeting (\ref{eqn: largest bound}) for each $s\ge 0$. When $k=3$ there are optimal codes of type (b). For example, removing a single line from the plane $PG(2,q)$ yields a $[q^2,3,q^2-q]_q$ code $C$ with $S(C)=q-2$, whereas the entire plane yields an $[q^2+q+1, 3, q^2]_q$ code $C'$, with $S(C')=q-1$. For each $q>2$, there exists a $(q^2+1)$-cap in PG$(3,q)$ with maximal plane intersection $q+1$, and there exists a  cap of size $8$ in PG$(3,2)$  (see e.g. \cite{MR840877}). Thus for each $q\ge 3$ there is a $[q^2+1,4,d]_q$ code $C$ with $S(C)=q-2$, and $S(C^\perp)=1$, and in the binary case there is an $[8,4,4]_2$ NMDS code ($s=q-1$). For dimensions $k\ge 5$ we are able to show that no codes of type (b) exist for $q>3$.
	\begin{corollary} \label{cor: q-1 and q-2} \phantom{ }
		\begin{enumerate}
			\item If $q>2$, and $2\le k \le 3$, or if $q=2$ and $2\le k \le 4$ then $m^{q-1}(k,q)=(s+1)(q+1)+k-2 = q^2+q+k-2$. \label{part: 1 cor: q-1 and q-2}
			\item If $q>2$, and $k\ge 4$,  then $m^{q-1}(k,q)\le s(q+1)+k-2 = q^2+k-3$. \label{part: 2 cor: q-1 and q-2}
			\item If $q=2$, and $k\ge 5$, then $m^{q-1}(k,q) = k+2$. \label{part: 3 cor: q-1 and q-2}
			\item If $2\le k\le 4$ and $q>2$  then $m^{q-2}(k,q)=(s+1)(q+1)+k-2 = q^2+k-3$. \label{part: 4 cor: q-1 and q-2}
			\item If $k\ge 5$ and $q>3$,  then $m^{q-2}(k,q)\le s(q+1)+k-1$. \label{part: 5 cor: q-1 and q-2}
		\end{enumerate}
	\end{corollary}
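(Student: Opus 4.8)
The statement separates into the exact-value parts (\ref{part: 1 cor: q-1 and q-2}, \ref{part: 4 cor: q-1 and q-2}), which assert that the universal bound is attained in small dimension, and the strict upper bounds (\ref{part: 2 cor: q-1 and q-2}, \ref{part: 3 cor: q-1 and q-2}, \ref{part: 5 cor: q-1 and q-2}), which assert that for larger dimension the length drops to $s(q+1)+k-1$. For the exact values, the upper bounds are immediate from the universal bound $m^s(k,q)\le(s+1)(q+1)+k-2$ of Corollary \ref{cor: from Barlotti}(\ref{part 1 cor: from Barlotti}), together with Lemma \ref{lem: trivial bounds}(\ref{part 5 lem: trivial bounds}) when $k=2$. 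The matching lower bounds are exactly the constructions assembled in the paragraph preceding the statement: the full plane $PG(2,q)$ and the binary $8$-cap yield part \ref{part: 1 cor: q-1 and q-2}, while the plane with one line deleted and the $(q^2+1)$-cap of $PG(3,q)$ yield part \ref{part: 4 cor: q-1 and q-2}. Thus I would dispatch these parts by pairing each construction against the universal bound.

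For parts \ref{part: 2 cor: q-1 and q-2} and \ref{part: 5 cor: q-1 and q-2} the engine is a single dichotomy. Let $\G$ be an $[n,k,d]_q$ \AS code with $n>s(q+1)+k-1$; we may assume $d>1$ (else $n=k+s$) and $t\ge1$ (else $\G$ is MDS and $s=0$). Part \ref{part: 1 thm ub ASMDS} of Theorem \ref{thm: ub ASMDS} (the case $t>1$) then forces $t=1$, and part \ref{part: 5 thm ub ASMDS} gives $k+s-1\le m(k-1,q)$. Substituting $s=q-1$ (resp. $s=q-2$), I would invoke the MDS bounds: by Bush \cite{Bush1952}, $m(k-1,q)=k$ when $k-1\ge q$, while by Lemma \ref{lem: Bounds on MDS codes}(\ref{part 3: lem: Bounds on MDS codes}), $m(k-1,q)\le q+k-4$ when $4\le k-1<q$. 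In each regime the inequality $k+s-1\le m(k-1,q)$ fails --- for $s=q-1$ whenever $q>2$, and for $s=q-2$ whenever $q>3$ --- which is the contradiction proving $n\le s(q+1)+k-1$.

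Two refinements finish these parts. In part \ref{part: 2 cor: q-1 and q-2} the borderline case $k=4$, $s=q-1$, $q$ even slips past the MDS bound, since there $k+s-1=q+2=m(3,q)$ (a hyperoval exists). Here I would instead use that $t=1$ forces $d^\perp=k=4$, so $\G$ --- which is projective by Theorem \ref{thm-Projective}(\ref{part: 4 thm proj}) --- is a cap in $PG(3,q)$; as the maximum size of a cap in $PG(3,q)$ is $q^2+1$ for $q>2$, we obtain $n\le q^2+1<q^2+2=s(q+1)+k-1<n$, a contradiction, and this argument in fact settles all of $k=4$, $s=q-1$ uniformly. For part \ref{part: 3 cor: q-1 and q-2} ($q=2$, $s=1$) I would argue from AMDS theory alone: the lower bound $m^1(k,2)\ge k+2$ is Lemma \ref{lem: trivial bounds}(\ref{part 3 lem: trivial bounds}) (valid for $k>2q=4$), while for the upper bound the case $t=1$ gives $n\le m'(k,2)=k+1$ by Theorem \ref{thm: Dodunekov 2.7}(\ref{part: thm Dodunekov P4}) and the case $t\ge2$ gives $n\le q+k=k+2$ by Theorem \ref{thm: long AMDS is NMDS}; hence $m^1(k,2)=k+2$.

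The step I expect to be genuinely delicate is the residual case of part \ref{part: 5 cor: q-1 and q-2} with $q=3$, so that $s=q-2=1$, for $k\ge5$. There $k+s-1=k$, and since $k-1\ge q=3$, Bush's theorem also gives $m(k-1,3)=k$; the forced inequality $k+s-1\le m(k-1,q)$ then holds with equality and the secant bound does not close, while the cap bound in $PG(k-1,3)$ is far too weak for large $k$. This is precisely the long-NMDS-over-$\F_3$ regime, and I anticipate needing either the finer structure theory of such codes or a direct analysis of the resulting cap of maximal hyperplane section $k$, handled as a separate case (or by restricting that part to $q\ge4$).
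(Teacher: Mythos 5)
Your proposal tracks the paper's proof closely. Parts \ref{part: 1 cor: q-1 and q-2} and \ref{part: 4 cor: q-1 and q-2} are handled identically (the constructions of the preceding paragraph played against the universal bound); for part \ref{part: 3 cor: q-1 and q-2} your dichotomy is the paper's one --- the paper's text cites a corollary whose label does not exist in the document, but the intended fact is exactly what you use, namely $m'(k,2)=k+1$ for $k>4$ from part \ref{part: thm Dodunekov P4} of Theorem \ref{thm: Dodunekov 2.7}. For part \ref{part: 2 cor: q-1 and q-2} the paper argues only at $k=4$, via the same Bose--Qvist cap bound $q^2+1$ in PG$(3,q)$ that you invoke, and lifts to $k\ge 5$ implicitly through part \ref{part: 3 bounds on length of AsMDS} of Lemma \ref{lem: bounds on length of AsMDS}; you instead treat $k\ge 5$ directly via $k+s-1\le m(k-1,q)$, Bush's theorem, and part \ref{part 3: lem: Bounds on MDS codes} of Lemma \ref{lem: Bounds on MDS codes}. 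The two reductions are interchangeable, and your patch for the even-$q$, $k=4$ borderline is precisely the paper's argument.

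The case you flagged as delicate --- part \ref{part: 5 cor: q-1 and q-2} with $q=3$ --- is not a gap you failed to close but an error in the paper itself. The paper's proof takes $k=5$, deduces $q+2\le m(4,q)$, and declares a contradiction with part \ref{part 3: lem: Bounds on MDS codes} of Lemma \ref{lem: Bounds on MDS codes}; but that bound carries the blanket hypothesis $q>k$, so it says nothing when $q=3$ (or $q=4$, where Bush's $m(4,4)=5<q+2$ still rescues the contradiction). At $q=3$ Bush gives $m(4,3)=5=q+2$, exactly the equality you identified, and no contradiction arises. Worse, the statement itself is false at $q=3$: the dual ternary Golay code $[11,5,6]_3$ and the extended self-dual Golay code $[12,6,6]_3$ are NMDS, hence A$^{q-2}$MDS codes with $k\ge 5$, of lengths $11$ and $12$, well above the claimed bound $s(q+1)+k-1=k+3$. (Both are in fact length-maximal, so they also overturn the table entry $\kappa(q-2,q)=4$ and the weak $\kappa$-conjecture at $q=3$.) Your proposed repair --- restricting part \ref{part: 5 cor: q-1 and q-2} to $q\ge 4$ --- is the correct one, and with it your case analysis (Bush for $k-1\ge q$, part \ref{part 3: lem: Bounds on MDS codes} of Lemma \ref{lem: Bounds on MDS codes} for $4\le k-1<q$) closes the proof for every $k\ge 5$.
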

	\begin{proof}
		Parts \ref{part: 1 cor: q-1 and q-2}, and \ref{part: 4 cor: q-1 and q-2} follow from the preceding discussion.
		For the remaining parts, let $\G$ be an $[n,k,d]_q$ \AS	code with $ \G^\perp $ an A$^t$MDS code, and $n\ge s(q+1)+k$. Note that if $d=1$ then $n=k+s$, so $d>1$. By part \ref{part: 1 thm ub ASMDS} of Theorem  \ref{thm: ub ASMDS}, $t=1$, so if $k\ge 4$ then $\G$ is an $n$-cap in PG$(k-1,q)$.
		For part \ref{part: 2 cor: q-1 and q-2},  $s=q-1$, and $q>2$. Taking $k=4$ and $n=q^2+k-2$ yields a $(q^2+2)$-cap in PG$(3,q)$, contrary to the result of Bose \cite{Bose1947} for $q$ odd, and that of Qvist \cite{Qvist1952} for $q$ even.\\
		Part \ref{part: 3 cor: q-1 and q-2}: If $k\ge 5$ then (part \ref{part 3 lem: trivial bounds} of Lemma \ref{lem: trivial bounds}), $m^1(k,q)\ge k+2$. The result then follows from part \ref{part: thm: ub ASMDS 1a } of Theorem \ref{thm: ub ASMDS} (which gives $n\le s(q+1)+k-1$ when $t>1$), and part \ref{part: thm Dodunekov P4} of Theorem \ref{thm: Dodunekov 2.7} (which gives $m'(k,q)=k+1$ for $k>2q$).\\
		For part \ref{part: 5 cor: q-1 and q-2},  $s=q-2$, $q>2$. Consider $k=5$.  By part \ref{part: 5 thm ub ASMDS} of Theorem  \ref{thm: ub ASMDS} (which gives $k+s-1\le m(k-1,q)$), $m(4,q)\ge q+2$. The result of Bush \cite{Bush1952} (see introduction to Section \ref{subsec: MDS Codes}) gives $m(4,4)=5$, so consider $q>4$. Part \ref{part 3: lem: Bounds on MDS codes} of Lemma \ref{lem: Bounds on MDS codes} gives the contradiction $ m(4,q)\le q+1$  The bound for $k=5$ being established, the cases $k>5$ follow inductively from part \ref{part: 2 bounds on length of AsMDS} of Lemma \ref{lem: bounds on length of AsMDS}.
	\end{proof}

	\begin{lemma}\label{lem: full length comb}
		Let $k\ge 3$. If $\G$ is an $[n,k,d]_q$ \AS code with $n=(s+1)(q+1)+k-2$, then for each $0\le j\le k-2$,    $\gamma_j = \frac{{n-j\choose k-1-j}}{{n-d-j \choose k-1-j}}$ is an integer.
	\end{lemma}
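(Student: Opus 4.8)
The plan is to realize each $\gamma_j$ as the number of secant hyperplanes through a fixed flat spanned by $j$ points of $\G$; being a cardinality, such a number is automatically a non‑negative integer. Everything reduces to one structural fact about length‑maximal codes: \emph{every hyperplane spanned by $k-1$ points of $\G$ is a secant}. First I would dispose of the trivial case $s=0$, where $n-d=k-1$, so $\binom{n-d-j}{k-1-j}=\binom{k-1-j}{k-1-j}=1$ and $\gamma_j=\binom{n-j}{k-1-j}$ is patently an integer. So assume $s\ge 1$. Since $n=(s+1)(q+1)+k-2>s(q+1)+k-2$, Corollary~\ref{cor: long implies Griesmer} (equivalently part~\ref{part: 1 thm ub ASMDS} of Theorem~\ref{thm: ub ASMDS}) forces $t=1$; hence $d^\perp=k+1-t=k$, and the dual‑distance description~(\ref{eqn: Proj Sys dual distance}) shows that any $k-1$ points of $\G$ are affine independent. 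Note also $d>1$, since $d=1$ would give $n=k+s<(s+1)(q+1)+k-2$.

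The heart of the argument is the structural claim, and this is where I expect the main obstacle to lie, since it is exactly the point at which length‑maximality and the affine independence coming from $t=1$ are both consumed. Let $H$ be a hyperplane with $|H\cap\G|\ge k-1$, and choose $k-2$ of its points. By affine independence they span a $(k-3)$-flat $\Lambda\subseteq H$ meeting $\G$ in \emph{exactly} $k-2$ points (a further point of $\G$ on $\Lambda$ would put $k-1$ points in a $(k-3)$-flat, violating affine independence). The $q+1$ hyperplanes through the codimension‑$2$ flat $\Lambda$ partition $\G\setminus\Lambda$, which has $n-(k-2)=(s+1)(q+1)$ points, and each such hyperplane meets $\G$ in at most $n-d=k+s-1$ points, i.e. in at most $s+1$ points off $\Lambda$. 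Thus $(s+1)(q+1)$ is a sum of $q+1$ terms each $\le s+1$, forcing every term to equal $s+1$: each hyperplane through $\Lambda$, and in particular $H$, is a secant. Getting the count to close requires precisely the control $|\Lambda\cap\G|=k-2$ together with the exact value of $n$.

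Finally I would perform the double count. Fix $0\le j\le k-2$ and $j$ points of $\G$ spanning a $(j-1)$-flat $\pi$; again by affine independence $\pi\cap\G$ consists of just those $j$ points. On one hand there are $\binom{n-j}{k-1-j}$ subsets of $\G$ of size $k-1$ that contain the chosen $j$ points, and by the structural claim each spans a unique secant, which necessarily contains $\pi$. On the other hand, a fixed secant $H\supseteq\pi$ is spanned by exactly those subsets obtained by adjoining to the $j$ points any $k-1-j$ of the remaining $(k+s-1)-j=n-d-j$ points of $\G\cap H$, that is, by $\binom{n-d-j}{k-1-j}$ of them. Hence the number of secants through $\pi$ equals $\binom{n-j}{k-1-j}/\binom{n-d-j}{k-1-j}=\gamma_j$, so $\gamma_j$ is an integer; for $j=0$ this is simply the total number of secants of $\G$. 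This completes the plan.
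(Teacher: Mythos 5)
Your proof is correct and follows essentially the same route as the paper's: force $t=1$ (so $d^\perp=k$ and any $k-1$ points of $\G$ are affine independent), use the pencil of $q+1$ hyperplanes through the $(k-3)$-flat spanned by $k-2$ code points to show every hyperplane spanned by code points is a secant, and then double count incidences between $(k-1)$-subsets and secants. The only real difference is in the last step — the paper proves the $j=0$ case and handles $j\ge 1$ by induction through quotient geometries, while you count secants through the $(j-1)$-flat spanned by $j$ code points directly for every $j$ — but these are the same count, since secants of the quotient code correspond exactly to secants of $\G$ through that flat (your explicit treatment of $s=0$, which the paper's appeal to Theorem~\ref{thm: ub ASMDS} silently skips, is a small point in your favor).
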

	\begin{proof}
		Suppose $\G$ is an $[n,k,d]_q$ \AS code with $n= (s+1)(q+1)+k-2$. From Theorem \ref{thm: ub ASMDS} $S(\G^\perp)=1$, so $d^\perp=k$. Consequently, any subset of $k-1$ or fewer points of $\G$ are independent. In particular, if $S \subseteq \G$ with $|S|=k-2$,  then $\langle S \rangle$ is a $(k-3)$-flat with $|\langle S \rangle \cap \G| = k-2$.  Simple counting shows that each member of the pencil of $q+1$ hyperplanes through $\langle S \rangle$  meets $\G$ in exactly $s+k-1$ points (i.e. each is a secant of $\G$). Denote by $\gamma_0$ the number of secants. Counting  set, hyperplane pairs $(A,\mathcal{S})$ where $A\subseteq \G$, $|A|=k-1$, $A \subseteq \mathcal{S}$, and $\mathcal{S}$ is a secant of $\G$ we obtain
		\begin{equation}
			{n\choose k-1} = \gamma_0 \cdot {k+s-1 \choose k-1},
		\end{equation}
		establishing the result for $j=0$.  Let $H$ be a secant of $\G$, and let  $B\subseteq \G\cap H$ with $|B|=j$, $1\le j\le k-2$. Let $\G^*$ be the points in the  quotient by $\langle B \rangle$ corresponding to $\G\setminus B$. Since $\G$ is necessarily projective, and $S(\G^\perp) = 1$, $\G^*$ is a non-degenerate, projective $[n-j, k-j,d]_q$-code. Thus the result follows inductively.
	\end{proof}

	\begin{lemma}\label{lem: alpha beta}
		Let $k\ge 3$. If $\G$ is an $[n,k,d]_q$ \AS code with $n=(s+1)(q+1)+k-3$, then for each $0\le j\le k-3$,  $\alpha_j$, and $\beta_j$ are integers, where $\alpha_j = \frac{{n-j\choose k-2-j}}{{k+s-2-j \choose k-2-j}}$ and  $\beta_j = \alpha_j \cdot  \frac{q(s+1)}{k+s-1-j}$.
	\end{lemma}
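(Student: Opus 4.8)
The plan is to follow the template of Lemma~\ref{lem: full length comb}: interpret $\alpha_j$ and $\beta_j$ as counts of certain hyperplanes in a shortened code, and deduce integrality from the fact that counts are integers. The two relevant families of hyperplanes are the \emph{secants} (meeting $\G$ in $k+s-1$ points) and the ``near-secants'' (meeting $\G$ in exactly $k+s-2$ points).

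First I would pin down the dual parameters. Since $n=(s+1)(q+1)+k-3>s(q+1)+k-2$, Corollary~\ref{cor: long implies Griesmer} forces $S(\G^\perp)=1$, hence $d^\perp=k$ and $\G$ is projective. By Eqn.~(\ref{eqn: Proj Sys dual distance}) this means every $k-1$ points of $\G$ are affine independent; in particular each $(k-2)$-subset $A\subseteq\G$ spans a $(k-3)$-flat $\langle A\rangle$ with $\langle A\rangle\cap\G=A$. Next comes the base case $j=0$. Fix such an $A$ and consider the pencil of $q+1$ hyperplanes through $\langle A\rangle$. Each contains the $k-2$ points of $A$ and at most $s+1$ further points of $\G$ (no hyperplane meets $\G$ in more than $k+s-1$ points), while the number of points of $\G$ outside $\langle A\rangle$ is $n-(k-2)=(s+1)(q+1)-1$. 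The key observation is a box-counting one: placing $(s+1)(q+1)-1$ objects into $q+1$ boxes of capacity $s+1$ leaves a total deficiency of exactly $1$, so precisely one hyperplane of the pencil is a near-secant and the other $q$ are secants. Consequently each $(k-2)$-subset $A$ lies in a unique near-secant.

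Double counting the flags $(A,H)$ with $|A|=k-2$, $A\subseteq H$, and $H$ a near-secant then gives $\binom{n}{k-2}=(\text{number of near-secants})\cdot\binom{k+s-2}{k-2}$, so the number of near-secants equals $\alpha_0$ and is an integer. Double counting the same flags but with $H$ a secant (each $A$ now lies in exactly $q$ secants) gives $\binom{n}{k-2}\,q=(\text{number of secants})\cdot\binom{k+s-1}{k-2}$, and the identity $\binom{k+s-1}{k-2}=\binom{k+s-2}{k-2}\cdot\frac{k+s-1}{s+1}$ rewrites the right-hand side so that the number of secants equals $\alpha_0\cdot\frac{q(s+1)}{k+s-1}=\beta_0$, again an integer.

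For the inductive step I would choose a secant $H$ and a $j$-subset $B\subseteq H\cap\G$; then $\langle B\rangle$ is contained in a secant, so by Proposition~\ref{prop: quotient properties} the quotient $\G^*$ of $\G$ by $\langle B\rangle$ is a projective $[n-j,\,k-j,\,d]_q$ \AS code of the same defect $s$, with length $n-j=(s+1)(q+1)+(k-j)-3$ of the required form; moreover any $(k-j)-1$ points of $\G^*$ remain affine independent, so $\G^*$ inherits the general-position hypothesis. Applying the base case to $\G^*$ and substituting $n\mapsto n-j$, $k\mapsto k-j$ identifies $\alpha_j$ with the number of near-secants of $\G^*$ and $\beta_j$ with the number of secants of $\G^*$, both integers. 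The crux of the whole argument is the box-counting step isolating a unique near-secant in each pencil together with the binomial manipulation revealing $\beta_j$ as a genuine secant count; by contrast, checking that shortening by $B$ preserves the defect and the general-position property is routine given Proposition~\ref{prop: quotient properties}.
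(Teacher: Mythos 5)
Your proof is correct and takes essentially the same approach as the paper's: establish $S(\G^\perp)=1$ so every $(k-2)$-subset spans a $(k-3)$-flat meeting $\G$ in exactly those points, use the pencil count to show each such flat lies on exactly one tangent (your ``near-secant'') and $q$ secants, double count flags to get $\alpha_0$ and $\beta_0$, and induct by taking the quotient at $\langle B\rangle$ for $B$ a $j$-subset of a secant. The only cosmetic difference is that you invoke Corollary~\ref{cor: long implies Griesmer} rather than Theorem~\ref{thm: ub ASMDS} to force $t=1$, which amounts to the same thing.
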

	\begin{proof}
		Suppose $\G$ is an $[n,k,d]_q$ \AS code with $n= (s+1)(q+1)+k-3$. As in the previous proof, $S(\G^\perp)=1$, and if  $S\subseteq \G$ with $|S|=k-2$, then  $\langle S \rangle$ is a $(k-3)$-flat with $\langle S \rangle \cap \G = k-2$.  Simple counting shows that the pencil of $q+1$ hyperplanes through $\langle S \rangle$ holds exactly one member meeting $\G$ in $s+k-2$ points (let us call such a hyperplane a tangent of $\G$), the remaining $q$ members are secants of $\G$. Denote by $\alpha_0$ the number of tangents of $\G$, and by $\beta_0$ the number of secants. Counting  $(k-3)$-flat, hyperplane incident pairs $(\mathcal{A},\mathcal{T})$ where $\mathcal{A}\cap \G = k-2$, and $\mathcal{T}$ is a tangent of $\G$ we obtain
		\begin{equation}
			{n\choose k-2}\cdot 1 = \alpha_0 \cdot {k+s-2 \choose k-2}.
		\end{equation}
		Similarly, counting  $(k-3)$-flat, hyperplane incident pairs $(\mathcal{B},\mathcal{S})$ where $\mathcal{B}\cap \G = k-2$, and $\mathcal{S}$ is a secant of $\G$ we obtain
		\begin{equation}
			\beta_0 = \frac{{n\choose k-2}\cdot q}{{k+s-1 \choose k-2}} \left(= \alpha_0 \cdot \frac{q(s+1)}{k+s-1}\right).
		\end{equation}
		As in the previous result, an inductive argument completes the proof.
	\end{proof}

	\begin{corollary}\label{cor: prime divisors}
		Let $\G$ be an $[n,k,d]_q$ \AS code.
		\begin{enumerate}
			\item If there exists a prime $p$ with $\prod_{i=1}^{s} (q(s+1)+i) \equiv 0 \mod p$ where $s<p<k+s$, then $n\le (s+1)(q+1)+k-3$. \label{part: 1 cor: prime divisors}
			\item If there exists a prime $p$ with $\prod_{i=1}^{s} (q(s+1)+i) \equiv 0 \mod p$ where $s<p<k+s-1$, then $n\le (s+1)(q+1)+k-4$. \label{part: 2 cor: prime divisors}
			\item If there exists a prime $p$ with $\prod_{i=0}^{s} (q(s+1)+i) \equiv 0 \mod p$, and gcd$(p,q)=1$ where $s+1<p<k+s-1$, then $n\le (s+1)(q+1)+k-4$. \label{part: 3 cor: prime divisors}
		\end{enumerate}
	\end{corollary}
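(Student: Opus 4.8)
The plan is to use Lemmas~\ref{lem: full length comb} and~\ref{lem: alpha beta} in contrapositive form. Lemma~\ref{lem: full length comb} says that a length-maximal code (one with $n=(s+1)(q+1)+k-2$) forces every $\gamma_j$ to be an integer, while Lemma~\ref{lem: alpha beta} says that a code with $n=(s+1)(q+1)+k-3$ forces every $\alpha_j$ and $\beta_j$ to be an integer. Since Corollary~\ref{cor: from Barlotti}(\ref{part 1 cor: from Barlotti}) already caps $n$ at $(s+1)(q+1)+k-2$, it suffices to exhibit, under each hypothesis, a single admissible index $j$ at which the relevant ratio fails to be an integer; ruling out one length then lowers the ceiling by one (part~1), and ruling out the next length as well lowers it by one more (parts~2--3).

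Write $N=(s+1)(q+1)$ and recall that $n-d=k+s-1$ throughout. Expanding the binomial coefficients, both $\gamma_j$ (for $n=N+k-2$) and $\alpha_j$ (for $n=N+k-3$) take the common shape
\[
\frac{\prod_{t=0}^{L-1}(N+t)}{\prod_{u=s+1}^{s+L}u},
\]
a quotient of two runs of $L$ consecutive integers, the numerator starting at $N$ and the denominator at $s+1$, with $L=k-1-j$ for $\gamma_j$ and $L=k-2-j$ for $\alpha_j$. The divisibility hypothesis $p\mid\prod_{i=1}^{s}(q(s+1)+i)$ is exactly the statement that $N\equiv r\pmod p$ for some $1\le r\le s$, since $q(s+1)+i=N-(s+1-i)$ and $1\le s+1-i\le s$.

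The heart of the argument is then a $p$-adic valuation count. Choose $j$ so that $L=p-s$, namely $j=k+s-1-p$ for $\gamma_j$ and $j=k+s-2-p$ for $\alpha_j$; the inequalities $s<p<k+s$ (resp. $s<p<k+s-1$) are precisely what make $0\le j\le k-2$ (resp. $0\le j\le k-3$), so the index is legal for the corresponding lemma. With $L=p-s$ the denominator becomes $\prod_{u=s+1}^{p}u$, whose unique multiple of $p$ is $p$ itself, giving $p$-adic valuation exactly $1$; meanwhile the numerator $\prod_{t=0}^{p-s-1}(N+t)$ has residues $r,r+1,\dots,r+(p-s-1)$ modulo $p$, all lying in $[1,p-1]$ because $1\le r\le s$, so $p$ divides none of its factors. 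Hence the ratio has negative $p$-adic valuation and cannot be an integer. For part~1 this contradicts Lemma~\ref{lem: full length comb} and excludes $n=N+k-2$, yielding $n\le N+k-3$. For part~2 the same count on $\alpha_j$ contradicts Lemma~\ref{lem: alpha beta} and excludes $n=N+k-3$; combined with part~1 (which applies since $p<k+s-1<k+s$) we get $n\le N+k-4$.

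Part~3 reduces to part~2: here the product runs from $i=0$, but the extra factor $q(s+1)=N-(s+1)$ cannot be the source of the divisibility, because $\gcd(p,q)=1$ and $0<s+1<p$ together force $p\nmid q(s+1)$ (otherwise $p\mid s+1$, impossible). Thus $p\mid\prod_{i=1}^{s}(q(s+1)+i)$ with $s<s+1<p<k+s-1$, which is the hypothesis of part~2, and $n\le N+k-4$ follows. Equivalently, one may run the valuation count directly on $\beta_j$ with $j=k+s-1-p$, whose numerator carries the factor $q(s+1)$; the same residue inequalities show $p\nmid q(s+1)$ and $\beta_j\notin\mathbb{Z}$. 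I expect the only delicate point to be this $p$-adic bookkeeping---confirming that the chosen $j$ stays in range across the three slightly different windows, and that the run of $p-s$ consecutive integers beginning at $N\equiv r$ avoids every multiple of $p$ while the denominator run meets exactly one. Everything else is contrapositive plumbing against the two counting lemmas and Corollary~\ref{cor: from Barlotti}.
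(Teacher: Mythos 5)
Your proof is correct and takes essentially the same route as the paper's: both argue by contrapositive from Lemmas~\ref{lem: full length comb} and~\ref{lem: alpha beta}, choose the same index (your $j=k+s-1-p$ for $\gamma_j$ and $j=k+s-2-p$ for $\alpha_j$ coincide with the paper's $j=n-d-p$ and $j=n-d-1-p$), and conclude by the same $p$-adic count: the denominator run $(s+1)(s+2)\cdots p$ contains exactly one multiple of $p$, while the numerator run of $p-s$ consecutive integers starting at $N=(s+1)(q+1)\equiv r \pmod p$, $1\le r\le s$, contains none. The only divergence is part~3: the paper runs the analogous valuation count directly on $\beta_j$, whereas you note that $\gcd(p,q)=1$ together with $s+1<p$ forces $p\nmid q(s+1)$, so the hypothesis of part~3 already implies that of part~2. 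Your reduction is cleaner and makes explicit something the paper's direct computation obscures, namely that part~3 is logically subsumed by part~2.
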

	\begin{proof}
		For part 1, suppose $\G$ is an $[n,k,d]_q$ \AS code with $n=(s+1)(q+1)+k-2$, and let $p$ be a prime with $\prod_{i=1}^{s} (d+i) \equiv 0 \mod p$, $s<p<k+s$. By Lemma \ref{lem: full length comb}, for any $0\le j \le k-1$
		\begin{equation}
			\gamma_j = \frac{{n-j\choose k-1-j}}{{n-d-j \choose k-1-j}} = \frac{(n-j)!\cdot s!}{(d+s)!\cdot (n-j-d)!} =\frac{s!{n-j \choose d}}{(d+1)(d+2)\cdots (d+s)}
		\end{equation}
		is an integer.  Taking $j=n-d-p$, we obtain
		\begin{equation}
			\gamma_j  =\frac{s!{d+p \choose d}}{(d+1)(d+2)\cdots (d+s)} =\frac{s!(d+s+1)(d+s+2)\cdots (d+p)}{p!}.
		\end{equation}
		However, since $p\mid d+a$ for some $1\le a\le s$, $p$ does not divide $d+x$ for $s+1\le x <a+p$, whence $\gamma_j$ is not an integer.
		With reference to Lemma \ref{lem: alpha beta}, the proofs of parts 2 and 3 are entirely similar after observing that
		\[
		\alpha_j = \frac{s!{n-j \choose d+1}}{(d+2)(d+3)\cdots (d+s+1)}, \text{ and } \beta_j = \frac{q\cdot (s+1)!{n-j \choose d}}{(d+1)(d+2)\cdots (d+s+1)}.
		\]
	\end{proof}

	With reference to Lemma \ref{lem: full length comb}, part  \ref{part: 1 cor: prime divisors} of Corollary \ref{cor: prime divisors} gives the following.

	\begin{corollary}\label{cor: cor prime divisors}
		If $0<s< q-2$,  $(s+2)|q$, and there exists a prime $p$ with $\prod_{i=1}^{s} (q(s+1)+i) \equiv 0 \mod p$ where $s<p<k+s$, then
		\[
		m^s(k,q) \le (s+1)(q+1)+k-4.
		\]
	\end{corollary}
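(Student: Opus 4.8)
The plan is to push the argument behind part \ref{part: 1 cor: prime divisors} of Corollary \ref{cor: prime divisors} one step further. That part, via Lemma \ref{lem: full length comb}, already excludes the length-maximal value and yields $m^s(k,q)\le (s+1)(q+1)+k-3$. To sharpen this to $k-4$ I would rule out the next length down, $n=(s+1)(q+1)+k-3$, using the companion counting Lemma \ref{lem: alpha beta} in exactly the way part \ref{part: 1 cor: prime divisors} used Lemma \ref{lem: full length comb}.

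So suppose $\G$ is an $[n,k,d]_q$ \AS code with $n=(s+1)(q+1)+k-3$. A direct computation gives $d=q(s+1)-1$, so the prime hypothesis reads $p\mid (d+2)(d+3)\cdots(d+s+1)$, i.e.\ $p\mid d+a$ for some $2\le a\le s+1$. Since $n>s(q+1)+k-1$, part \ref{part: 1 thm ub ASMDS} of Theorem \ref{thm: ub ASMDS} forces $t=1$, so Lemma \ref{lem: alpha beta} applies and every $\beta_j$ with $0\le j\le k-3$ is an integer. I would first observe that the witnessing prime cannot be $p=s+1$: modulo $s+1$ one has $q(s+1)+i\equiv i$, so the product is $\equiv s!\equiv -1$ by Wilson's theorem, contradicting $p\mid \prod_i(q(s+1)+i)$. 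Hence $p\ge s+2$.

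Now take $j=(n-d)-p=k+s-1-p$, which lies in $[0,k-3]$ precisely because $s+2\le p\le k+s-1$; the endpoint $p=k+s-1$ (giving $j=0$) is the new case not covered by part \ref{part: 2 cor: prime divisors} of Corollary \ref{cor: prime divisors}. With this choice $n-j=d+p$, and the formula $\beta_j=\dfrac{q(s+1)!\binom{n-j}{d}}{(d+1)\cdots(d+s+1)}$ collapses to $\beta_j=\dfrac{q(s+1)!\,(d+s+2)(d+s+3)\cdots(d+p)}{p!}$. The crux is then a $p$-adic valuation check: $p\nmid q$ (otherwise the product would be $\equiv s!\not\equiv0$), $p\nmid (s+1)!$ (as $p\ge s+2$), and $p$ divides none of $d+s+2,\ldots,d+p$ (writing $d\equiv -a\ (\mathrm{mod}\ p)$ with $2\le a\le s+1$, these terms reduce to residues lying in $[1,p-2]$). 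Since $v_p(p!)=1$, this forces $v_p(\beta_j)=-1$, contradicting integrality. Thus no code of length $(s+1)(q+1)+k-3$ exists, and together with part \ref{part: 1 cor: prime divisors} we conclude $m^s(k,q)\le (s+1)(q+1)+k-4$.

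The main obstacle is the valuation bookkeeping of the previous paragraph, together with confirming that the chosen $j$ remains in $[0,k-3]$ across the whole admissible interval for $p$ — in particular at the endpoint $p=k+s-1$, which is exactly what lifts the range of part \ref{part: 2 cor: prime divisors} from $s<p<k+s-1$ up to $s<p<k+s$; the Wilson step is what disposes of the other end, $p=s+1$. The hypotheses $(s+2)\mid q$ and $0<s<q-2$ serve to place us in the regime of part 3 of Lemma \ref{lem: Barlotti 3d} (the maximal/Denniston-arc case): when instead $(s+2,q)\ne(2^e,2^h)$ the sharper bound already follows from part \ref{part 2 cor: from Barlotti} of Corollary \ref{cor: from Barlotti} by planar shortening, so it is precisely in the maximal-arc regime that the arithmetic obstruction above is the operative one.
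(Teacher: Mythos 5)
Your proof is correct, and it takes a genuinely different route from the one the paper intends. The paper treats this as an immediate consequence of machinery already in place: part \ref{part: 1 cor: prime divisors} of Corollary \ref{cor: prime divisors} eliminates the length-maximal value $(s+1)(q+1)+k-2$, and the hypotheses $0<s<q-2$ and $(s+2)\mid q$ are imported precisely so that part 3 of Lemma \ref{lem: Barlotti 3d} applies: in the plane, $m^s(3,q)\ge (s+1)(q+1)$ would force $m^s(3,q)=(s+1)(q+1)+1$, so once that maximal value is excluded one gets $m^s(3,q)\le (s+1)(q+1)-1$, and part \ref{part: 3 bounds on length of AsMDS} of Lemma \ref{lem: bounds on length of AsMDS} lifts this to $m^s(k,q)\le (s+1)(q+1)+k-4$. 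You instead kill the length $n=(s+1)(q+1)+k-3$ head-on, by running the paper's own technique for parts \ref{part: 2 cor: prime divisors} and \ref{part: 3 cor: prime divisors} --- integrality of the secant counts $\beta_j$ of Lemma \ref{lem: alpha beta} --- at the choice $j=k+s-1-p$, which captures exactly the one prime value $p=k+s-1$ that part \ref{part: 2 cor: prime divisors} misses; your exclusion of $p=s+1$ via Wilson's theorem is sound (though overkill: $p=s+1$ together with $p\mid q(s+1)+a$ would give $p\mid a$ with $1\le a\le s<p$), and the valuation count $v_p(q)=v_p((s+1)!)=v_p(d+x)=0$ for $s+2\le x\le p$ against $v_p(p!)=1$ correctly yields $v_p(\beta_j)=-1$, contradicting integrality. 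The trade-off is instructive. Your route proves a strictly stronger statement, since the hypotheses $0<s<q-2$ and $(s+2)\mid q$ are never used: in effect you extend part \ref{part: 2 cor: prime divisors} of Corollary \ref{cor: prime divisors} from the window $s<p<k+s-1$ to the full window $s<p<k+s$ unconditionally. It also avoids a dimensional delicacy in the geometric route, namely that after shortening to the plane the prime hypothesis would need to hold in the much narrower window $s<p<s+3$, which the stated hypothesis $s<p<k+s$ does not guarantee. What the paper's route buys is economy and context: given Lemma \ref{lem: Barlotti 3d} it is a one-line deduction, and its hypotheses isolate exactly the Denniston regime $(s+2,q)=(2^e,2^h)$, the only regime in which the conclusion is not already supplied by part \ref{part 2 cor: from Barlotti} of Corollary \ref{cor: from Barlotti}.
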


	\subsection*{Bounds on $\kappa (s,q)$ }
	For fixed $s,q$, let us denote by $\kappa (s,q)$ the maximum $k$ such that $m^s(k,q)=(s+1)(q+1)+k-2$. Table \ref{table: summary of kappa values} serves to set some of our results in context.

	\begin{table}[h]
		\centering
		\caption{Bounds on  $\kappa (s,q)$}\label{table: summary of kappa values}%
		\begin{tabular}{|c|c|c|}
			\hline
			{ Conditions }	& $\kappa(s,q)$  &  { Ref.}  \\ \hline
			$s\ge 0$	& $\ge 2$ & { Lem. \ref{lem: trivial bounds} } \\  \hline
			$s= 0,\, q$  { odd }	& $= 2$ &\ {   \cite{Bose1947} }  \\  \hline
			$s= 0,\, q>2$ { even }	& $= 3$ & \ {  \cite{MR704102} } \\  \hline
			$s = 1, q>3$	& $= 2$ & {\cite{MR0083141},\cite{MR1466573}, \cite{MR0377682}} \\  \hline
			$q$  { odd }, $1\le s < q-2$	& $=2$ & { \cite{MR0083141} }\\  \hline
			$q$  { even }, $(s+2)\nmid q, 1\le s < q-2$	& $=2$ & { \cite{MR0083141} }\\  \hline
			$q$  { even }, $(s+2)\mid q, 1\le s \le q-2$	& $\ge 3$ &\ {  \cite{MR0239991} }\\  \hline
			$s \ge q$	& $=2$ &  {Cor. \ref{cor: big s short code}} \\  \hline
			$s=q-1, q\ne 2$	& $=3$ &   {Cor. \ref{cor: q-1 and q-2}} \\  \hline
			$s= q-1, q=2$	& $= 4$ &  {Cor. \ref{cor: q-1 and q-2}} \\ \hline
			$s=q-2, q>3$	& $=4$ &  {Cor. \ref{cor: q-1 and q-2}} \\ \hline
		\end{tabular}
	\end{table}

	From the bounds in Corollary \ref{cor: q-1 and q-2}, and by  computing the parameters in Lemmas \ref{lem: full length comb} and \ref{lem: alpha beta} for some small values of $s$ and $q$, we may leverage the bounds in Table \ref{table: summary of kappa values} to obtain the values of $\kappa(s,q)$ in Table \ref{table: small kappa values}. Entries on the diagonal follow from Corollary \ref{cor: q-1 and q-2}, entries with an asterisk follow from Corollary \ref{cor: AMDS k bound}, and remaining entries were verified numerically using Sage \cite{sage}.

	\begin{table}[ht]
		\centering
		\caption{Bounds on  $\kappa = \kappa (s,q)$, $(s+2,q)=(2^a,2^b)$} \label{table: small kappa values}
		\begin{tabular}{|c||*{12}{c|}}
			\hline
			$q \backslash (s+2)$  & $2^{2}$ & $2^{3}$ & $2^{4}$ & $2^{5}$ & $2^{6}$ & $2^{7}$ & $2^{8}$ & $2^{9}$ & $2^{10}$ & $2^{11}$  & $2^{12}$ &$2^{13}$ \\
			\hline\hline
			$2$  &2 &  & & & & &  & & & && \\ \hline
			$2^2$  &4 &  & & & & &  & & & &&  \\
			\hline
			$2^3$  &3 & 4 & & & & &  & & & && \\
			\hline
			$2^4$ & 3& 3 &  4 & & & &  & && & & \\
			\hline
			$2^5$ & $\le 5$& 3 & 3 &  4 & & & & & & &&  \\
			\hline
			$2^6$ &$\le 64^*$ &3 &3 &3 &  4 & &  && & &&  \\
			\hline
			$2^7$ & 3& 3& 3& 3&3 &  4 &  & & & && \\
			\hline
			$2^8$ & 3& $\le 5$& 3& 3& 3&3 &  4 & &  & & &\\
			\hline
			$2^9$ & $\le 27$  & $\le 4$ & 3& 3& 3& 3& 3 &  4 && &  &\\
			\hline
			$2^{10}$ & $\le 5$ & 3&3 &3 & 3& 3& 3 &3 &  4 & & &\\
			\hline
			$2^{11}$ & 3 & 3&$\le 5$ &3 & 3& 3& 3 &3 &  3 & 4 & &\\
			\hline
			$2^{12}$ & 3 & 3& $\le 4$ & 3 & 3& 3& 3 &3 &  3 & 3 & 4&\\ 
			\hline
			$2^{13}$ & $\le 5$ & 3& 3 & 3 & 3& 3& 3 &3 &  3 & 3 & 3& 4\\ 
			\hline
		\end{tabular}
	\end{table}

	If $\G$ is an $[n,k,d]_q$ AMDS code with $n>q+k$, then by Theorem \ref{thm: long AMDS is NMDS}, $\G$ is NMDS. In the next section, we address a gap in the literature by providing sufficient conditions on $n$ and $k$ under which an A$^s$MDS code must be N$^s$MDS, $s\ge 1$.

	\subsection{More on N$^s$MDS codes}

	For $q>3$, if an $[2q+3,2q,3]_q$ NMDS code were to exist, then the dual code would be an $[2q+3,3,2q]_q$ code, and thus, by part \ref{part 2 cor: from Barlotti} of Corollary \ref{cor: from Barlotti}, we arrive at the contradiction $2q+3\le 2q+1$.  By the discussion in Section \ref{sec: trivial codes}, an $[2q+2,2,2q]_q$ code exists and its dual is an $[2q+2,2q,2]_q$ NMDS code.

	If $\G$ is an $[k+3,k,3]_q$ NMDS code, then $\G^{\perp}$ is an $[k+3,3,k]_q$ NMDS code, whence $k+3\le 2q+1$, so $k\le 2q-2$. From the discussion in Section \ref{sec: trivial codes}, the dual code of a $[k+2,2,k]_q$ code, $1\le k\le 2q$, is an $ [k+2,k,2]_q$ NMDS code. We thus have the following:

	\begin{lemma} \label{lem: NMDS special k}
		If $3\le k \le 2q $ then $m'(k,q)\ge k+2$ with equality if $q>3$ and $k = 2q-1,2q $.
	\end{lemma}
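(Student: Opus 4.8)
The plan is to treat the inequality and the equality claim separately. For the lower bound $m'(k,q)\ge k+2$, I would simply point to the construction recorded immediately before the statement: by part~\ref{part 1 lem: trivial bounds} of Lemma~\ref{lem: trivial bounds} the dual of the $2$-dimensional AMDS code $[k+2,2,k]_q$ is an AMDS code $[k+2,k,2]_q$, and since the original code is itself AMDS, this dual is NMDS. As such a code exists for every $1\le k\le 2q$, we obtain $m'(k,q)\ge k+2$ throughout $3\le k\le 2q$ (this is also exactly part~\ref{part 4 lem: trivial bounds} of Lemma~\ref{lem: trivial bounds}).

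For the equality at $k\in\{2q-1,2q\}$ with $q>3$, the task reduces to establishing the matching upper bound $m'(k,q)\le k+2$, and here I would argue by contradiction through the dual code. Suppose $\G$ were an NMDS $[n,k,d]_q$ code with $n\ge k+3$. Being NMDS forces $d=n-k\ge 3$ and $d^\perp=k$, so $\G^\perp$ is a non-degenerate NMDS code of dimension $k^\perp=n-k\ge 3$ and length $n$. By the very definition of $m'$ this gives the self-referential inequality $n\le m'(n-k,q)$.

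The decisive step is then to combine this with the monotonicity of $m'$ and the small-dimensional bound. Applying part~\ref{part: thm Dodunekov P3} of Theorem~\ref{thm: Dodunekov 2.7} with $\alpha=n-k-3$ yields $m'(n-k,q)\le m'(3,q)+(n-k-3)$, while for $q>3$ part~\ref{part 2 cor: from Barlotti} of Corollary~\ref{cor: from Barlotti} (equivalently Proposition~\ref{prop: AMDS de Boer}) gives $m'(3,q)\le m^1(3,q)\le 2q+1$. Chaining these,
\[
n\le m'(n-k,q)\le (2q+1)+(n-k-3)=n+(2q-2-k),
\]
which forces $k\le 2q-2$. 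Hence no NMDS code of dimension $k\ge 2q-1$ can exceed length $k+2$, so $m'(k,q)\le k+2$ for $q>3$ and $k\ge 2q-1$; together with the lower bound this pins down $m'(2q-1,q)=2q+1$ and $m'(2q,q)=2q+2$.

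I expect the main obstacle to be the bookkeeping around the dual: one must confirm that $\G^\perp$ genuinely has parameters $[n,n-k,k]_q$ with $s=t=1$ and is non-degenerate (which holds because $d=n-k\ge 3>1$), so that the inequality $n\le m'(n-k,q)$ is legitimate rather than circular. A secondary point deserving attention is that the estimate $m'(3,q)\le 2q+1$ needs $q>3$, since Corollary~\ref{cor: from Barlotti} requires $0<s<q-2$ with $s=1$; this is precisely the source of the hypothesis $q>3$ in the equality assertion.
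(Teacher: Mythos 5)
Your proposal is correct and follows essentially the same route as the paper: the lower bound comes from the dual of the $[k+2,2,k]_q$ code (Lemma \ref{lem: trivial bounds}), and the upper bound comes from dualizing a putative longer NMDS code and applying the $s=1$ case of part \ref{part 2 cor: from Barlotti} of Corollary \ref{cor: from Barlotti} to force $k\le 2q-2$. If anything, your version is slightly more careful than the paper's text, which only explicitly rules out NMDS codes of length exactly $k+3$, whereas your monotonicity step (Theorem \ref{thm: Dodunekov 2.7}, part \ref{part: thm Dodunekov P3}) disposes of every length $n\ge k+3$ at once.
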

	\begin{remark}
		cf.\ parts \ref{part: thm Dodunekov P5} and \ref{part: thm Dodunekov P6} of Theorem \ref{thm: Dodunekov 2.7}.
	\end{remark}

	With reference to Theorem \ref{thm: ub ASMDS} we obtain the following.

	\begin{corollary}\label{cor: NMDS general bound}
		Let  $\G$ be an $[n,k,d]_q$ N$^s$MDS code.
		\begin{enumerate}
			\item If $s=1$ then  $n\le 2q +k $, and if $n\ge k+3$ then $k\le 2q-2$.
			\item If $s>1$ then  $n\le \min\{s(q+1)+k-1, s(q+1)+k^\perp-1\}$, and $k,k^\perp \le s(q+1)-1$.
		\end{enumerate}
	\end{corollary}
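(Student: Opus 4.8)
The key observation driving the whole argument is that, by definition, an N$^s$MDS code $\G$ satisfies $S(\G)=S(\G^\perp)=s$, which in the language of Theorem \ref{thm: ub ASMDS} is exactly the specialization $t=s$. Furthermore $\G^\perp$ is itself N$^s$MDS, merely with the roles of $k$ and $k^\perp$ (and of the two defect parameters) interchanged, so any inequality we extract for $\G$ in terms of $k$ transfers verbatim to $\G^\perp$ in terms of $k^\perp$. The plan is therefore to substitute $t=s$ into the relevant parts of Theorem \ref{thm: ub ASMDS} and then invoke this self-duality to produce the symmetric bounds. First I would dispose of the degenerate endpoints: if $d=1$ then $n=k+s$, and if $d^\perp=1$ then $n=k^\perp+s$, and in either case all of the asserted inequalities hold trivially for $q\ge 1$; so thereafter I may assume $d,d^\perp>1$ and freely apply Theorem \ref{thm: ub ASMDS}.

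For part (1) I set $s=t=1$. The bound $k^\perp\le 2q$ follows from part \ref{part: 8 thm ub ASMDS} of Theorem \ref{thm: ub ASMDS} (equivalently part \ref{part: 3 thm ub ASMDS}) applied to $\G^\perp$, whose dual $\G$ again has defect $1$; since $n=k+k^\perp$, this gives $n\le 2q+k$. For the second assertion, $s=1$ forces $d=n-k$, so the hypothesis $n\ge k+3$ is the same as $n>k+2$. Applying part \ref{part: 4 thm ub ASMDS}, which for $q>3$ yields $n\le 2q+k^\perp-2$ once $k^\perp\ge 3$, and rearranging $k+k^\perp\le 2q+k^\perp-2$ gives $k\le 2q-2$.

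For part (2) I set $s=t>1$. Part \ref{part: thm: ub ASMDS 1a } applies because $t=s>1$, giving $n\le s(q+1)+k-1$; applying the same part to $\G^\perp$ (whose dual $\G$ likewise has defect $s>1$) yields $n\le s(q+1)+k^\perp-1$, and taking the minimum gives the first inequality. For the dimension bounds, part \ref{part: thm: ub ASMDS 1b } applies because $s>1$, giving $k\le t(q+1)-1=s(q+1)-1$, and applying it to $\G^\perp$ gives $k^\perp\le s(q+1)-1$.

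I expect the principal difficulty to be bookkeeping rather than mathematics: one must keep straight which of the two defect parameters in Theorem \ref{thm: ub ASMDS} plays the role of $s$ and which plays the role of $t$ when the theorem is applied to $\G$ as opposed to $\G^\perp$, since for an N$^s$MDS code these coincide and it is easy to conflate the $k$-bounds with the $k^\perp$-bounds. The secondary obstacle is checking that the boundary hypotheses attached to the cited parts ($q>3$ and $k\ge 3$ in part \ref{part: 4 thm ub ASMDS}, and $d>1$ throughout) do not actually restrict the corollary; in particular, when $k\le 2$ the inequality $k\le 2q-2$ is immediate for $q\ge 2$, and the small fields $q\le 3$ are either handled directly or rendered vacuous by the standing hypothesis $n\ge k+3$.
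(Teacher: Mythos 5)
Your overall route is exactly the paper's: the paper gives no separate argument for this corollary beyond the pointer ``with reference to Theorem \ref{thm: ub ASMDS},'' and your specialization $t=s$, combined with applying each cited part to both $\G$ and $\G^\perp$ via the self-duality of the N$^s$MDS property, is precisely what is intended. Your applications for part (2) (parts \ref{part: thm: ub ASMDS 1a } and \ref{part: thm: ub ASMDS 1b } of Theorem \ref{thm: ub ASMDS}, applied to both codes) and for the first assertion of part (1) (part \ref{part: 8 thm ub ASMDS}, equivalently \ref{part: 3 thm ub ASMDS}, applied to $\G^\perp$) are correct.

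There is, however, a genuine gap in your treatment of the second assertion of part (1), and it cannot be repaired: part \ref{part: 4 thm ub ASMDS} of Theorem \ref{thm: ub ASMDS} carries the hypothesis $q>3$, and your claim that ``the small fields $q\le 3$ are either handled directly or rendered vacuous by the standing hypothesis $n\ge k+3$'' is false. The binary extended Hamming code is an $[8,4,4]_2$ NMDS code (the paper itself invokes it just before Corollary \ref{cor: q-1 and q-2}) with $n=8\ge k+3=7$ and $k=4>2q-2=2$; likewise the extended ternary Golay code is a $[12,6,6]_3$ NMDS code with $n=12\ge k+3=9$ and $k=6>2q-2=4$. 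So the implication ``$n\ge k+3$ implies $k\le 2q-2$'' genuinely requires $q>3$; this hypothesis is silently inherited from part \ref{part: 4 thm ub ASMDS}, is missing from the corollary as printed, and no proof---yours or the paper's implicit one---can avoid it. You should state that part (1)'s second claim holds only for $q>3$ rather than assert the small fields can be absorbed. A second, smaller gap: in disposing of the endpoint $d=1$ in part (2), the bound $k\le s(q+1)-1$ (equivalently $n\le s(q+1)+k^\perp-1$) is not trivial. With $d=1$ one has $n=k+s$ and $k^\perp=s$, and bounding $k$ requires an actual argument, e.g.\ applying Lemma \ref{lem: long bound} to the non-degenerate code obtained by deleting the zero coordinates of the (degenerate) dual $\G^\perp$; the inequality does hold, but ``trivially'' overstates it. Alternatively, you may note that the paper's standing convention (see the remark preceding Theorem \ref{thm: ub ASMDS} and the caveat $s<n-k$ in Table \ref{table: AstMDS}) is to dismiss $d=1$ altogether, in which case this endpoint need not be treated at all.
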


	The dual of an MDS code is also MDS, and if $\G$ is an $[n,k,n-k]_q$ AMDS code with $n>q+k$  then (Theorem \ref{thm: ub ASMDS}, part \ref{part: 1 thm ub ASMDS}) $\G^\perp$ is also AMDS, so $\G$ is NMDS.  For $s>1$, Liao and Liao (\cite{Liao2014}, Theorem 3.6) provide lower bounds on $n$ and $k$ sufficient for an $[n,k,d]_q$ \AS code to be N$^s$MDS, so too does Tong (\cite{Tong2012}, Theorem 4).  However, in both works the authors require $n>s(q+1)+k-1$, so according to part \ref{part: 1 thm ub ASMDS} of  Theorem \ref{thm: ub ASMDS} their results hold vacuously. This observation identifies a gap in the existing literature, which we address by establishing feasible bounds within which such codes demonstrably exist. Towards this end, we first provide the following lemma.

	\begin{lemma}\label{lem: conditions for NsMDS}
		Let $\G$ be an $[n,k,d]_q$ \AS code, $s,d>1$. If $k\ge (s-1)(q+1)$, and every $(k-s)$-subset of $\G$ is independent, then $\G$ is N$^s$MDS.
	\end{lemma}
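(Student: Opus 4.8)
The plan is to show that $S(\G^\perp)=s$; writing $t=S(\G^\perp)$ and recalling that $\G$ is A$^s$MDS by hypothesis, this is precisely the assertion that $\G$ is N$^s$MDS. Since $k^\perp=n-k$ forces $t=n-k^\perp+1-d^\perp=k+1-d^\perp$, i.e. $d^\perp=k+1-t$, it suffices to control $t$, and I would do so by squeezing it from both sides: the affine-independence hypothesis will give $t\le s$, while the dimension hypothesis $k\ge(s-1)(q+1)$ will give $t\ge s$.

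First I would establish $t\le s$ from the affine-independence hypothesis via Equation~(\ref{eqn: Proj Sys dual distance}). That equation characterises $d^\perp$ as the minimum cardinality of a subset $\mathcal{Q}\subseteq\G$ with $|\mathcal{Q}|-\dim\operatorname{lin}\langle\mathcal{Q}\rangle=1$, that is, of a minimal linearly dependent subset. Since every $(k-s)$-subset of $\G$ is affine-independent, every subset of size at most $k-s$ is linearly independent and hence has corank $0$; consequently no admissible $\mathcal{Q}$ can have size at most $k-s$, so $d^\perp\ge k-s+1$. Substituting $d^\perp=k+1-t$ yields $t\le s$.

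Next I would obtain the reverse inequality $t\ge s$ from the dimension hypothesis. Since $s>1$ the code $\G$ is not MDS, hence $\G^\perp$ is not MDS either and $t\ge 1$; together with $d>1$ this places us within the hypotheses of Theorem~\ref{thm: ub ASMDS}. Part~\ref{part: 1 thm ub ASMDS} of that theorem (the case $s>1$) then gives $k\le t(q+1)-1$. Combining this with the hypothesis $k\ge(s-1)(q+1)$ produces $(s-1)(q+1)\le t(q+1)-1<t(q+1)$, whence $s-1<t$, i.e. $t\ge s$. Together with $t\le s$ this forces $t=s$, so $\G^\perp$ is A$^s$MDS and $\G$ is N$^s$MDS.

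I expect the only genuinely delicate point to be the first step: correctly reading Equation~(\ref{eqn: Proj Sys dual distance}) so that the affine-independence hypothesis translates into the lower bound $d^\perp\ge k-s+1$, equivalently $t\le s$. The remainder is a clean numerical squeeze, with the hypothesis $k\ge(s-1)(q+1)$ engineered precisely to clash with $k\le t(q+1)-1$ unless $t\ge s$; the role of the standing assumptions $s>1$ and $d>1$ is merely to license the application of Theorem~\ref{thm: ub ASMDS}.
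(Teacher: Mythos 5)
Your proposal is correct and takes essentially the same route as the paper's proof: both squeeze $t=S(\G^\perp)$ between the two bounds, getting $t\le s$ from the affine-independence hypothesis (via $d^\perp\ge k-s+1$) and $t\ge s$ from part \ref{part: 1 thm ub ASMDS} of Theorem \ref{thm: ub ASMDS} combined with $k\ge (s-1)(q+1)$. The only (harmless) difference is that the paper first invokes Corollary \ref{cor: bound s = 1 t>1} to establish $t>1$ before applying the theorem, whereas you license the theorem's hypotheses with the weaker but sufficient observation that $t\ge 1$ follows from MDS duality.
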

	\begin{proof}
		Suppose the conditions hold and that $\G^\perp$ is an $[n,k^\perp,d^\perp]_q$ A$^t$MDS code.  The conditions provide $n\ge k^\perp + (s-1)(q+1)>k^\perp+s$, so (Corollary \ref{cor: bound s = 1 t>1}), $t>1$. As such, the condition $k\ge (s-1)(q+1)$ gives $t\ge s$ (Theorem \ref{thm: ub ASMDS}, part \ref{part: 1 thm ub ASMDS}). Since every $(k-s)$-subset of $\G$ is independent, it must be the case that $d^\perp (=k+1-t) \ge k-s+1$, whence  $t\le s$.
	\end{proof}

	The previous Lemma provides some easily obtained examples of N$^2$MDS binary codes.
	\begin{example}
		Let $\G$ be a binary $[n,k,d]_2$ A$^2$MDS code, where $\G^\perp$ is an \AT code.
		\begin{enumerate}
			\item If $k=3$, then (since $\G$ is assumed to be non-degenerate), $\G$ is N$^2$MDS.
			\item If $k=4$, and $\G$ is projective, then $\G$ is N$^2$MDS.
			\item If $k=5$, and $\G$ is projective and contains no line, then $\G$ is N$^2$MDS.
		\end{enumerate}
	\end{example}

	We now present some bounds providing sufficiency conditions under which an A$^s$MDS code is necessarily an N$^s$MDS code.
	\begin{theorem}\label{thm: NMDS}
		Let $\G$ be an $[n,k,d]_q$ \AS code, $1< s < q-1$, $(s+1,q) \ne (2^e,2^h)$. If $k>(s-1)(q+1)-1$ and $n>s(q+1)+k-3$ then $\G^\perp$ is an \AS code, so $\G$ is an N$^s$MDS code.
	\end{theorem}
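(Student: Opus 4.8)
The plan is to reduce the entire statement to a single geometric claim: \emph{under the stated hypotheses, every $(k-s)$-subset of $\G$ is affine independent}. Once that is established, the conclusion is immediate from Lemma~\ref{lem: conditions for NsMDS}, since the hypothesis $k>(s-1)(q+1)-1$ is exactly $k\ge(s-1)(q+1)$, and $s>1$ is given. Before turning to the claim I would record two preliminary facts. First, $1<s<q-1$ forces $q>s+1\ge 3$, so $q\ge 4$. Second, $d>1$: if $d=1$ then $\G$ is the trivial $[k+s,k,1]_q$ code with $n=k+s$, and $n>s(q+1)+k-3$ would give $sq<3$, impossible for $s\ge 2$, $q\ge 4$. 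Thus the standing hypotheses ($s,d>1$) of Lemma~\ref{lem: conditions for NsMDS} are met.

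For the claim I would argue by contradiction. Suppose some $(k-s)$-subset of $\G$ is affine dependent. By Eqn.~(\ref{eqn: Proj Sys dual distance}) a dependent set of size $\le k-s$ means $d^\perp\le k-s$; padding such a minimal dependency to a $(k-s)$-subset (which stays dependent, as its span gains at most one dimension per adjoined point) yields a $(k-s-2)$-flat $\Lambda$ with $|\Lambda\cap\G|\ge k-s$. Because $s\ge 2$ we have $\dim\Lambda=k-s-2\le k-4$, so I would enlarge $\Lambda$ to a $(k-4)$-flat $\Omega$ by adjoining, one at a time, points of $\G$ lying outside the current flat; this is always possible since $\G$ spans $\text{PG}(k-1,q)$ and each intermediate flat is proper. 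Each of the $s-2$ steps raises the dimension by exactly $1$ and the point count by at least $1$, so $\alpha:=|\Omega\cap\G|\ge (k-s)+(s-2)=k-2$.

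Next I would pass to the quotient geometry at $\Omega$. As $\Omega$ has codimension $3$, Proposition~\ref{prop: quotient properties} produces a non-degenerate planar code $\G^*$ (it spans the quotient $\text{PG}(2,q)$ because $\G$ spans $\text{PG}(k-1,q)$), an A$^{s^*}$MDS code of length $n^*=n-\alpha$, with
\[
s^*\le s-\alpha+(k-4)+1=s-\alpha+k-3\le s-1,
\]
the last step using $\alpha\ge k-2$. Since $m^{\,\cdot}(3,q)$ strictly increases with the defect (Lemma~\ref{lem: bounds on length of AsMDS}(\ref{part: 1 bounds on length of AsMDS})),
\[
m^{s^*}(3,q)\le m^{s-1}(3,q)-\big((s-1)-s^*\big)\le m^{s-1}(3,q)-(\alpha-k+2),
\]
while the second part of Lemma~\ref{lem: Barlotti 3d}, applicable precisely because $0<s-1<q-2$ and $(s+1,q)\ne(2^e,2^h)$, gives $m^{s-1}(3,q)\le s(q+1)-1$. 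Combining,
\[
n=n^*+\alpha\le m^{s^*}(3,q)+\alpha\le m^{s-1}(3,q)+(k-2)\le s(q+1)+k-3,
\]
contradicting $n>s(q+1)+k-3$. Hence every $(k-s)$-subset of $\G$ is affine independent, and Lemma~\ref{lem: conditions for NsMDS} yields that $\G$ is N$^s$MDS, i.e. $\G^\perp$ is A$^s$MDS.

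I expect the main obstacle to be the defect bookkeeping in the quotient step: one must check that the surplus $\alpha-(k-2)$ of extra points of $\G$ absorbed into $\Omega$ is \emph{exactly} compensated by the corresponding drop in $s^*$, so that the excess in the ``$+\alpha$'' term cancels against the monotone decrease of $m^{s^*}(3,q)$ and leaves the clean bound $m^{s-1}(3,q)+(k-2)$. It is equally essential that the relevant planar defect is $s^*\le s-1$, for then $s-1$ is the value whose Barlotti exclusion is precisely the hypothesis $(s+1,q)\ne(2^e,2^h)$; aligning these two conditions is the delicate point. The remaining details---non-degeneracy and spanning of the quotient system, and the existence of points of $\G$ outside each intermediate flat---are routine consequences of $\G$ spanning $\text{PG}(k-1,q)$.
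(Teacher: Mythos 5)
Your proof is correct, and it runs on the same engine as the paper's: both arguments ultimately rule out $t\ne s$ by quotienting at a flat spanned by an affine-dependent subset of $\G$ and then invoking the Barlotti-type planar bound at defect $s-1$ (which is exactly where the hypotheses $1<s<q-1$ and $(s+1,q)\ne(2^e,2^h)$ enter), and both obtain $t\ge s$ from part \ref{part: thm: ub ASMDS 1b } of Theorem \ref{thm: ub ASMDS} (in your case this happens inside Lemma \ref{lem: conditions for NsMDS}). The packaging, however, is genuinely different, and in an interesting way. The paper establishes $t>1$ and $t\ge s$ directly, then supposes $t>s$, quotients at a $(d^\perp-3)$-flat spanned by $d^\perp-2$ points of a minimal dependent set to produce a \emph{non-projective} code of dimension $k-d^\perp+2$, and concludes by chaining the non-projectivity bound (Theorem \ref{thm-Projective}, part \ref{part: 3 thm proj}) with Corollary \ref{cor: from Barlotti}, part \ref{part 2 cor: from Barlotti}; the reduction to the plane is thus spread across several previously proved general-purpose results. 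You instead make ``every $(k-s)$-subset of $\G$ is affine independent'' the central claim, pad the span of a hypothetical dependent set up to a $(k-4)$-flat, perform a single quotient directly to $\mathrm{PG}(2,q)$, and balance the surplus points against the strict monotonicity of $m^{\cdot}(3,q)$ (Lemma \ref{lem: bounds on length of AsMDS}, part \ref{part: 1 bounds on length of AsMDS}) before applying Lemma \ref{lem: Barlotti 3d}, part 2 — your bookkeeping $n\le m^{s-1}(3,q)+k-2\le s(q+1)+k-3$ is verifiably tight and correct. Notably, you then finish via Lemma \ref{lem: conditions for NsMDS}, which the paper states as a lead-in to this theorem but never actually cites in its own proof; your argument therefore gives that lemma a genuine application and makes the numerical cancellation explicit in one step, whereas the paper's version is shorter on the page because it reuses its stacked machinery.
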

	\begin{proof}
		By assumption, $n>q+k^\perp$ and $s>1$, so applying Corollary \ref{cor: bound s = 1 t>1} to $\G^\perp$, $t>1$. Since $n>s(q+1)+k-3$ we also have $n>k+s$, so $d>1$. Thus, by part \ref{part: 1 thm ub ASMDS}  of Theorem \ref{thm: ub ASMDS}, the condition $k>(s-1)(q+1)-1$ gives $t\ge s$.  There exists a $(d^\perp-2)$-flat $\Lambda$ with $|\Lambda\cap \G|\ge d^\perp$. Let $\Omega\subseteq \Lambda$ be an $(d^\perp-3)$-flat with $|\Omega\cap \G|=d^\perp-2$. Taking the quotient at $\Omega$ provides a non-projective $[n-d^\perp+2, k-d^\perp+2,d']_q$ A$^{s'}$MDS code, where $d'\ge d$, and therefore $s'\le s$. Note that  $\G'$ is not projective since the point corresponding to $\Lambda$ has multiplicity at least $2$. Observe that if $t>s$, then $d^\perp =k+1-t<k+1-s$, so $k-d^\perp \ge s\ge 2$, and we obtain by  part \ref{part: 3 thm proj} of Theorem \ref{thm-Projective}
		\begin{align}
			n-d^\perp+2 & \le m^{s'-1}(k-d^\perp+1,q)+2  \label{eqn: NsMDS 1}\\
			& \le m^{s-1}(k-d^\perp+1,q)+2\label{eqn: NsMDS 2}\\
			&\le  s(q+1)+k-d^\perp -1,\label{eqn: NsMDS 3}
		\end{align}
		where (\ref{eqn: NsMDS 1}) follows from part \ref{part: 3 thm proj} of Theorem \ref{thm-Projective} (if $n > m^{s'-1}(k-1,q)+2$ then $\G$ is projective),  (\ref{eqn: NsMDS 2}) follows from part \ref{part: 1 bounds on length of AsMDS} of Lemma \ref{lem: bounds on length of AsMDS} (which gives $m^{s'-1}(k,q) \le m^{s-1}(k,q)$ since $s'\le s$), and  (\ref{eqn: NsMDS 3}) follows from  part  \ref{part 2 cor: from Barlotti} of Corollary \ref{cor: from Barlotti} (which gives $m^{s-1}(k,q)\le s(q+1)+k-4$ when $(s+1,q)\ne(2^e,2^h)$). Since this contradicts $n> s(q+1)+k-3$, it must be the case that $t=s$.
	\end{proof}

	We note that there are codes meeting the condition of the previous theorem, for example, both $[13,5,7]_4$, and $[14,5,8]_4$  A$^2$MDS codes exist (see e.g. \cite{codetables.de}).

	\section{Conclusion, questions, and conjectures}

	Here, we have explored bounds on the lengths of codes with non-zero Singleton defects within the framework of projective systems.  We introduced the parameters $m^{s}(k,q)$, denoting the maximum length of a (non-degenerate) $[n,k,d]_q$ A$^s$MDS code, and  $m^{s}_t(k,q)$ denoting the maximum length of a (non-degenerate) $[n,k,d]_q$ A$^s$MDS code $\G$ such that  $\G^\perp$ is an  A$^t$MDS code. Tables \ref{table: AsMDS}, and \ref{table: AstMDS}  may help place our results in context.

	\begin{table}[!htbp]
		\centering
		\caption{Summary of Bounds on $m^{s}(k,q)$, $k\ge 3$}  \label{table: AsMDS}
		\begin{tabular}{ | p{0.45\linewidth}| c |c  | }
			\hline
			Conditions & $m^{s}(k,q)$ & Ref. \\
			\hline
			$s=q-1$; $q>2$, and $2\leq k \leq 3$, or $q=2$ and $2\leq k \leq 4$ &  \multirow{3}{*}{$= (s+1)(q+1)+k-2$} & \multirow{3}{*}{Cor. \ref{cor: q-1 and q-2}}\\
			\hhline{|-~|~|}
			\multirow{1}{*}{$s=q-2$; $2\leq k\leq 4$ and $q>2$} &  & \\
			\hline
			$k\geq 3$ & $\leq (s+1)(q+1)+k-2$ & Cor. \ref{cor: from Barlotti} \\
			\hline
			prime $p$ divides $\prod_{i=1}^{s} (q(s+1)+i)$, \linebreak $s<p<k+s$ & \multirow{2}{*}{$\leq (s+1)(q+1)+k-3$} &\multirow{6}{*}{ Cor. \ref{cor: prime divisors}}\\
			\hhline{|-|-|~|}
			prime $p$ divides $\prod^{s}_{i=1}(q(s+1)+i)$, \linebreak  $s<p<k+s-1$ & \multirow{8}{*}{$\leq (s+1)(q+1)+k-4$} & \\
			\hhline{|-|~|}
			prime $p$ divides $\prod^{s}_{i=0}(q(s+1)+i)$,  $\gcd(p,q)=1$,  $s+1<p<k+s-1$ & & \\
			\hhline{|-|~|-|}
			prime $p$ divides $\prod_{i=1}^{s} (q(s+1)+i)$, \linebreak  $s<p<k+s$, $0<s<q-2$, $(s+2)\mid q$ & & \multirow{2}{*}{Cor. \ref{cor: cor prime divisors}} \\
			\hhline{|-|~|-|}
			$k\geq 3$, $0<s< q-2$, and $(s+2,q) \neq (2^e,2^h)$ & & \multirow{2}{*}{Cor. \ref{cor: from Barlotti}}\\
			\hhline{|-|~|~|}
			$q>3$, and $s=1$ &  & \\
			\hline
			$q$ is prime, and $s+2\le (q+3)/2$ & \multirow{2}{*}{$\leq q(s+1)+k-2$} & \multirow{2}{*}{Cor. \ref{cor: from Barlotti}}\\
			\hhline{|-|~|}
			$q$ odd, $s+2$ divides $q$, $s<\frac14\sqrt{q}-2$ &  & \\
			\hline
			$s>1$ and $k>q$ & \multirow{8}{*}{$\leq s(q+1)+k-1$} & Cor. \ref{cor: AMDS k bound}\\
			\hhline{|-|~|-|}
			$s\ge q$, and $k\ge 3$ & & Cor. \ref{cor: big s short code} \\
			\hhline{|-|~|-|}
			$m(k-1,q)=q+1$, and $s>q+2-k$ & & \multirow{3}{*}{Cor. \ref{cor: MC big s short code}} \\
			\hhline{|-|~|}
			$m(k-1,q)=q+1$, $s>q+2-k$, $q$ is prime, and $3\le k \le q$ & & \\
			\hhline{|-|~|-|}
			$k\geq 3$, and $d>q^{2}$ & & \multirow{1}{*}{Cor. \ref{cor: big d means short}} \\
			\hhline{|-~|-|}
			$s=q-1$; $q>2$, and $k\geq 4$ & &  \multirow{2}{*}{Cor. \ref{cor: q-1 and q-2}}\\
			\hhline{|-~|~|}
			$s=q-2$; $k\ge 5$ and $q>3$ & & \\
			\hline
			$s\ge a (q+1)-1$, and $k\ge 3$ & \multirow{2}{*}{$\leq (s+1-a)(q+1)+ k-2+a$} & \multirow{1}{*}{Cor. \ref{cor: big s short code}} \\
			\hhline{|-|~|-|}
			$k\geq 3$, and $d>a (q^2+q)-q$, $a\geq 0$ &  & Cor. \ref{cor: big d means short} \\
			\hline
			$s=q-1$; $q=2$, and $k\geq 5$ & $ \leq k+2$ & Cor. \ref{cor: q-1 and q-2}\\
			\hline
		\end{tabular}
	\end{table}

	\begin{table}[!htbp]
		\centering
		\caption{Bounds on $m_{t}^{s}(k,q)$, $k\ge 3$, $s<n-k$} \label{table: AstMDS}
		\begin{tabular}{| p{0.45\linewidth}| c |c  | }
			\hline
			Conditions & $m_{t}^{s}(k,q)$ & Ref. \\
			\hline
			$s>1$ & $\leq t(q+1)+k^{\perp}-1$ & \multirow{7}{*}{Thm. \ref{thm: ub ASMDS}}\\
			\hhline{|--|~|}
			$t>1$ & $\leq s(q+1)+k-1$ &  \\
			\hhline{|--|~|}
			$s,t>1$ & $\leq (s+t)(q+1)-2$ &  \\
			\hhline{|--|~|}
			$t=1$ & $\leq m(k-1,q)+k^{\perp}-s+1$ & \\
			\hhline{|--|~|}
			\multirow{2}{*}{$s=1$} & $\leq (t+1)(q+1)+k^{\perp}-2$ & \\
			\hhline{|~|-|} & $\leq m(k^{\perp}-1,q)+k-t+1$ & \\
			\hhline{|-|-|~|}
			\multirow{2}{*}{$s=t=1$, $q>3$} & $\leq 2q+k-2$ &  \\
			\hhline{|~|--|}
			& $\leq (s+3)(q+1)-6$ & \multirow{2}{*}{Cor. \ref{cor: AMDS no k}} \\
			\hhline{|--|~|}
			$s>1$, $t=1$, $q>3$ & $\leq (s+2)(q+1)-3$ & \\
			\hline
			$s>1$ and $t=1$ & $\leq q+k^{\perp}$ & \multirow{1}{*}{Cor. \ref{cor: AMDS k bound}} \\
			\hhline{|--|-|}
			$t>1$, and $d>a (q^2+q)-2q$, $a\ge 0$ & $\leq (s+1-a)(q+1)+k-2+a$ & Cor. \ref{cor: big d means short t>1} \\
			\hline
			$t>1$ and $s=1$ & $\leq m(t,q)+k-t+1$ & Cor. \ref{cor: bound s = 1 t>1} \\
			\hline
			$t>2$ and $s>0$ & $\leq m^{s-1}(3,q)+k-2$ & Cor. \ref{cor: bound s = 1 t>1} \\
			\hline
			$t>1$, $s=1$, and $q$ is an odd prime & $\leq q+k-(t-2)$ & Rem. \ref{rem: MDS Conj and AMDS t>1}\\
			\hline
			$t>1$, and $s>0$ & $\leq m^{s-1}(t,q)+k-t+1$ & Lem. \ref{lem: bounds based on d perp} \\
			\hline
			$s\in \{0,1\}$, and $k\geq (t+1)(q+1)-1$ & $\leq k+1$ &  \multirow{2}{*}{Cor. \ref{cor: big k, k+s}} \\
			\hhline{|--|~|}
			$s>1$, and $k\geq t(q+1)$ & $\leq k+s$ & \\
			\hline
			$s=t=1$, $q>3$,  and $k=2q-1, 2q$ & $\leq k+2$ &   {Lem. \ref{lem: NMDS special k}} \\
			\hline
			$t=s=1$ & $\leq 2q+k$ &  {Cor.\ref{cor: NMDS general bound}} \\
			\hline
		\end{tabular}
	\end{table}

	Our results provide new insights that both extend and refine existing knowledge in the literature, with the bounds obtained through these methods often subsuming those previously established.

	The findings presented highlight the effectiveness of interpreting linear codes as projective systems. This perspective not only allows us to derive tighter bounds on code lengths but also to gain insight into the structural properties of these codes. For example, we easily see that for fixed Singleton defect, dimension, and field, codes of reasonable length must meet the Griesmer bound, must be projective, and must be dual to an AMDS code.

	The results and methods presented here have potential implications for the design and analysis of error-correcting codes, particularly in scenarios where maximizing code length is critical.  Based on the findings and observations, we propose the following questions and conjectures for further investigation.

	\newpage
	\noindent \textbf{Question 1:}  Codes meeting the hypothesis of Theorem \ref{thm: NMDS} form an extremely restricted family (especially in view of Corollary \ref{cor: bound s = 1 t>1}).  Can the bounds be widened, or otherwise relaxed to include more codes?

	\noindent \textbf{Question 2:} For which values of $k,s,$ and $q$ is the inequality in part \ref{part: 2 bounds on length of AsMDS} of Lemma \ref{lem: bounds on length of AsMDS} strict? For example, when $s=0$ and $k = 3$, the inequality is strict only if $q$ is odd.

	The dual of the perfect ternary Golay code is a projective two-weight $[11,5,6]_3$ code underlying the Berlekamp–van Lint–Seidel strongly regular graph $\mathrm{SRG}(243,22,1,2)$.
	The $[11,5,6]_3$ code dual to the perfect ternary Golay code is a length-maximal two-weight code with weights $6$ and $9$. The self-dual extended ternary Golay code $[12,6,6]_3$ is a three-weight length-maximal code, with weights $6$, $9$ and $12$. We suspect that, up to equivalence, these are the only examples of linear length maximal codes in dimensions $k>4$.

	\begin{conjecture} (Weak $\kappa(s,q)$ conjecture)
		Linear length-maximal codes of dimension $5$ or more do not exist if $q>3$. That is:  $\kappa(s,q)\le 4$ for $q>3 $.
	\end{conjecture}

	\begin{conjecture} (Strong $\kappa(s,q)$ conjecture)
		For $q>3$, linear length-maximal codes of dimension $5$ or more do not exist, moreover
		\begin{equation*}
			\kappa(s,q) = \left\{ \begin{array}{cl}
				2	& \text{ if $s\ne q-2$, and either $q$ is odd, or if $q$ is even and } (s+2)\nmid q;  \\
				3	&  \text{ if $s\ne q-2$, $q$ even, and $(s+2)\mid q$ },\text{ or } s=0\text{ and } q=2;   \\
				4	& \text{ if } s=q-2,\; q>3. 
			\end{array} \right.
		\end{equation*}

	\end{conjecture}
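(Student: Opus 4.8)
The parameter $\kappa(s,q)$ is the largest dimension $k$ admitting a \emph{length-maximal} \AS code, i.e.\ one attaining $n=(s+1)(q+1)+k-2$. The plan is to determine $\kappa(s,q)$ in each regime of $(s,q)$ by combining a lower bound (an explicit length-maximal code of the claimed dimension) with a matching upper bound (nonexistence one dimension higher). Setting aside $s=0$, which is the classical situation governed by Lemma~\ref{lem: Bounds on MDS codes} and the theory of ovals and hyperovals, the structural anchor for $s\ge 1$ is that part~\ref{part: 1 thm ub ASMDS} of Theorem~\ref{thm: ub ASMDS} forces $t=1$ on any length-maximal code, whence $d^{\perp}=k$: every $k-1$ or fewer of its points are affine independent, and each secant hyperplane meets $\G$ in a $(k+s-1,k-2)$-arc of $\mathrm{PG}(k-2,q)$. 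Thus the existence question in a given dimension is governed by the existence of arcs with these parameters, and the shortening machinery of Proposition~\ref{prop: quotient properties} should let one descend to the planar case $k=3$.

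First I would dispose of the regimes already closed by the established bounds. For $0<s<q-2$ with $q$ odd, or with $q$ even and $(s+2)\nmid q$, part~\ref{part 2 cor: from Barlotti} of Corollary~\ref{cor: from Barlotti} yields $m^{s}(k,q)\le(s+1)(q+1)+k-4$ for every $k\ge 3$, so no length-maximal code of dimension exceeding $2$ exists and $\kappa(s,q)=2$; the large-defect range $s\ge q$ falls to Corollary~\ref{cor: big s short code} in the same way. The extreme defects $s=q-1$ and $s=q-2$ lie outside these hypotheses and must be treated separately: they are exactly the content of Corollary~\ref{cor: q-1 and q-2}, which pairs explicit constructions (the full plane, Denniston arcs, and the $(q^{2}+1)$-cap of $\mathrm{PG}(3,q)$) with cap bounds in $\mathrm{PG}(3,q)$ and the MDS bound $m(4,q)=q+1$ of Lemma~\ref{lem: Bounds on MDS codes}.

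This isolates the one genuinely resistant regime: $q$ even, $(s+2)\mid q$, and $0<s<q-2$. Here Denniston maximal arcs (Lemma~\ref{lem: Barlotti 3d}) already furnish length-maximal codes in dimension $3$, so the lower bound is in hand and the entire difficulty is the upper bound, namely showing that a planar maximal arc cannot be \emph{lifted} to a length-maximal \AS code of dimension $4$. My plan would be to assume such a code $\G$ of dimension $k\ge 4$, shorten it through a suitable $(k-4)$-flat via Proposition~\ref{prop: quotient properties} to reach a length-maximal planar system, and then extract a contradiction from the rigid incidence structure forced by $d^{\perp}=k$. The quantitative form of that structure is the integrality of the hyperplane counts $\gamma_{j}$, $\alpha_{j}$, $\beta_{j}$ of Lemmas~\ref{lem: full length comb} and~\ref{lem: alpha beta}, from which the prime-divisor obstructions of Corollary~\ref{cor: prime divisors} follow.

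The main obstacle is that these divisibility obstructions are not uniform: each prime congruence excludes a large family of pairs $(s,q)$ (this is what the numerical entries of Table~\ref{table: small kappa values} record), yet no single congruence excludes them all, so a purely arithmetic argument cannot close the regime. A complete proof therefore seems to demand either a genuinely geometric nonexistence theorem for ``lifted'' maximal arcs in $\mathrm{PG}(k-1,q)$ with $k\ge 4$, or an induction on $k$ that reduces every dimension to the planar case while preserving length-maximality. I expect the inductive step to be the crux, since the natural quotient controls the defect only loosely and the arc bounds it would invoke are themselves conditional---through part~\ref{part: 5 thm ub ASMDS} of Theorem~\ref{thm: ub ASMDS} the upper bound ultimately leans on $m(k-1,q)$ and hence on the MDS main conjecture. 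Closing this step uniformly is precisely what currently keeps the statement a conjecture rather than a theorem.
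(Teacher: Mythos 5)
This statement is one of the paper's two closing conjectures: the paper offers \emph{no} proof of it, and poses it precisely because the tools developed in the paper do not settle it. Your proposal, read honestly, acknowledges this: it is a map of which regimes the paper's results already decide --- Corollary~\ref{cor: from Barlotti}(\ref{part 2 cor: from Barlotti}) for the generic $\kappa=2$ regime $0<s<q-2$ with $(s+2,q)\ne(2^e,2^h)$, Corollary~\ref{cor: big s short code} for $s\ge q$, Denniston arcs for the lower bound $\kappa\ge 3$ when $q$ is even and $(s+2)\mid q$, and Corollary~\ref{cor: q-1 and q-2} for $s\in\{q-2,q-1\}$ --- together with a correct identification of the crux that remains open: proving $\kappa\le 3$ (no dimension-$4$ length-maximal A$^s$MDS code) when $q$ is even, $(s+2)\mid q$ and $0<s<q-2$. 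Your diagnosis that the divisibility obstructions of Corollary~\ref{cor: prime divisors} exclude only sporadic pairs $(s,q)$ (the content of Table~\ref{table: small kappa values}) and cannot close the regime uniformly matches the paper's own position. So as a proof the proposal has a genuine gap, but it is the gap that makes the statement a conjecture; no blind attempt could have closed it.

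There is, however, one concrete error in your regime analysis worth correcting, because it means the statement you would be trying to prove is false as literally written. You assert that $s=q-1$ and $s=q-2$ are ``exactly the content of Corollary~\ref{cor: q-1 and q-2}'' and treat that corollary as consistent with the conjecture. It is not: for $s=q-1$ one has $s+2=q+1\nmid q$, so the conjecture's first clause predicts $\kappa(q-1,q)=2$, whereas Corollary~\ref{cor: q-1 and q-2} gives $\kappa(q-1,q)=3$ for $q>2$ (the plane $PG(2,q)$ is a length-maximal $[q^2+q+1,3,q^2]_q$ code) and $\kappa(1,2)=4$ (the $[8,4,4]_2$ code) --- exactly as recorded in Table~\ref{table: summary of kappa values}, which thus contradicts the conjecture it accompanies. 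Likewise, for $(s,q)=(0,2)$ the conjecture's second clause asserts $\kappa=3$, yet every binary parity-check code $[k+1,k,2]_2$ is a non-degenerate MDS code of length $(s+1)(q+1)+k-2=k+1$, so $\kappa(0,2)$ is unbounded. Any proof along the lines you propose must therefore begin by amending the statement (e.g.\ restricting the first two clauses to $s<q-1$ and $q>2$, and adding the clause $\kappa(q-1,q)=3$ for $q>2$); a plan that ``verifies'' the conjecture case by case without noticing this would fail at $s=q-1$ against the paper's own theorem.
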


	\providecommand{\bysame}{\leavevmode\hbox to3em{\hrulefill}\thinspace}
	\providecommand{\MR}{\relax\ifhmode\unskip\space\fi MR }
	\providecommand{\MRhref}[2]{%
		\href{http://www.ams.org/mathscinet-getitem?mr=#1}{#2}
	}
	\providecommand{\href}[2]{#2}

\end{document}